\numberwithin{equation}{section}
\newtheorem{theorem}{Theorem}[section]
\newtheorem{corollary}[theorem]{Corollary}
\newtheorem{proposition}[theorem]{Proposition}
\newtheorem{lemma}[theorem]{Lemma}
\theoremstyle{definition}
\newtheorem{definition}[theorem]{Definition}
\newtheorem{remark}[theorem]{Remark}
\title{Torus-covering knot groups and their irreducible metabelian $SU(2)$-representations}
\author{Inasa Nakamura}
\address{Faculty of Electrical, Information and Communication Engineering, \newline
Institute of Science and Engineering, \newline
Kanazawa University, \newline 
Kakumamachi, Kanazawa, 920-1192, Japan}
\email{inasa@se.kanazawa-u.ac.jp}
\address{{\rm (Current address)} Department of Mathematics, Information Science and  Engineering,  
Saga University,  
1 Honjomachi, Saga, 840-8502, Japan}
\email{inasa@cc.saga-u.ac.jp}
\subjclass[2020]{Primary 57K45,57M05,57K12}
\keywords{surface-knot; knot group; metabelian $SU(2)$-representation; knot determinant; $p$-colorability}
\begin{document}  
\begin{abstract}
A torus-covering $T^2$-knot is a surface-knot of genus one determined from a pair of commutative braids. For a torus-covering $T^2$-knot $F$, we determine the number of irreducible metabelian $SU(2)$-representations of the knot group of $F$ in terms of the knot determinant of $F$. 
It is similar to the result due to Lin for the knot group of a classical knot. Further, we investigate the number of irreducible metabelian $SU(2)$-representations using Fox's $p$-colorability. 
\end{abstract}
\maketitle

\section{Introduction}\label{sec1}

Let $\rho$ be a representation of a group $G$ on a vector space. We call $\rho$ a {\it metabelian} representation if $\rho([G, G])$ is abelian, where $[G,G]$ is the commutator subgroup of $G$. 
We call $\rho$ {\it irreducible} if it has no non-trivial invariant subspaces. 

A {\it (classical) link} is the image of a smooth embedding of several circles into the 3-sphere $S^3$. In particular, when it consists of one component, it is called a {\it (classical) knot}. 
The {\it knot group} of a knot is the fundamental group of the knot complement. 
Lin \cite{Lin} studied irreducible metabelian $SU(2)$-representations of the knot group $\pi_1(S^3 \backslash K)$ of a classical knot $K$, and showed the following result. 

\begin{theorem}[Lin \cite{Lin}]\label{thm-Lin}
Let $K$ be a classical knot in $S^3$. 
Then the number of conjugacy classes of irreducible metabelian $SU(2)$-representations of $\pi_1(S^3 \backslash K)$ is 
$\displaystyle \frac{|\Delta_K(-1)|-1}{2}$. 
Here $|\Delta_K(-1)|$ is the knot determinant of $K$, which is the absolute value of $\Delta_K(-1)$, where $\Delta_K(t)$ is 
the Alexander polynomial of $K$. 
\end{theorem}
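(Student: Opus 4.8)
The plan is to pin down the structure of an irreducible metabelian $\rho\colon G\to SU(2)$ up to conjugacy, and then reduce the counting to a character count on a finite abelian group. Write $G=\pi_1(S^3\setminus K)$ and $G'=[G,G]$, so that $G/G'\cong\mathbb{Z}=\langle m\rangle$ with $m$ a meridian. First I would observe that for irreducible $\rho$ the abelian group $\rho(G')$ must be nontrivial, since otherwise $\rho$ factors through $\mathbb{Z}$ and is reducible. A nontrivial abelian subgroup of $SU(2)$ lies in a unique maximal torus, which after conjugation I take to be the diagonal torus $T$. As $G'$ is normal, $\rho(G)$ normalizes $T$, so $\rho(G)\subset N(T)=T\cup jT$ with $j=\left(\begin{smallmatrix}0&1\\-1&0\end{smallmatrix}\right)$; and $\rho(m)\in T$ would force $\rho(G)\subset T$, again contradicting irreducibility. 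Hence $\rho(m)\in jT$, a coset of trace-zero elements, and a further conjugation inside $T$ normalizes $\rho(m)=j$.

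Next I would restrict $\rho$ to $G'$. Since the target $T\cong\mathbb{R}/\mathbb{Z}$ is abelian, $\rho|_{G'}$ factors through the Alexander module $M=G'/[G',G']$, the first homology of the infinite cyclic cover, on which $m$ acts as multiplication by $t$. Because conjugation by $\rho(m)=j$ inverts $T$, the induced character $\phi\colon M\to\mathbb{R}/\mathbb{Z}$ is anti-equivariant, $\phi(t\cdot x)=-\phi(x)$, so $\phi$ annihilates $(t+1)M$ and factors through $M/(t+1)M$. This quotient is precisely $H_1(\Sigma_2(K))$, the homology of the double branched cover, a finite abelian group of order $|\Delta_K(-1)|$. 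Conversely, since $G/[G',G']$ is a split extension of the free group $\mathbb{Z}$ by $M$, each such anti-equivariant $\phi$ together with $\rho(m)=j$ assembles into a genuine representation of $G$. Thus normalized representations with $\rho(m)=j$ correspond bijectively to characters of $H_1(\Sigma_2(K))$, of which there are exactly $|\Delta_K(-1)|$.

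Finally I would sort out irreducibility and the residual conjugation. If $\phi$ is trivial then $\rho(G)=\langle j\rangle$ is abelian, hence reducible; if $\phi\neq0$ then $\rho(G')$ contains a non-central element of $T$ that does not commute with $j$, so $\rho$ is irreducible. This leaves $|\Delta_K(-1)|-1$ characters. Two normalized representations are conjugate in $SU(2)$ exactly when some $g$ intertwines them while fixing $\rho(m)=j$; such $g$ centralizes $j$, and the requirement that it preserve the torus $T$ containing both images forces $g\in\{\pm I,\pm j\}$, where conjugation by $j$ realizes $\phi\mapsto-\phi$. Hence conjugacy classes correspond to pairs $\{\phi,-\phi\}$ of nontrivial characters. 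The decisive arithmetic point is that $|\Delta_K(-1)|$ is odd, so $H_1(\Sigma_2(K))$ has no $2$-torsion and $\phi=-\phi$ implies $\phi=0$; the involution is therefore free on the nontrivial characters, and they pair into exactly $\frac{|\Delta_K(-1)|-1}{2}$ classes.

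The hard part will be the conjugacy analysis of the last paragraph: correctly identifying the residual gauge freedom that fixes $\rho(m)=j$ and verifying that it realizes precisely the involution $\phi\mapsto-\phi$ and nothing more, since this is exactly where the factor of two is produced. A secondary technical obstacle is confirming that every anti-equivariant character actually lifts to a representation of $G$ (which I would handle via the splitting of the metabelianization) and establishing the identification $M/(t+1)M\cong H_1(\Sigma_2(K))$ together with $|H_1(\Sigma_2(K))|=|\Delta_K(-1)|$.
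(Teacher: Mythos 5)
Your argument is correct in substance, but it is a genuinely different route from the one the paper relies on. The paper quotes Theorem \ref{thm-Lin} from Lin and, for its own Theorem \ref{thm2-1}, reproduces Lin's computational scheme: a concrete presentation of the knot group (Lemma \ref{lem-Lin}), normalization $\rho(x_j)$ diagonal and $\rho(z)=\left(\begin{smallmatrix}0&1\\-1&0\end{smallmatrix}\right)$ via Lemmas \ref{lem6} and \ref{lem6a}, translation of the relations through Fox calculus into the integer matrix $A=A(t)|_{t=-1}$, a Smith normal form count of solutions of $A\vec{\eta}=\mathbf{o}\pmod{2\pi}$ equal to the determinant, and finally the pairing $\vec{\eta}_0\leftrightarrow-\vec{\eta}_0$ realized by conjugation by $\rho(z)$, with oddness of the determinant making the pairing free. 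You replace all of this presentation-dependent bookkeeping by a structural argument: irreducibility forces $\rho(G)\subset N(T)$ with $\rho(m)\in jT$, the restriction to $[G,G]$ factors through the Alexander module, anti-equivariance kills $(t+1)M$, and the count becomes a count of characters of $H_1(\Sigma_2(K))\cong M/(t+1)M$ modulo the inversion $\phi\mapsto-\phi$. The two proofs compute the same thing --- the Smith normal form solution count in the paper is exactly $|\mathrm{Hom}(M/(t+1)M,\mathbb{R}/\mathbb{Z})|$ --- but yours explains conceptually where the determinant and the factor $\tfrac12$ come from, while the paper's matrix-level version is the one that transfers to the surface-knot setting of Theorem \ref{thm2-1}, where the Alexander ideal need not be principal and one genuinely works with the Alexander matrix of an explicit presentation.

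Two points in your write-up need small patches. First, your assertion that a nontrivial abelian subgroup of $SU(2)$ lies in a unique maximal torus fails for the central subgroup $\{\pm I\}$; you must separately rule out $\rho([G,G])\subseteq\{\pm I\}$, which is easy (then $\rho(G)$ is generated by $\rho(m)$ and $\pm I$, hence abelian and reducible), but without it the normalizer step is not forced. Second, your irreducibility claim that $\phi\neq0$ yields a \emph{non-central} element of $T$ already uses the oddness of $|\Delta_K(-1)|$: a character of order $2$ would have image $\{0,\tfrac12\}$, i.e.\ $\rho([G,G])=\{\pm I\}$, and the resulting representation would be reducible; oddness of $|H_1(\Sigma_2(K))|$ excludes such characters, so the arithmetic fact you invoke only at the pairing stage is needed earlier as well. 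With these two lines added, your proof is complete, and your conjugacy analysis (the stabilizer $Z(j)\cap N(T)=\{\pm I,\pm j\}$ realizing exactly $\phi\mapsto-\phi$) matches what the paper's pairing of $\vec{\eta}_0$ with $-\vec{\eta}_0$ accomplishes.
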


There are other researches that 
determine such a number for metabelian representations 
in terms of the knot determinant. 
The same statement with Theorem \ref{thm-Lin} holds true for 
irreducible metabelian $SL(2, \mathbb{C})$-representations of the knot group of a classical knot, due to Nagasato \cite{Nagasato}. And 
Fukuda \cite{Fukuda} studied irreducible metabelian $SL(2, \mathbb{C})$-representations of the knot group of a certain surface-knot called a branched twist spin, which is determined from a classical knot and a pair of integers. 

In this paper, we investigate irreducible metabelian $SU(2)$-representations of  the knot group of a surface-knot called a \lq\lq torus-covering $T^2$-knot''. 
A {\it surface-link} is the image of a smooth embedding of a closed surface into the 4-sphere $S^4$. When it consists of one component, it is called a {\it surface-knot}. In this paper, we treat orientable surface-knots, and we assume that classical links/knots and surface-links/knots are oriented. Two classical links/knots or surface-links/knots are {\it equivalent} if they belong to the same isotopy class. 
Joseph \cite{Joseph} studied Alexander ideals of surface-knots, which may be nonprincipal, and gave the notion of the determinant of a surface-knot.  
For a surface-knot $F$ with the Alexander ideal $\Delta(F)$, the knot determinant of $F$, denoted by $\Delta(F)|_{t=-1}$, is defined as the nonnegative generator of the $\mathbb{Z}$-ideal $\{f(-1) \mid f(t) \in \Delta(F)\}$. 
In this paper, we treat torus-covering $T^2$-knots. 
An embedded torus is called a {\it $T^2$-knot}, and 
a torus-covering $T^2$-knot is a $T^2$-knot determined from a pair of commutative $n$-braids $(a,b)$ called basis braids, where $n$ is a positive integer, which will be denoted by $\mathcal{S}_n(a,b)$. For a braid $a$, the closure of $a$ is the link/knot obtained from $a$ by connecting each $i$th initial point and $i$th terminal point by a trivial arc.

\begin{theorem}\label{thm2-1}
Let $F=\mathcal{S}_n(a,b)$ be a torus-covering $T^2$-knot with basis $n$-braids $(a, b)$ such that the closure of the braid $a$ is a knot. 
Then the number of conjugacy classes of irreducible metabelian $SU(2)$-representations of $\pi_1(S^4 \backslash F)$ is 
$\displaystyle \frac{(\Delta(F)|_{t=-1})-1}{2}$. Here, $\Delta(F)|_{t=-1}$ is the knot determinant of $F$.  
\end{theorem}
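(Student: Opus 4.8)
The plan is to follow Lin's strategy, but to carry it out on the two-braid presentation of the torus-covering group, reducing the enumeration to a linear problem over the Alexander module specialized at $t=-1$. First I would fix a presentation of $G=\pi_1(S^4\setminus F)$ adapted to the basis braids $(a,b)$: the group is generated by meridians $x_1,\dots,x_n$ of the $n$ sheets, with two families of relators obtained from the Artin actions of $a$ and of $b$, the commutativity $ab=ba$ ensuring their compatibility. Since the closure of $a$ is a knot, $H_1(G)\cong\mathbb{Z}$ is generated by the common image $\mu$ of all the $x_r$, so every relator has total exponent sum zero; this is the structural fact I would use repeatedly.

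Next I would normalize an irreducible metabelian representation $\rho\colon G\to SU(2)$. As in the classical case, $\rho([G,G])$ lies in a maximal torus $U(1)=\{e^{\mathbf{i}\theta}\}$, and irreducibility forces $\rho(\mu)$ to act on it by inversion, so $\rho(\mu)$ has trace zero and may be conjugated to $\mathbf{j}$. Because all meridians coincide modulo $[G,G]$, each satisfies $\rho(x_r)=\mathbf{j}\,e^{\mathbf{i}\phi_r}$ for some $\phi_r\in\mathbb{R}/2\pi\mathbb{Z}$, so a normalized representation is recorded by the tuple $(\phi_1,\dots,\phi_n)$. The residual freedom is conjugation by the normalizer of $U(1)$, acting by the simultaneous shift $\phi_r\mapsto\phi_r+2\alpha$ (the diagonal torus) together with the Weyl reflection $\phi_r\mapsto-\phi_r$.

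The heart of the argument is to turn each relator into a linear congruence. Using $\mathbf{j}\,e^{\mathbf{i}\theta}=e^{-\mathbf{i}\theta}\mathbf{j}$ and $(\mathbf{j}\,e^{\mathbf{i}\phi})^2=-1$, any word $w$ evaluates to $\sigma_w\,\mathbf{j}^{\,e(w)}e^{\mathbf{i}L_w}$ with $\sigma_w\in\{\pm1\}$, where $e(w)$ is the exponent sum modulo $2$ and $L_w=\sum_r c_r\phi_r$ is a linear form whose coefficients $c_r$ are the Fox derivatives $\partial w/\partial x_r$ evaluated at $t=-1$ (each passage past a $\mathbf{j}$ flips a sign, which is exactly the specialization $t\mapsto-1$). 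For a relator $r_k$ the exponent sum vanishes, so $\mathbf{j}^{\,e(r_k)}=1$; moreover the all-equal tuple $\phi\equiv0$ gives the abelian representation $\rho(x_r)=\mathbf{j}$, which already satisfies all relations, forcing $\sigma_{r_k}=1$. Hence the relations collapse to the homogeneous system $L_{r_k}(\phi)\equiv0\pmod{2\pi}$, i.e. $A\,\phi\equiv0\pmod{2\pi}$, where $A$ is the Alexander matrix of $F$ evaluated at $t=-1$. The diagonal lies in the solution set (by the fundamental formula of Fox calculus the row sums of $A$ vanish), and provided $\Delta(F)|_{t=-1}\neq0$ it is the whole identity component, so the solutions modulo the diagonal form a finite abelian group $\mathcal{A}$ whose order equals the order of the torsion subgroup of $\mathrm{coker}(A)$, which I would identify with the nonnegative generator of the evaluated ideal $\{f(-1)\mid f\in\Delta(F)\}$, that is, $\Delta(F)|_{t=-1}$.

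Finally I would pass to conjugacy classes. Since the simultaneous shift is already quotiented out, the identity of $\mathcal{A}$ is the reducible abelian representation and is discarded. Conjugation by $\mathbf{j}$ induces negation $x\mapsto-x$ on $\mathcal{A}$; as $|\mathcal{A}|=\Delta(F)|_{t=-1}$ is odd, negation fixes only the identity, hence acts freely on the $\Delta(F)|_{t=-1}-1$ nontrivial elements and pairs them into $\tfrac{(\Delta(F)|_{t=-1})-1}{2}$ classes, each an irreducible metabelian representation. The step I expect to be the main obstacle is the identification $|\mathcal{A}|=\Delta(F)|_{t=-1}$: unlike the classical principal case, here two relator families must be handled at once and the Alexander ideal may be nonprincipal, so I must show that the commutativity of $a$ and $b$ makes the $a$- and $b$-blocks of congruences jointly compatible and that the resulting torsion order is exactly the nonnegative generator of the evaluated ideal.
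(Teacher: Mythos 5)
Your proposal is correct and reaches the theorem by Lin's strategy, as the paper does, but you run it on a genuinely different presentation, and the middle of the argument differs accordingly. The paper never uses the all-meridian presentation $\langle t_1,\ldots,t_n \mid t_i=\mathcal{A}^a(t_i),\ t_i=\mathcal{A}^b(t_i)\rangle$ (which is the one coming from \cite{N}, and which would also drop out of the van Kampen computation in the proof of Lemma \ref{lem-k} if one fed in the braid presentation $G'$ instead of $G$). Instead, Lemma \ref{lem-k} splices Lin's Seifert-surface presentation of $\pi_1(S^3 \backslash \hat{a})$ --- generators $x_1,\ldots,x_{2g}$ lying in the commutator subgroup plus a single meridian $z$ --- with the words $A^b(x_j)$, $A^b(z)$ recording the $b$-monodromy. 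That choice buys three simplifications you have to work for: each $\rho(x_j)$ is diagonal and $\rho(z)=\left(\begin{smallmatrix}0&1\\-1&0\end{smallmatrix}\right)$ after conjugation (Lemmas \ref{lem6} and \ref{lem6a}); the $z$-column of $A(t)|_{t=-1}$ vanishes identically (Lemma \ref{lem4}(2)); and the solutions of $A\vec{\eta}\equiv\mathbf{o}\pmod{2\pi}$ are counted outright by the Smith normal form, with no residual torus action to quotient out. In your coordinates $\rho(x_r)=\mathbf{j}\,e^{\mathbf{i}\phi_r}$ you must instead (i) control the signs $\sigma_w$ --- your device of evaluating at the abelian representation $\phi\equiv 0$ is clean and legitimate, since every relator has exponent sum zero; (ii) verify that the row sums of $A(-1)$ vanish (Fox's fundamental formula), so that the diagonal shift is a solution and the $(n-1)$-minors of $A(-1)$ agree up to sign as the deleted column varies; and (iii) identify $|\mathcal{A}|$ with the gcd of those minors, which is by definition the nonnegative generator of $\{f(-1)\mid f(t)\in\Delta(F)\}$. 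All three go through; in particular the obstacle you flag at the end is not a real one --- nonprincipality of $\Delta(F)$ is harmless because the determinant is defined via the evaluated $\mathbb{Z}$-ideal, exactly what the Smith form computes, and commutativity of $a$ and $b$ is needed only for the presentation to exist, not to reconcile the two relator blocks.

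Two assertions in your final step require the same external input the paper invokes: that $\Delta(F)|_{t=-1}$ is odd, in particular nonzero. The paper gets this from the fact that $\pi_1(S^4\backslash F)$ is a quotient of the classical knot group of $\hat{a}$, whose determinant is odd, citing \cite{Joseph}; without it your solution set could be infinite (a vanishing elementary divisor) and negation could have $2$-torsion fixed points. Oddness also closes a degenerate case you pass over silently: a nontrivial class of $\mathcal{A}$ with $\phi_r-\phi_s\equiv\pi$ for some $r,s$, so that every $\rho(x_r)=\pm\mathbf{j}\,e^{\mathbf{i}\phi_0}$, yields an abelian and hence reducible representation; but then $2\phi$ lies on the diagonal, the class is $2$-torsion in $\mathcal{A}$, and oddness forces it to be trivial. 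This is the exact analogue of the paper's remark that, each $d_j$ being odd, $\{\rho(x_1),\ldots,\rho(x_{2g})\}$ is neither $\{E\}$ nor $\{-E\}$ nor $\{E,-E\}$; with that sentence added, your argument is complete.
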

Further we study our theme using Fox's $p$-coloring. 
Alexander module is closely related to Alexander quandle coloring of a classical knot diagram or a surface-knot diagram, and $|\Delta_K(-1)|$ or $\Delta(F)|_{t=-1}$ is closely related to Fox's $p$-coloring. 
We say that a classical knot or a surface-knot is {\it $p$-colorable} if its diagram admits a non-trivial $p$-coloring such that colors of arcs or sheets generate the dihedral quandle $R_p$ by the binary operation of $R_p$. 
For a braid $a$, we denote by $\hat{a}$ the closure of $a$. 

\begin{theorem}\label{thm2-2}
Let $\mathcal{S}_n(a,b)$ be a torus-covering $T^2$-knot with basis $n$-braids $(a, b)$ such that the closure of the braid $a$ is a knot. 
If $\mathcal{S}_n(a,b)$ is $|\Delta_{\hat{a}}(-1)|$-colorable, then the number of conjugacy classes of irreducible metabelian $SU(2)$-representations of $\pi_1(S^4 \backslash \mathcal{S}_n(a,b))$ is 
$\displaystyle \frac{|\Delta_{\hat{a}}(-1)|-1}{2}$. 
\end{theorem}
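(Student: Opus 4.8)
The plan is to deduce Theorem~\ref{thm2-2} from Theorem~\ref{thm2-1}. Since Theorem~\ref{thm2-1} already expresses the number of conjugacy classes of irreducible metabelian $SU(2)$-representations of $\pi_1(S^4\backslash\mathcal{S}_n(a,b))$ as $(\Delta(F)|_{t=-1}-1)/2$, where $F=\mathcal{S}_n(a,b)$, it suffices to prove the single numerical identity
\begin{equation}\label{eq-plan-key}
\Delta(F)|_{t=-1}=|\Delta_{\hat{a}}(-1)|
\end{equation}
under the hypothesis that $\mathcal{S}_n(a,b)$ is $|\Delta_{\hat{a}}(-1)|$-colorable. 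I would prove \eqref{eq-plan-key} by establishing two divisibilities and combining them.

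First I would show that $\Delta(F)|_{t=-1}$ divides $|\Delta_{\hat{a}}(-1)|$. For this I would write down a Wirtinger-type presentation of $\pi_1(S^4\backslash\mathcal{S}_n(a,b))$ coming from the two basis braids and apply Fox calculus to obtain a presentation matrix of the Alexander module. The relations split into an $a$-part and a $b$-part, and the $a$-part alone reproduces the Alexander matrix of the closure $\hat{a}$; hence the Alexander ideal $\Delta(F)$, being the relevant elementary ideal of the full matrix, contains the ideal generated by the $a$-part, and in particular contains $\Delta_{\hat{a}}(t)$. Since the knot determinant is by definition the nonnegative generator of the $\mathbb{Z}$-ideal $\{f(-1):f\in\Delta(F)\}$, enlarging the ideal by the extra $b$-relations can only shrink this nonnegative generator with respect to divisibility, so $\Delta(F)|_{t=-1}$ divides $\Delta_{\hat{a}}(-1)$, hence divides $|\Delta_{\hat{a}}(-1)|$.

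Next I would show the reverse divisibility $|\Delta_{\hat{a}}(-1)|\mid\Delta(F)|_{t=-1}$ using the colorability hypothesis. A Fox $p$-coloring of a surface-knot diagram is the same datum as a homomorphism from the knot group to the dihedral group of order $2p$ sending meridians to reflections, equivalently a solution of the coloring relations in $\mathbb{Z}/p$, which is governed by the Alexander module evaluated at $t=-1$. A nontrivial coloring whose colors generate $R_p$ is precisely a surjection onto $R_p$, and the existence of such a surjective coloring forces $p$ to divide the knot determinant $\Delta(F)|_{t=-1}$. Applying this with $p=|\Delta_{\hat{a}}(-1)|$ gives $|\Delta_{\hat{a}}(-1)|\mid\Delta(F)|_{t=-1}$. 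Combining the two divisibilities yields \eqref{eq-plan-key}, and Theorem~\ref{thm2-1} then gives the stated count $(|\Delta_{\hat{a}}(-1)|-1)/2$.

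The step I expect to be the main obstacle is the first divisibility: one must verify carefully that the torus-covering construction produces a group presentation whose $a$-relations genuinely reproduce the Alexander matrix of $\hat{a}$, and that the commutativity condition $ab=ba$ imposed on the basis braids does not introduce $b$-relations that alter this identification of the $a$-part with $\Delta_{\hat{a}}(t)$. A secondary subtlety arises in the colorability step when $p=|\Delta_{\hat{a}}(-1)|$ is composite: there the condition that the colors generate $R_p$ must be used in its full strength, namely as surjectivity onto $R_p$ rather than mere nontriviality, in order to conclude that $p$ itself, and not just a proper divisor of $p$, divides $\Delta(F)|_{t=-1}$.
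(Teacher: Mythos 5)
Your proposal is correct, and it reaches the paper's key identity $\Delta(F)|_{t=-1}=|\Delta_{\hat{a}}(-1)|$ (Proposition \ref{prop0614}) by a partly different route. The colorability half coincides with the paper: your assertion that a non-degenerate $p$-coloring forces $p\mid\Delta(F)|_{t=-1}$ is exactly Proposition \ref{prop916-6}, and you correctly flag the composite-$p$ subtlety that the paper handles by normalizing one color to $0$ (Lemma \ref{lem920-5}), using the quandle-generation hypothesis in full strength via Lemma \ref{lem12}, and running a Smith-normal-form argument. The other half genuinely differs. The paper deduces $\Delta(F)|_{-1}\leq|\Delta_{\hat{a}}(-1)|$ indirectly: from Lemmas \ref{lem-Lin} and \ref{lem-k} the group $\pi_1(S^4\backslash F)$ is a quotient of $\pi_1(S^3\backslash\hat{a})$, so its count of conjugacy classes of irreducible metabelian $SU(2)$-representations is at most that of the classical group, and comparing the counts supplied by Lin's Theorem \ref{thm-Lin} and Theorem \ref{thm2-1} converts this into an inequality of determinants. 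You instead argue directly on elementary ideals: Lemma \ref{lem-k} exhibits the surface-knot group on the same generators $x_1,\ldots,x_{2g},z$ with Lin's relations for $\hat{a}$ plus the extra relations $x_j=A^b(x_j)$, $z=A^b(z)$, so adding rows to the Alexander matrix enlarges the first elementary ideal, whence $\Delta(F)\supseteq\Delta(\hat{a})\cdot\Lambda$ and, evaluating at $t=-1$, $\Delta(F)|_{t=-1}$ divides $|\Delta_{\hat{a}}(-1)|$; since both numbers are positive (the determinant of a surface-knot is odd, see \cite[Proposition 5.8]{Joseph}), the two divisibilities force equality. The obstacle you anticipate is resolved by Lemma \ref{lem-k}: commutativity $ab=ba$ is needed only for the mapping-torus structure to exist and contributes no relations beyond the conjugation relations (with the longitude generator killed), so the $a$-part is literally an Alexander matrix of $\hat{a}$. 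Your route buys an unconditional divisibility $\Delta(F)|_{t=-1}\mid|\Delta_{\hat{a}}(-1)|$ valid without any colorability hypothesis and avoids invoking Lin's theorem for this direction, at the cost of a Fox-calculus/elementary-ideal verification; the paper's route is shorter given that Theorems \ref{thm-Lin} and \ref{thm2-1} are already established, but yields only the inequality. Either way, Theorem \ref{thm2-1} then gives the stated count $(|\Delta_{\hat{a}}(-1)|-1)/2$, exactly as in the paper.
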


Let $p$ be an integer. 
For a surface-knot $F$, we denote by $| \mathrm{Col}_p(F)|$ the number of $p$-colorings of a diagram of $F$. 

\begin{theorem}\label{thm2-3}
Let $\mathcal{S}_n(a,b)$ be a torus-covering $T^2$-knot with basis $n$-braids $(a, b)$ such that the closure of the braid $a$ is a knot. Let $p$ be an odd prime.  
If $\mathcal{S}_n(a,b)$ is $p$-colorable, and not $q$-colorable for any integer $q \neq p$ $(q>1)$, 
then the number of conjugacy classes of irreducible metabelian $SU(2)$-representations of $\pi_1(S^4 \backslash \mathcal{S}_n(a,b))$ is 
$\displaystyle \frac{|\mathrm{Col}_p(\mathcal{S}_n(a,b))|-p}{2p}$. 
\end{theorem}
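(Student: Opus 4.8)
The plan is to derive Theorem~\ref{thm2-3} from Theorem~\ref{thm2-1} by converting the $p$-coloring count into the knot determinant. Writing $F=\mathcal{S}_n(a,b)$, I claim that under the stated hypotheses
\[
|\mathrm{Col}_p(F)| = p\cdot(\Delta(F)|_{t=-1}).
\]
Granting this, substitution gives $\frac{|\mathrm{Col}_p(F)|-p}{2p}=\frac{p\,(\Delta(F)|_{t=-1})-p}{2p}=\frac{(\Delta(F)|_{t=-1})-1}{2}$, which is precisely the number produced by Theorem~\ref{thm2-1}. So the whole problem reduces to the displayed identity between colorings and the determinant.

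First I would fix a diagram of $F$ and regard the set $\mathrm{Col}_p(F)$ of $R_p$-colorings of its sheets as the solution set of the linear coloring relations over $\mathbb{F}_p$; this makes $\mathrm{Col}_p(F)$ an $\mathbb{F}_p$-vector space in which the constant (trivial) colorings form a distinguished $1$-dimensional subspace. Hence $|\mathrm{Col}_p(F)|=p^{1+d}$, where $d$ is the dimension of the space of reduced colorings. Next I would identify this reduced space with $M\otimes_{\mathbb{Z}}\mathbb{F}_p$ (equivalently $\mathrm{Hom}_{\mathbb{Z}}(M,\mathbb{F}_p)$), where $M=A_F/(t+1)A_F$ is the Alexander module of $F$ specialized at $t=-1$; here $M$ is a finite abelian group whose order realizes the determinant, $|M|=\Delta(F)|_{t=-1}$. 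Thus everything comes down to computing the group structure of $M$.

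The key step is to pin down $M$ from the colorability hypotheses. Decomposing $M\cong\bigoplus_i\mathbb{Z}/m_i$ into invariant factors, a prime $\ell$ divides some $m_i$ exactly when $F$ admits a nontrivial $R_\ell$-coloring (for $\ell$ prime, any such coloring automatically generates $R_\ell$), and a factor $m_i$ is divisible by $\ell^2$ exactly when $F$ admits an $R_{\ell^2}$-coloring whose colors generate $R_{\ell^2}$, since $M\otimes\mathbb{Z}/\ell^2$ then contains an element of order $\ell^2$. Because $F$ is $p$-colorable but not $q$-colorable for any $q\neq p$ with $q>1$, no prime other than $p$ can divide $|M|$ and no $m_i$ can be divisible by $p^2$; therefore $M\cong(\mathbb{Z}/p)^m$ is elementary abelian for some $m\ge1$. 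Consequently $\Delta(F)|_{t=-1}=|M|=p^m$ and $d=\dim_{\mathbb{F}_p}(M\otimes\mathbb{F}_p)=m$, so $|\mathrm{Col}_p(F)|=p^{1+m}=p\cdot p^m=p\cdot(\Delta(F)|_{t=-1})$, which is the required identity.

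The main obstacle I anticipate is the bookkeeping forced by working with surface-knots rather than classical knots: because the Alexander ideal of a surface-knot may be nonprincipal, the module $M$ need not be cyclic, so I cannot read $|M|$ off a single Alexander polynomial value and must instead argue at the level of the full module (using the presentation coming from the basis braids $(a,b)$) both to justify the identification of reduced colorings with $M\otimes\mathbb{F}_p$ and to translate $R_q$- and $R_{q^2}$-colorability into divisibility of the invariant factors $m_i$. The delicate point is verifying that the \lq\lq generating'' clause in the definition of $q$-colorability corresponds exactly to the presence of a cyclic summand of the corresponding order; once this dictionary is established, the elementary-abelian conclusion, and hence the theorem, follow.
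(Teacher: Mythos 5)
Your proposal is correct, and its skeleton (Theorem \ref{thm2-1} plus the identity $|\mathrm{Col}_p(F)|=p\cdot(\Delta(F)|_{t=-1})$) is exactly how the paper organizes the proof; but your proof of the identity takes a genuinely different route from the paper's. The paper proves the identity as Proposition \ref{prop916-5}, via a counting argument: fixing one sheet's color to $0$ (``condition O''), Lemma \ref{lem10} shows that for \emph{every} integer $r>1$ the number of condition-O $r$-colorings is either $1$ or $|\mathrm{Col}_p(F)|/p$, and then a Smith-normal-form computation at the special value $r=\Delta(F)|_{t=-1}$ shows this count equals $\Delta(F)|_{t=-1}$ itself, forcing the identity since $p$-colorability rules out the value $1$. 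You instead determine the full structure of the specialized module $M$ at $t=-1$: prime-colorability detects the prime divisors of $|M|$, and $\ell^2$-colorability detects $\ell^2$ dividing an invariant factor, so the hypotheses force $M\cong(\mathbb{Z}/p)^m$, whence $\Delta(F)|_{t=-1}=p^m$ and $|\mathrm{Col}_p(F)|=p^{m+1}$. Your structural argument is arguably cleaner and yields strictly more (the elementary abelian form of $M$), while the paper's counting lemma has the advantage of being reused verbatim in the proof of Corollary \ref{cor2-4}. Three points in your sketch need the paper's supporting lemmas to be fully rigorous: the ``generating'' clause in your dictionary requires the normalization and gcd arguments (Lemmas \ref{lem920-5}, \ref{lem920-1}, \ref{lem12}) — in particular the converse direction ($\ell^2$ dividing an invariant factor implies genuine $R_{\ell^2}$-colorability) needs the observation that the unimodular change of basis in the Smith normal form preserves the gcd of the entries of a solution vector, so the SNF solution with an entry equal to $1$ produces colors generating $R_{\ell^2}$; the identification $|M|=\Delta(F)|_{t=-1}$ for a possibly nonprincipal ideal is justified as in the paper's Proposition \ref{prop916-6} (the evaluated elementary ideal is generated by the maximal minors of the reduced matrix $A(-1)$); and finiteness of $M$, i.e.\ $\Delta(F)|_{t=-1}\neq 0$, rests on Joseph's result that the determinant of a surface-knot is odd, which the paper invokes at the same juncture.
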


Together with results in \cite{N5}, Theorem \ref{thm2-3} implies the following corollary. 
Let $p$ be an odd prime, and let $n>1$ be an integer.  
Let $\sigma_1, \ldots, \sigma_{n-1}$ be the standard generators of the $n$-braid group. 
Let $c$ be an $n$-braid which is presented by $\sigma_{s(1)}^{\epsilon_1 p} \sigma_{s(2)}^{\epsilon_2 p} \ldots, \sigma_{s(n-1)}^{\epsilon_{(n-1)} p}$, where $s$ is a permutation of the set $\{1, \ldots, n-1\}$, and $\epsilon_i \in \{+1, -1\}$ $(i=1, \ldots, n-1)$. Remark that $\hat{c}$ is a knot. 
Put $l=2$ (respectively, $p$) if $n$ is odd (respectively, even). 
Let $\tau$ be a full-twist of $n$ parallel strings.  

\begin{corollary}\label{cor2-4}
For any integer $m$, the number of conjugacy classes of irreducible metabelian $SU(2)$-representations of $\pi_1(S^4\backslash \mathcal{S}_n(c,\tau^{lm}))$ is 
$\displaystyle \frac{p^{n-1}-1}{2}$. 
\end{corollary}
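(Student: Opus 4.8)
The plan is to obtain Corollary~\ref{cor2-4} as a direct consequence of Theorem~\ref{thm2-3}, reducing the problem to verifying that theorem's hypotheses for $F=\mathcal{S}_n(c,\tau^{lm})$ and then evaluating its formula. The assumption that $\hat{c}$ is a knot secures the standing requirement that the closure of the first braid be a knot. It then remains to establish (a) $F$ is $p$-colorable; (b) $F$ is not $q$-colorable for any integer $q\neq p$ with $q>1$; and (c) $|\mathrm{Col}_p(F)|=p^n$. Granting these, Theorem~\ref{thm2-3} yields
\[
\frac{|\mathrm{Col}_p(F)|-p}{2p}=\frac{p^n-p}{2p}=\frac{p(p^{n-1}-1)}{2p}=\frac{p^{n-1}-1}{2}.
\]
I route the argument through Theorem~\ref{thm2-3} rather than Theorem~\ref{thm2-2} because, as will emerge below, $F$ is $p$-colorable but not $p^{n-1}$-colorable once $n\ge 3$, so the hypothesis of Theorem~\ref{thm2-2} fails while that of Theorem~\ref{thm2-3} holds.

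For the coloring facts I would invoke the analysis of \cite{N5}, which reduces the $R_q$-colorings of $\mathcal{S}_n(a,b)$ to the vectors of $(\mathbb{Z}/q)^n$ fixed by the dihedral coloring actions of both $a$ and $b$. The coloring action of $\sigma_i$ is the unreduced Burau matrix at $t=-1$, whose nontrivial block $M=\bigl(\begin{smallmatrix}2&-1\\1&0\end{smallmatrix}\bigr)$ is unipotent: writing $M=I+N$ with $N^2=0$ gives $M^{\pm p}=I\pm pN\equiv I\pmod p$. Hence $c=\prod_j\sigma_{s(j)}^{\epsilon_j p}$ acts as the identity on $(\mathbb{Z}/p)^n$. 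The full twist acts on the reduced Burau module by the scalar $t^n$, i.e.\ by $(-1)^n$ at $t=-1$, and the parity-dependent normalization of $l$ makes $(-1)^{n\cdot lm}=1$; thus $\tau^{lm}$ also acts trivially modulo $p$. Every vector of $(\mathbb{Z}/p)^n$ is therefore admissible, so $|\mathrm{Col}_p(F)|=p^n$, and since $n>1$ some of these colorings generate $R_p$; this gives (a) and (c).

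For (b) I would show that $p$ is the only prime relevant to the colorability of $F$. A short computation of the reduced Burau matrix of $c$ at $t=-1$ gives $|\det(\mathrm{Burau}(c)|_{t=-1}-I)|=p^{n-1}$ — for example, for $n=3$ and $s=\mathrm{id}$ one finds $\det\bigl(\begin{smallmatrix}-p^2&p\\-p&0\end{smallmatrix}\bigr)=p^2$ — so that $|\Delta_{\hat{c}}(-1)|=p^{n-1}$ is a power of $p$. As $\tau^{lm}$ contributes no constraint at $t=-1$, a $q$-coloring of $F$ with $q\neq p$ prime would force a nontrivial $q$-coloring of $\hat{c}$, contradicting $q\nmid p^{n-1}$; for composite $q$ the generating condition would require the coloring module of $F$ to surject onto $\mathbb{Z}/q$, whereas the count in (c) forces this module to be the elementary abelian group $(\mathbb{Z}/p)^{n-1}$, which surjects only onto $\mathbb{Z}/p$.

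The main obstacle is precisely the coloring input: the exact count $|\mathrm{Col}_p(F)|=p^n$ and the colorability dichotomy across all $q$, which rest on the Alexander-ideal computation of \cite{N5} for these torus-covering knots. The delicate point is the parity-dependent choice of $l$. The vanishing of the reduced Burau scalar of $\tau^{lm}$ at $t=-1$ explains $l=2$ for odd $n$, but for even $n$ the $t=-1$ reduction is insensitive to $l$, so the requirement $l=p$ is invisible at this level; moreover for even $n$ the factor $1+t+\cdots+t^{n-1}$ vanishes at $t=-1$, so the passage from $\det(\mathrm{Burau}(c)-I)$ to $\Delta_{\hat{c}}(-1)$ is itself more subtle. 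Its justification — ensuring that the Alexander ideal of the \emph{surface} knot $F$ over $\mathbb{Z}[t^{\pm1}]$, and not merely that of the classical knot $\hat{c}$, has determinant exactly $p^{n-1}$ — is where the argument must be carried out with care, and it is supplied by \cite{N5}.
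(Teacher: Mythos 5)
Your top-level route is the same as the paper's: verify the hypotheses of Theorem \ref{thm2-3} for $F=\mathcal{S}_n(c,\tau^{lm})$, using the reduction from \cite{N5} of colorings of $F$ to strand-color vectors fixed by the coloring actions of both basis braids, and then evaluate $(p^n-p)/(2p)=(p^{n-1}-1)/2$. Your mod-$p$ computation for $c$ (each block $M=I+N$ with $N^2=0$, so $M^{\pm p}\equiv I\pmod p$) is correct, and is a clean linear-algebra substitute for the paper's direct count of colorings of $\hat{c}$ as a connected sum of $n-1$ copies of the $(2,p)$-torus knot. However, two of your steps do not hold as written. First, the inference ``the full twist acts on the reduced Burau module by $t^n$, hence $\tau^{lm}$ acts trivially modulo $p$'' fails for even $n$: the coloring space is the \emph{unreduced} module, on which $\tau$ at $t=-1$ is $I$ plus a nonzero nilpotent already for $n=2$ (where $\sigma_1^2=I+2N$), so $\tau^{m'}$ fixes only the diagonal vectors mod $p$ unless $p$ divides the relevant coefficient --- which is exactly why $l=p$ is imposed for even $n$. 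You flag this yourself, but then the triviality claim, which underpins both (a) and (c), is left entirely to \cite{N5}. It can be repaired by your own unipotency trick: since $\tau$ fixes $(1,\ldots,1)^T$ and induces the scalar $t^n=1$ on the $(n-1)$-dimensional reduced quotient at $t=-1$, one has $\tau=I+N'$ with $(N')^2=0$, whence $\tau^{pm}\equiv I\pmod p$.

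Second, and more seriously, your argument for (b) at composite $q$ is a non sequitur: the count $|\mathrm{Col}_p(F)|=p^n$ determines only the $p$-rank of the mod-$p$ coloring space and cannot force the integral coloring module to be $(\mathbb{Z}/p\mathbb{Z})^{n-1}$; in particular it cannot exclude a $\mathbb{Z}/p^2\mathbb{Z}$ summand, i.e.\ $p^2$-colorability, which is precisely the residual case after prime $q\neq p$ is eliminated via $q\nmid p^{n-1}$ (Proposition \ref{prop916-6}). The paper closes exactly this hole differently: it computes $|\mathrm{Col}_r^0|$ for \emph{every} $r>1$ (showing each $sp$-coloring of $\hat{c}$, hence of $F$, is $sC$ for a $p$-coloring $C$) and invokes the dichotomy of Lemma \ref{lem10}(2). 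Within your framework the repair is a mod-$p^2$ computation: $\prod_j\bigl(I+\epsilon_j\,pN_{s(j)}\bigr)\equiv I+p\sum_j\epsilon_j N_{s(j)}\pmod{p^2}$, and the fixed vectors of this matrix have all coordinates congruent mod $p$, so the colors lie in a single coset of $p\mathbb{Z}/p^2\mathbb{Z}$ and every $p^2$-coloring of $\hat{c}$ (a fortiori of $F$, whose colorings form a subset) is degenerate; combined with reduction of a non-degenerate $q$-coloring modulo prime divisors of $q$ (as in Lemma \ref{lem920-1} and the proof of Lemma \ref{lem10}), this rules out every $q\neq p$. With these two patches your proposal becomes a complete proof, somewhat more computational than the paper's counting argument but resting on the same input from \cite{N5}.
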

 
The paper is organized as follows. In Section \ref{sec2-8}, we review the Alexander ideal and the knot determinant of a classical knot and a surface-knot. In Section \ref{sec4}, we review torus-covering $T^2$-knots. In Section \ref{sec5}, We review Lin's presentation of the knot group of a classical knot $K$, using a Seifert surface of $K$. In Section \ref{sec3-pres}, we review a presentation of the knot group of a classical knot, using Artin's automorphism. 
In Section \ref{section-0602}, we give a presentation of the knot group of a torus-covering $T^2$-knot, and in Section \ref{sec-proof}, we show Theorem \ref{thm2-1}. 
In Section \ref{sec7}, we review quandle colorings and we show Theorems \ref{thm2-2} and \ref{thm2-3} and Corollary \ref{cor2-4}.

\section{The Alexander ideal and the knot determinant}\label{sec2-8}
In this section, we review the Alexander ideal and the knot determinant of a classical knot and a surface-knot \cite{Alexander, CF, Fox, Joseph, Kamada02, Kawauchi, Lickorish}. 

\subsection{Presentation matrices}\label{section2-1a}
Let $\Lambda=\mathbb{Z}[t, t^{-1}]$, the Laurent polynomial ring. 
Let $M$ be a finitely generated $\Lambda$-module. Then, since $\Lambda$ is Noetherian, we have an exact sequence $\Lambda^n \to \Lambda^m \to M \to 0$ for some positive integers $n,m$. A {\it presentation matrix} of $M$ is the $(n,m)$-matrix representing the homomorphism $\Lambda^n \to \Lambda^m$. 
Though presentation matrices of $M$ are not unique, they are related by a finite sequence of the following operations. Let $P$ be a presentation matrix. 

\begin{enumerate}
\item
Permutation of rows or columns.
\item
Multiplication of a row or a column by a unit of $\Lambda$.
\item
Addition to some row a $\Lambda$-linear combination of other rows.
\item
Addition to some column a $\Lambda$-linear combination of other columns.
\item
Exchange of $P$ with $\begin{pmatrix} P \\ * \end{pmatrix}$, where $*$ is a $\Lambda$-linear combination of rows of $P$, and the inverse operation. 
\item
Exchange of $P$ with $\begin{pmatrix} P & 0 \\ * & 1 \end{pmatrix}$, where $*$ is an arbitrary row, and the inverse operation. 
\end{enumerate}
We say that presentation matrices are {\it equivalent} if they are related by these operations.

\subsection{Fox calculus and the Alexander matrix}\label{section3-2} 
We review Fox's free differential calculus. 
Let $F_m$ be a free group with $m$ generators $x_1, \ldots, x_m$.
For each $j \in \{1, \ldots, m\}$, there is a unique map
\[
\frac{\partial}{\partial x_j}: F_m \to \mathbb{Z} F_m
\]
determined from the following conditions:

\begin{enumerate}
\item $\displaystyle \frac{\partial x_i}{\partial x_j}=\delta_{ij}$ (Kronecker delta), \\
\item $\displaystyle \frac{\partial (uv)}{\partial x_j}=\frac{\partial u}{\partial x_j}+u \frac{\partial v}{\partial x_j}$.
\end{enumerate}
We denote by the same notation $\partial/ \partial x_j$ the induced ring homomorphism $\mathbb{Z} F_m \to \mathbb{Z} F_m$, which is called 
 the {\it free derivative} with respect to $x_j$. 
 
Let $G$ be a group with a presentation with $m$ generators $x_1, \ldots,x_m$ and $n$ relations $r_1, \ldots, r_n$ such that $G/[G,G] $ is an infinite cyclic group with a generator $t$, where $[G, G]$ is the commutator subgroup. 
 Let $\phi: F_m \to G$ be the projection, and let $\psi: G \to G/[G,G]$ be the abelianization map. 
 We denote by the same notation $\phi$ and $\psi$ the induced ring homomorphisms $\mathbb{Z} F_m \to \mathbb{Z} G$ and $\mathbb{Z} G \to \mathbb{Z} [t, t^{-1}]=\Lambda$, respectively. 
The {\it Alexander matrix} $A(t)$ of $G$ is given by 
\[
A(t)=\left(\psi \phi \left( \frac{\partial r_i}{\partial x_j} \right) \right)_{1 \leq i \leq n, 1 \leq j \leq m}.
\]

\subsection{The Alexander ideal/polynomial and the determinant of a classical knot}\label{sec3-3}
Let $K$ be a classical knot in $S^3$. 
Among the two candidates of a generator $t$ of $H_1(S^3 \backslash K; \mathbb{Z}) \cong \mathbb{Z}$, we take as $t$ the one that is represented by a circle $c$ which encircles an arc of $K$ in a positive direction; that is, $c$ bounds a small disk $D$ such that $D$ intersects with $K$ transversely at one point $x$ and $(\mathbf{v}_1, \mathbf{v}_2, \mathbf{v}_3)$ coincides with the positive orientation of $S^3$, where $(\mathbf{v}_1, \mathbf{v}_2)$ are tangent vectors of $D$ at $x$ representing the orientation of $D$ induced from $c$, and $\mathbf{v}_3$ is a tangent vector of $K$ at $x$; and here the positive orientation of $S^3$ is given by the standard basis of the tangent space of $S^3$ at $x$. 

We denote the knot complement $S^3 \backslash K$ by $E$. Let $\mathbf{p}: \widetilde{E} \to E$ be the universal abelian covering, that is, the covering corresponding to the kernel of the abelianization map $\psi: \pi_1(E, \star) \to H_1(E; \mathbb{Z})$, where $\star$ is a base point. 
%which encircles an arc of $K$ in a positive direction. in a small plane which intersects an arc of $K$ transversely and the pair of the orientations of $c$ and $K$ coincides with  the orientation of $S^3$.
 The first homology group $H_1(E; \mathbb{Z}) =\langle t \rangle$ acts on $\widetilde{E}$ as the covering transformation group. 
We identify $\mathbb{Z}H_1(E;\mathbb{Z})$ with $\Lambda=\mathbb{Z}[t,t^{-1}]$, and we regard $H_1(\widetilde{E}, \mathbf{p}^{-1}(\star); \mathbb{Z})$ as a $\Lambda$-module, which is called the {\it Alexander module}. 

The {\it Alexander ideal} $\Delta(K)$ is the first elementary ideal of the Alexander module $M$, that is the $\Lambda$-ideal generated by $(m-1)$-minors of a presentation matrix $P$ of $M$, where $m$ is the number of columns of $P$. We define  
$\Delta(K)=\Lambda$ if $m=1$ and $\Delta(K)=0$ if $m-1>n$, where $n$ is the number of rows of $P$. 
The Alexander ideal is principal, and the generator is called 
the {\it Alexander polynomial}.  The Alexander polynomial is unique up to multiplication by a unit of $\Lambda$. 
A presentation matrix of the Alexander module $M$ is obtained as an  Alexander matrix, as follows.  Let $A(t)$ be the Alexander matrix of a presentation $\langle x_1, \ldots, x_m \mid r_1, \ldots, r_n \rangle$ of $\pi_1(S^3\backslash K)$, and 
let $f: \Lambda^n \to \Lambda^m$ be the linear map presented by $A(t)$.  
Then, $M$ is isomorphic to $\mathrm{Coker} f$; and  
$A(t)$ is a presentation matrix of the Alexander module.  

Let $\Delta_K(t)$ be the Alexander polynomial of a knot $K$. The {\it knot determinant} is the absolute value of the number obtained from putting $t=-1$ in $\Delta_K(t)$, that is, $|\Delta_K(-1)|$. The knot determinant $|\Delta_K(-1)|$ equals $\Delta(K)|_{t=-1}$; here, for an ideal $I \subset \Lambda=\mathbb{Z}[t, t^{-1}]$, $I|_{t=-1}$ denotes  the nonnegative generator of the ideal $\{f(-1) \mid f(t) \in I \} \subset \mathbb{Z}$ (see Remark \ref{remark-0216}). 

\subsection{The Alexander ideal and the determinant of a surface-knot}\label{section3-4}
For a surface-knot $F$, the Alexander module and the Alexander ideal $\Delta(F)$ are defined by the same way as in Section \ref{sec3-3}, where we denote by $E$ the knot complement $S^4 \backslash F$. The {\it knot determinant} is defined as $\Delta(F)|_{t=-1}$ \cite{Joseph}. 

\begin{remark}\label{remark-0216}
We remark that while Alexander ideals of classical knots are principal, surface-knots may have nonprincipal Alexander ideals. 
And an obstruction to 0-concordance of surface-knots are given in terms of nonprincipal Alexander ideals; see \cite{Joseph}.
\end{remark}

\section{Torus-covering $T^2$-knots}\label{sec4}
In this section, we review a torus-covering $T^2$-knot \cite{N}. 
A torus-covering $T^2$-knot is an embedded torus in the form of an unbranched covering over a standard torus, and it is determined from a pair of commutative braids, called basis braids.

Let $T$ be a torus standardly embedded in $S^4$, that is, $T$ is the boundary of an unknotted solid torus in $S^3 \times \{0\}=S^4 \cap (\mathbb{R}^4 \times \{0\})$ for the unit 4-sphere $S^4 \subset \mathbb{R}^5$. 
Let $N(T)$ be a tubular neighborhood of $T$ in $S^4$. 
\begin{definition}
A surface-link $F$ in $S^4$ is called a {\it torus-covering $T^2$-link} if it is contained in $N(T) \subset S^4$ and 
$\mathbf{p} |_F: F \to T$ is an unbranched covering map, where $\mathbf{p}: N(T) \to T$ is the natural projection. 
In particular, when it is a surface-knot, it is called a {\it torus-covering $T^2$-knot}. 
\end{definition}

By definition, a torus-covering $T^2$-knot is a $T^2$-knot: the image of an embedding of a torus. 
Let $F$ be a torus-covering $T^2$-knot. We identify $N(T)=D^2 \times S^1 \times S^1 \subset D^3 \times S^1 \subset S^4$, where the second $S^1$ in $D^2 \times S^1 \times S^1$ is $S^1$ in $D^3 \times S^1$.  Let $*$ be a base point of $S^1$, 
and let $\star=(\star',*) \in D^2 \times S^1 \subset D^3$ for a point $\star' \in D^2$, and let $\diamond=(\star,*)$ be a base point of $D^3 \times \{*\} \subset S^4$. We assume that the base point $\diamond$ is in the complement of $F$. Let $\mathbf{m}=S^1 \times \{*\}$ and $\mathbf{l}= \{*\} \times S^1$, a meridian and a longitude of $T$ with the base point $*'=(*,*)$.  The condition that $F$ is an unbranched cover over $T$ implies that the intersections $F \cap \mathbf{p}^{-1}(\mathbf{m})$ and $F \cap \mathbf{p}^{-1}(\mathbf{l})$ are closures of classical braids in solid tori $\mathbf{p}^{-1}(\mathbf{m})=D^2 \times S^1 \times \{*\}$ and $\mathbf{p}^{-1}(\mathbf{l})=D^2 \times \{*\} \times S^1$. 
Regarding that the solid tori are pasted at the 2-disk $\mathbf{p}^{-1}(*')=D^2 \times \{*'\}$, we have a pair of braids, called {\it basis braids}. 

Let $n$ be a positive integer.
For $n$-braids $a$ and $b$, we say $a$ and $b$ {\it commute} or $a$ and $b$ are {\it commutative} if $ab=ba$ as elements of the $n$-braid group. 
For a torus-covering $T^2$-link, basis braids commute, and for any 
pair of commutative braids $(a,b)$, there exists a unique torus-covering $T^2$-link with basis braids $(a,b)$. For a pair of commutative $n$-braids $(a,b)$, we denote by $\mathcal{S}_n(a,b)$ the torus-covering $T^2$-link with basis $n$-braids $(a,b)$.

 \section{Lin's presentation of a classical knot group}\label{sec5}
 In this section, we review Lin's presentation of a classical knot group (Lemma \ref{lem-Lin})  \cite{Lin}. 

\subsection{The Seifert surface associated with the closure of a braid}\label{sec-s}
 Let $K$ be a classical knot in $S^3$. 
 A {\it Seifert surface} of $K$ is a compact, connected, oriented surface $S$ embedded in $S^3$ whose boundary is $K$. Though we can discuss the argument in this section for a \lq\lq free'' Seifert surface, 
since it suffices to  consider closures of braids to show our results, we explicitly take the Seifert surface associated with the closure of a braid, in such a way as follows. For a braid $a$, we denote its closure by $\hat{a}$.  

Let $a$ be an $n$-braid for a positive integer $n$, with the starting point set $Q_n$. 
We consider the closure of the trivial $n$-braid $e$ with the same starting point set $Q_n$, and  
take $n$ disjoint disks in $S^3$ that bound the closure of $e$. 
Then, to these disks, we attach half-twisted bands each of which corresponds to a crossing of the braid, presented by $\sigma_i$ or $\sigma_i^{-1}$ $(i\in \{1,\ldots, n-1\})$. The resulting surface consisting of the disks and bands is a Seifert surface, which will be called the {\it Seifert surface associated with $\hat{a}$}. See Fig. \ref{fig6-1}. 

\begin{figure}[ht]
\includegraphics[height=3cm]{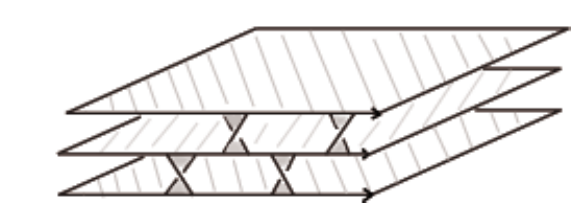}
\caption{The Seifert surface associated with the closure of a braid.}\label{fig6-1}
\end{figure}

\subsection{Lin's presentation}\label{sec-l}
Let $K$ be a classical knot. 
For our convenience, we consider $K$ in the form of the closure of a braid.  We remark that by Alexander theorem, any knot is presented by the closure of a braid. 

Let $S$ be the Seifert surface associated with $K$. 
A {\it spine} of $S$ is a bouquet of circles in $S$ such that it is a deformation retract of $S$, and half the number of circles is called the {\it genus} of $S$. 
Let $W$ be a spine of $S$, and let $g$ be the genus of $S$. Denote by $a_1, b_1, \ldots, a_g, b_g$ the oriented circles in $W$. We assume that $W$ is in the interior of $S$, and we denote by $*$ the base point of $W$. 
Let $S\times [-1, 1]$ be a bicollar of $S$ such that $S$ is identified with $S \times \{0\}$, and $[-1, 1]$ has an orientation coherent with the normal direction of $S$. Let $a_i^{\pm}=a_i \times \{\pm 1\}$ and $b_i^{\pm}=b_i \times \{\pm 1\}$ ($i=1,\ldots, g$). Let $c$ be the arc $\{*\} \times [-1,1]$, and let $c'$ be a trivial arc in $S^3 \backslash S \times (-1,1)$ connecting $\{*\} \times \{1\}$ and $\{*\} \times \{-1\}$. 
We consider $S^3 \backslash (S \times [-0.5,0.5])$. Since $W$ bounds $2g$ disks joined at the base point $*$,  the closure of $S^3 \backslash (S \times [-0.5,0.5])$ is a handlebody. Hence, the fundamental group $\pi_1 (S^3 \backslash (S \times [-0.5,0.5])$ is a free group. Choose a basis $x_1, \ldots, x_{2g}$ of $\pi_1 (S^3 \backslash (S \times [-0.5,0.5]))$ with a base point $\star=\{*\} \times \{1\}$. Let $\alpha_1, \ldots, \alpha_{2g}$ (respectively, $\beta_1, \ldots, \beta_{2g}$) be words in $x_1, \ldots, x_{2g}$ that present homotopy classes represented by the oriented circles $a_1^+, \ldots, a_g^+, b_1^+, \ldots, b_g^+$ (respectively, the conjugate by $c'$ of $a_1^-, \ldots, a_g^-, b_1^-, \ldots, b_g^-$) in $\pi_1 (S^3 \backslash S \times [-0.5,0.5], \star)$. We call $\alpha_j$ and $\beta_j$ $(j=1, \ldots, 2g)$ the {\it induced words}. 
A {\it meridian} is an oriented circle obtained from $c$ by connecting the end points by $c'$. 
Then, by van Kampen theorem, we have the following. 

\begin{lemma}[\cite{Lin}]\label{lem-Lin}
 Let $K$ be a knot in the form of the closure of a braid $a$. 
 Then, the knot group  $\pi_1(S^3\backslash K, \star)$ has a presentation 
\[
\langle \,x_1, x_2, \ldots, x_{2g}, z \mid z\alpha_j z^{-1}=\beta_j \quad (j=1,2,\ldots, 2g)\,\rangle, 
\]
where $g$ is the genus of the Seifert surface $S$ associated with $K=\hat{a}$, and $x_1, \ldots, x_{2g}$ are a basis of $\pi_1(S^3\backslash S \times [-0.5,0.5], \star)$, and $\alpha_j$ and $\beta_j$ are the {\it induced words} associated with $S$ and its spine, and $z$ is represented by a meridian of $K$. 
\end{lemma}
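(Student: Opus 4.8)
\emph{Proof proposal.} The plan is to realize the knot complement (equivalently, up to homotopy, the exterior) as the union of the handlebody $Y=S^3\backslash (S\times(-0.5,0.5))$ and a thickened Seifert surface, and then to read off the presentation from the Seifert--van Kampen theorem. As recalled just above the statement, the closure of $Y$ is a handlebody, so $\pi_1(Y,\star)$ is free on the chosen basis $x_1,\ldots,x_{2g}$. The complementary region, a neighborhood of the bicollar $S\times[-0.5,0.5]$ with $K=\partial S$ deleted, deformation retracts onto the spine $W$, so its fundamental group is free on the $2g$ classes of the spine circles $a_1,b_1,\ldots,a_g,b_g$.

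First I would fix an open cover $S^3\backslash K=A\cup B$, where $A$ is a slight open thickening of $Y$ and $B=S\times(-0.6,0.6)\backslash K$. The key point is that $B$ connects the two sides of the surface, while $A\cap B$ deformation retracts onto the \emph{two} disjoint copies $S\times\{-0.5\}$ and $S\times\{+0.5\}$, each a copy of $S$ carrying a free $\pi_1$ on $2g$ generators. Because $A\cap B$ is disconnected with two components, van Kampen yields not an amalgamated product but an HNN-type extension: the arc $c=\{*\}\times[-1,1]$ together with the closing arc $c'$ crosses the bicollar from the $+$-side to the $-$-side and contributes a single stable letter, which is exactly the class $z$ of the meridian obtained by joining $c$ and $c'$.

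Next I would identify the two edge inclusions with the two pushoff maps. The inclusion of the $+$-component into $A=Y$ sends the $j$-th spine circle to the loop $a_j^{+}$ or $b_j^{+}$, whose class in $\pi_1(Y,\star)=\langle x_1,\ldots,x_{2g}\rangle$ is by definition the induced word $\alpha_j$; the inclusion of the $-$-component, after conjugation by $c'$ to return to the base point $\star=\{*\}\times\{1\}$, sends it to the class represented by $\beta_j$. Writing the graph-of-groups presentation with spanning tree the $+$-edge, the $\pi_1(B)$-generators are identified with the $\alpha_j$ (so they are eliminated), and the remaining $-$-edge introduces the stable letter $z$ with relations $z\alpha_j z^{-1}=\beta_j$ for $j=1,\ldots,2g$. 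No further relations occur, since all groups involved are free and the edge maps send free generators to the $\alpha_j,\beta_j$; this is precisely the asserted presentation.

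The main obstacle I expect is bookkeeping rather than conceptual. One must verify that the disconnected intersection produces a single stable letter equal to the meridian (and not an amalgamation), and that the base-point correction along $c'$ converts the negative pushoffs into exactly the words $\beta_j$ with the correct orientation, so that the relation reads $z\alpha_j z^{-1}=\beta_j$ rather than its inverse. The geometric heart is the claim that conjugation by $z$ carries the $+$-pushoff to the $-$-pushoff: pushing a loop lying just off one side of $S$ across the bicollar to the other side is realized by conjugation by the arc $c\cup c'$ threading through the surface, which is what makes the meridian the stable letter of the HNN extension.
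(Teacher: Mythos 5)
Your proposal is correct and is essentially the argument the paper has in mind: Section 5.2 sets up exactly this decomposition (handlebody complement of the thickened Seifert surface, spine, pushoffs $\alpha_j,\beta_j$, meridian $z=c\cup c'$) and then simply invokes van Kampen, which is Lin's original argument. Your write-up merely makes explicit the graph-of-groups/HNN bookkeeping for the disconnected intersection that the paper leaves implicit, including the correct identification of the stable letter with the meridian and the base-point correction along $c'$.
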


\section{Presentation of a classical knot group in terms of Artin's automorphism.}\label{sec3-pres}

In this section, we review a presentation of the group of the closure of a braid, in terms of Artin's automorphism \cite{CF, Kawauchi, Lickorish}. 
Let $n$ be a positive integer. Let $B_n$ be the $n$-braid group, and let $\sigma_1, \ldots, \sigma_{n-1}$ be the standard generators of $B_n$. 
For an $n$-braid $a$ in a cylinder $D^2 \times [0,1]$ with the starting point set $Q_n$, 
let $\{ h_u^a\}_{u \in [0,1]}$ be an isotopy of $D^2$ rel $\partial D^2$ such that $\cup_{u \in [0,1]} h_u(Q_n) \times \{u\}=a$. 
Then, the homeomorphism $h_1^a$ permutes the $n$-punctures $Q_n$ in $D^2$, carrying the starting point set to the terminal point set; so we regard it as $h_1^a: (D^2, Q_n) \to (D^2, Q_n)$. Let $\star' \in \partial D^2$ be a base point of $D^2 \backslash Q_n$. The induced map $h_{1*}^{a}$, which will be denoted by $\mathcal{A}^a : \pi_1(D^2 \backslash Q_n, \star') \to \pi_1(D^2 \backslash Q_n, \star')$ is an automorphism of $F_n=\pi_1(D^2 \backslash Q_n, \star')$, called {\it Artin's automorphism}; note that $F_n$ is a free group of rank $n$, generated by the standard generators of $\pi_1(D^2 \backslash Q_n, \star')$. Here, the standard generators $t_1, \ldots, t_n$ of $\pi_1(D^2 \backslash Q_n, \star')$ are given in such a way as follows: We identify $D^2$ with $[0,n+1] \times [-1,1]$, and we identify $Q_n$ with a set of points $\{ q_1, \ldots, q_n\}$ such that $q_i$ is given by $(i,0)$ $(i=1, \ldots, n)$, and we identify the base point $\star'$ with the point $(1, 1)$; then, the $i$th generator $t_i$ is represented by a closed path which starts from the base point, goes straight to the $i$th point $q_i$, encircles it once anti-clockwise, and returns straight to the base point $(i=1, \ldots, n)$. 

Artin's automorphisms give a homomorphism $B_n \to \mathrm{Aut}(F_n)$, which is faithful.  The generator $\sigma_i$ and its inverse $(i=1,\ldots, n-1)$ act on
$F_n=\langle t_1, \ldots, t_n \rangle$ as follow:  
\begin{eqnarray*}
\mathcal{A}^{\sigma_i}(t_j)=\begin{cases}
t_i t_{i+1} t_i^{-1} & \text{ if $j=i$}\\
t_i & \text{ if $j=i+1$}\\
t_j & \text{ otherwise}, 

\end{cases}
\end{eqnarray*}

and 

\begin{eqnarray*}
\mathcal{A}^{\sigma_i^{-1}}(t_j)=\begin{cases}
t_{i+1}  & \text{ if $j=i$}\\
t_{i+1}^{-1}t_{i}t_{i+1} & \text{ if $j=i+1$}\\
t_j & \text{ otherwise}. 

\end{cases}
\end{eqnarray*}

We take the closure of $a$ in such a way that 
$\hat{a} \subset D^2 \times S^1 \subset S^3$, with $S^1=[0,1]/0 \sim 1$. Let $*=0$ be a base point of $S^1$, and 
put $\star=(\star', *)$, which will be taken as a base point of $S^3 \backslash \hat{a}$. 
By van Kampen theorem, the knot/link group of $\hat{a}$ has the following presentation. 
  
  \begin{lemma}\label{lem-braid}
For an $n$-braid $a$, the knot/link group $\pi_1(S^3\backslash \hat{a}, \star)$ has a presentation 
\[
\langle \,t_1, t_2, \ldots, t_{n} \mid t_i=\mathcal{A}^a (t_i) \quad (i=1,2,\ldots, n)\,\rangle,
\]
where $t_1, \ldots, t_n$ are the standard generators of the free group $\pi_1(D^2 \backslash Q_n \times \{*\}, \star)$, and $\mathcal{A}^a$ is Artin's automorphism associated with $a$. 
  \end{lemma}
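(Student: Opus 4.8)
The plan is to compute $\pi_1(S^3\backslash\hat{a},\star)$ by viewing the closed braid inside the standard genus-one Heegaard splitting of $S^3$ and applying the van Kampen theorem, following exactly the setup fixed just before the statement. I would write $S^3=V_1\cup_{T^2}V_2$, where $V_1=D^2\times S^1$ is the solid torus containing $\hat{a}$ in its interior and $V_2$ is the complementary solid torus glued along $T^2=\partial V_1$. Since $\hat{a}\subset\mathrm{int}(V_1)$, we have $S^3\backslash\hat{a}=(V_1\backslash\hat{a})\cup_{T^2}V_2$, and thickening the two pieces to open sets whose intersection is homotopy equivalent to $T^2$ puts us in position to amalgamate.

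First I would identify $V_1\backslash\hat{a}$ as a mapping torus. Using the isotopy $\{h_u^a\}_{u\in[0,1]}$ whose track is $a$, the map $(x,u)\mapsto(h_u^a(x),u)$ trivializes the complement of $a$ in $D^2\times[0,1]$ as a product bundle with fibre the punctured disk $D^2\backslash Q_n$. Closing up the braid (identifying the two ends of $[0,1]$ by the identity on $D^2$) turns this product into the mapping torus of the monodromy $h_1^a$ acting on $D^2\backslash Q_n$, whose induced map on $\pi_1$ is precisely Artin's automorphism $\mathcal{A}^a$. Hence $\pi_1(V_1\backslash\hat{a})$ is the semidirect product $F_n\rtimes_{\mathcal{A}^a}\mathbb{Z}$, with presentation $\langle\,t_1,\dots,t_n,s\mid s\,t_i\,s^{-1}=\mathcal{A}^a(t_i)\,\rangle$, where $s$ is represented by the longitude $\{\star'\}\times S^1$ and the $t_i$ are the standard generators of the fibre group.

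Next I would feed the second solid torus into the van Kampen amalgamation. On $T^2$, the meridian $\mu=\partial D^2\times\{*\}$ of $V_1$ does not bound in $V_1\backslash\hat{a}$ (its disk meets $\hat{a}$) and reads $t_1t_2\cdots t_n$ in the fibre group, while the longitude $\lambda=\{\star'\}\times S^1$ represents $s$. Passing to $V_2$, the roles of meridian and longitude are interchanged: $\mu$ becomes the core curve of $V_2$ and $\lambda$ becomes the meridian of $V_2$, so $\lambda$ bounds a disk in $V_2$. Consequently, amalgamating with $\pi_1(V_2)\cong\mathbb{Z}$ imposes two relations: the element $s=\lambda$ is killed, and $\mu=t_1t_2\cdots t_n$ is identified with the core generator of $V_2$, which merely eliminates that generator and contributes no relation among the $t_i$. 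Setting $s=1$ collapses the conjugation relations $s\,t_i\,s^{-1}=\mathcal{A}^a(t_i)$ to $t_i=\mathcal{A}^a(t_i)$, yielding the claimed presentation.

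The step I expect to require the most care is the bookkeeping of the curves $\mu$ and $\lambda$ on $T^2$ together with their images in the two pieces: one must verify that the longitude $\lambda$ of $V_1$ is exactly the stable letter $s$ of the mapping torus and is precisely the curve capped off when $V_2$ is glued in, and that the base-path and orientation conventions make the surviving relation read $t_i=\mathcal{A}^a(t_i)$ rather than its inverse. Once these identifications are pinned down, the reduction via van Kampen is routine, and the resulting presentation is independent of the auxiliary choices.
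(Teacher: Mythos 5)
Your proof is correct and takes essentially the same approach as the paper: the paper states Lemma \ref{lem-braid} with only the remark ``by van Kampen theorem'' for the closure $\hat{a} \subset D^2 \times S^1 \subset S^3$, and your mapping-torus description of $\pi_1(V_1 \backslash \hat{a}) \cong F_n \rtimes_{\mathcal{A}^a} \mathbb{Z}$ followed by gluing the complementary solid torus to kill the stable letter $s$ is precisely the intended argument (indeed it is the same scheme the paper carries out in detail in the proof of Lemma \ref{lem-k}, where $s=1$ is likewise imposed by attaching the complement). Your cautionary point about the direction of the monodromy relation is moreover harmless: once $s=1$, the relations $t_i=\mathcal{A}^a(t_i)$ and $t_i=\mathcal{A}^{a^{-1}}(t_i)$ define the same quotient of $F_n$, since $\mathcal{A}^a$ descends to the identity on it.
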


\section{Presentation of the knot group of $\mathcal{S}_n(a,b)$}
\label{section-0602}
 
In this section, we give a presentation of the knot group of $\mathcal{S}_n(a,b)$ such that $\hat{a}$ is a knot. We consider $\mathcal{S}_n(a,b)$ with the construction described in Section \ref{sec4}.  

We consider two presentations $G$ and $G'$ of the group $\pi_1 (D^3 \times \{*\} \backslash \hat{a})$, given by Lemmas \ref{lem-Lin} and \ref{lem-braid}, respectively. 
We arrange that the base point $\star \in D^3$ with respect to $G$ coincides with that with respect to $G'$. Then, we have presentations of $\pi_1 (D^3 \times \{*\} \backslash \hat{a})$ with a base point $\diamond=(\star, *)$, 
as follow: 
\[
G=\langle \,x_1, x_2, \ldots, x_{2g}, z \mid z\alpha_j z^{-1}=\beta_j \quad (j=1,2,\ldots, 2g)\,\rangle\]
of Lemma \ref{lem-Lin}, and 
\[G'=\langle \,t_1, t_2, \ldots, t_n \mid t_i=\mathcal{A}^a (t_i) \quad (i=1,2,\ldots, n)\,\rangle\]
of Lemma \ref{lem-braid}. 
Let $F(\cdots)$ denote the free group generated by the corresponding letters.
Let 
\begin{eqnarray*}
&&\phi: F(x_1, \ldots, x_{2g},z) \to G,\label{eq-phi}\\
&&\phi': F(t_1, \ldots, t_n) \to G'
\end{eqnarray*}
 be projections. 
Let $u_j \in F(t_1, \ldots, t_n)$ $(j=1,\ldots, 2g)$ be a word satisfying $\phi'(u_j)=\phi(x_j)$. Then, take a word $A^b(x_j) \in F(x_1, \ldots, x_{2g}, z)$ satisfying $\phi(A^b(x_j))=\phi'(\mathcal{A}^b(u_j))$. 
Similarly, considering $z$ instead of $x_j$, we take $A^b(z) \in F(x_1, \ldots, x_{2g},z)$; for $v\in F(t_1, \ldots, t_n)$ satisfying $\phi'(v)=\phi(z)$, $\phi(A^b(z))=\phi'(\mathcal{A}^b(v))$. 
We call $A^b(x_j)$ and $A^b(z)$ {\it words associated with Artin's automorphism $\mathcal{A}^b$}. 

\begin{lemma}\label{lem-k}
Let $F=\mathcal{S}_n(a,b)$ be a torus-covering $T^2$-knot with basis $n$-braids $(a, b)$ such that the closure of the braid $a$ is a knot. 
Then, the knot group $\pi_1(S^4 \backslash \mathcal{S}_n(a,b), \diamond)$ has a presentation 
\begin{equation}\label{eq-*}
\left\langle 
\begin{matrix}  x_1, x_2, \ldots, x_{2g}, z \end{matrix}
 \left| 
\begin{matrix}
 \ z\alpha_j z^{-1}=\beta_j,&\\
 \ x_j=A^b(x_j),& \\
 z=A^b(z) &\,(j=1,2,\ldots, 2g)
\end{matrix} \right. 
\right\rangle,
\end{equation}
where $g$ is the genus of the associated Seifert surface $S$ of $\hat{a} \subset D^3 \times \{ * \}$, and $x_1, \ldots, x_{2g}$ are a basis of $\pi_1(D^3 \times \{  *\} \backslash S, \diamond)=F(x_1, \ldots, x_{2g})$, and 
$\alpha_j$ and $\beta_j$ are the induced words associated with $S$ and its spine, and $z$ is represented by a meridian of $\hat{a}$, and 
$A^b(x_j)$ and $A^b(z)$ are words associated with Artin's automorphism $\mathcal{A}^b$. 
\end{lemma}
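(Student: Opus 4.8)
The plan is to compute $\pi_1(S^4 \backslash F)$ by combining the standard decomposition of $S^4$ with the fibered structure of the complement of $F$ inside the piece that contains it, and then to rewrite the resulting monodromy in Lin's generators via the change-of-generator words $u_j$ and $v$. Concretely, I would start from
\[
S^4 = \partial(D^3 \times D^2) = (D^3 \times S^1) \cup_{S^2 \times S^1} (S^2 \times D^2),
\]
where the $S^1$ factor of $D^3 \times S^1$ is the longitude direction (the second $S^1$ of $N(T)$) and is identified with $\partial D^2$ of the piece $S^2 \times D^2$. Since $F \subset N(T) \subset D^3 \times S^1$, the piece $S^2 \times D^2$ is disjoint from $F$ and is simply connected, while $\pi_1$ of the gluing region $S^2 \times S^1$ is infinite cyclic, generated by the longitude loop $\ell=\{\star\}\times S^1$ based at $\diamond$. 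Van Kampen's theorem then gives
\[
\pi_1(S^4 \backslash F, \diamond) \cong \pi_1\big((D^3 \times S^1) \backslash F, \diamond\big) \big/ \langle\langle \ell \rangle\rangle,
\]
because $\ell$ bounds a disk of the form $\{p\}\times D^2$ in $S^2\times D^2$ and is therefore trivialized.

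Next I would analyze the first factor. Projecting $D^3 \times S^1$ onto the longitude $S^1$, the unbranched covering condition makes $F \to S^1$ a fiber bundle whose fiber over $*$ is the closure $\hat{a} \subset D^3 \times \{*\}$; hence $(D^3 \times S^1) \backslash F$ is a fiber bundle over $S^1$ with fiber $E = (D^3 \times \{*\}) \backslash \hat{a}$. The monodromy is the self-homeomorphism of $E$ obtained by travelling once around the longitude, which by construction braids the $n$ strands according to $b$. This mapping-torus structure yields a semidirect-product presentation
\[
\pi_1\big((D^3 \times S^1) \backslash F\big) \cong \pi_1(E, \diamond) \rtimes_{\Psi} \langle \ell \rangle,
\]
with $\ell$ the stable letter and $\Psi$ the induced automorphism of $\pi_1(E)$; the relations read $\ell\, g\, \ell^{-1} = \Psi(g)$ for $g \in \pi_1(E)$. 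Here I use that $\pi_1(D^3 \backslash \hat{a}) \cong \pi_1(S^3 \backslash \hat{a})$, so both Lemma \ref{lem-Lin} and Lemma \ref{lem-braid} apply to $E$.

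I would then take Lin's presentation $G=\langle x_1,\dots,x_{2g},z \mid z\alpha_j z^{-1}=\beta_j\rangle$ of $\pi_1(E)$ from Lemma \ref{lem-Lin} and rewrite the conjugation relations. On the Artin generators $t_1,\dots,t_n$ of Lemma \ref{lem-braid} the monodromy $\Psi$ agrees with Artin's automorphism $\mathcal{A}^b$, and by definition the words $A^b(x_j)$ and $A^b(z)$ are exactly the expressions of $\mathcal{A}^b(u_j)$ and $\mathcal{A}^b(v)$ back in Lin's generators, since $\phi(A^b(x_j))=\phi'(\mathcal{A}^b(u_j))$ and $\phi(A^b(z))=\phi'(\mathcal{A}^b(v))$. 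Thus $\Psi(x_j)=A^b(x_j)$ and $\Psi(z)=A^b(z)$, so the stable-letter relations become $\ell x_j \ell^{-1}=A^b(x_j)$ and $\ell z \ell^{-1}=A^b(z)$. Finally, imposing $\ell=1$ from the van Kampen quotient collapses these to $x_j=A^b(x_j)$ and $z=A^b(z)$, which together with $z\alpha_j z^{-1}=\beta_j$ is precisely the presentation (\ref{eq-*}).

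The main obstacle will be the step concerning the fibered structure: one must check carefully that travelling once around the longitude induces on $\pi_1(E)$ precisely the automorphism $\mathcal{A}^b$ (and not its inverse or a conjugate), tracking base points and using that $a$ and $b$ commute so that the fiber $\hat{a}$ is indeed carried to itself along the longitude. Once the monodromy is pinned down on the Artin generators, the passage to Lin's generators is purely formal through the defining equations for $u_j$, $v$, $A^b(x_j)$, and $A^b(z)$, and the van Kampen bookkeeping is routine.
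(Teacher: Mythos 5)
Your proposal is correct and follows essentially the same route as the paper: the paper likewise presents $(D^3 \times S^1) \backslash F$ as a mapping torus of $(D^3 \times \{*\}) \backslash \hat{a}$, converts the stable-letter relations into $x_j = A^b(x_j)$ and $z = A^b(z)$ via the defining equations for $u_j$, $v$, and the words associated with $\mathcal{A}^b$, and kills the longitude $s$ by gluing in $E = \mathrm{Cl}(S^4 \backslash D^3 \times S^1)$ (your $S^2 \times D^2$), whose fundamental group it shows is trivial. The monodromy identification you flag as the main obstacle is settled in the paper exactly as you anticipate, by the explicit formula $f(x) = (h_u^a \circ h_1^b \circ (h_u^a)^{-1}(x'), u, *)$ on $(D^2 \times S^1 \times \{*\}) \backslash \hat{a}$ and the identity elsewhere, where commutativity of $a$ and $b$ is used to choose $h_1^a$ and $h_1^b$ commuting so that this is well defined on the mapping torus.
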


\begin{proof}
We have 
\begin{equation}\label{eq-5-2}
\begin{split}
& \pi_1(D^3 \times \{*\} \backslash \hat{a}, \diamond) \\
& \quad = 
\langle \,x_1, x_2, \ldots, x_{2g}, z \mid z\alpha_j z^{-1}=\beta_j \quad (j=1,2,\ldots, 2g)\,\rangle, 
\end{split}
\end{equation}
where $x_1, \ldots, x_{2g}$ are generators of $\pi_1(D^3 \times \{  *\} \backslash S, \diamond)=F(x_1, \ldots, x_{2g})$ for the associated Seifert surface $S$ of $\hat{a} \subset D^3 \times \{ * \}$, and 
$\alpha_j$ and $\beta_j$ are the induced words associated with $S$  and its spine, and $z$ is represented by a meridian. 
Let $S^1 =[0,1]/0 \sim 1$ with the base point $*=0$. 
We take isotopies of $D^2$ rel $\partial D^2$, $\{h_u^a\}_{u \in [0,1]}$ and $\{h_u^b\}_{u \in [0,1]}$, that are associated with $a$ and $b$, respectively, such that $h_1^a$ and $h_1^b$ commute. 
Then, the space $(D^3 \times S^1) \backslash \mathcal{S}_n(a,b)$ is a mapping torus written as 
\begin{equation}
(D^3 \times S^1) \backslash \mathcal{S}_n(a,b) \cong 
((D^3 \times \{*\}) \backslash \hat{a}) \times [0,1]/(x,0) \sim (f(x), 1),
\end{equation}
where 
\begin{equation}
f(x)=\begin{cases} (h_u^{a}\circ h_1^b \circ (h_u^a)^{-1} (x'), u, *) & \text{if $x=(x',u, *) \in (D^2  \times S^1 \times \{*\})  \backslash \hat{a}$} \\
x & \text{otherwise.} \end{cases}
\end{equation}
The monodromy is given by $f$ which maps a closed path representing $x_j \in \pi_1(D^3 \times \{ *\} \backslash S, \diamond)=F(x_1, \ldots, x_{2g})$ to the one whose homotopy class in $\pi_1(D^3 \times \{  *\} \backslash \hat{a}, \diamond)$ is represented by $A^b(x_j)$ $(j=1, \ldots, 2g)$, and $f$ maps a meridian to a closed path whose homotopy class is represented by $A^b(z)$. Hence, by van Kampen theorem, $\pi_1(D^3 \times S^1 \backslash \mathcal{S}_n(a,b), \diamond)$ 
has the presentation obtained from $\pi_1(D^3 \times \{ *\} \backslash \hat{a}, \diamond)*\mathbb{Z}$, where $\pi_1(D^3 \times \{ *\} \backslash \hat{a}, \diamond)$ has the presentation (\ref{eq-5-2}), by adding relations $sx_j s^{-1}=A^b(x_j)$ and 
 $s zs^{-1}=A^b(z)$: 
\begin{eqnarray*}
&&\pi_1(D^3 \times S^1 \backslash \mathcal{S}_n(a,b), \diamond) \\
&& \quad =
 \left\langle 
\begin{matrix}  x_1, x_2, \ldots, x_{2g}, z,s \end{matrix}
 \left| 
\begin{matrix}
 \ z\alpha_j z^{-1}=\beta_j,&\\
 \ sx_j s^{-1}=A^b(x_j),& \\
 s zs^{-1}=A^b(z) &\,(j=1,2,\ldots, 2g)
\end{matrix} \right. 
\right\rangle,
\end{eqnarray*}
where $s$ is represented by $\{\star'\} \times \mathbf{l}=\{\star\} \times S^1$, which will be denoted by $\mathbf{l}$. 

We obtain $\pi_1(S^4 \backslash \mathcal{S}_n(a,b), \diamond)$, as follows.
Let $E= \mathrm{Cl}(S^4 \backslash D^3 \times S^1)$. 
Then, $S^4 \backslash \mathcal{S}_n(a,b)=(D^3 \times S^1) \backslash \mathcal{S}_n(a,b) \cup _{\partial (D^3 \times S^1)} E$. 
The space $E$ is homotopy equivalent to $S^4 \backslash \mathbf{l}$. Since any two circles disjointly embedded in $S^4$ are unlinked, we see that $\pi_1(E)=0$. 
Further, $\pi_1(\partial E)=\pi_1(\partial (D^3 \times S^1)) \cong \mathbb{Z}$ such that the generator is represented by $\mathbf{l}$. 
Hence, by van Kampen theorem, 

\begin{eqnarray*}
&&\pi_1(S^4 \backslash \mathcal{S}_n(a,b), \diamond) \\
&&\quad =
 \left\langle 
\begin{matrix}  x_1, x_2, \ldots, x_{2g}, z,s \end{matrix}
 \left| 
\begin{matrix}
 \ z\alpha_j z^{-1}=\beta_j,&\\
 \ sx_j s^{-1}=A^b(x_j),& \\
 s zs^{-1}=A^b(z) & \\
s=1 & \,(j=1,2,\ldots, 2g)
\end{matrix} \right. 
\right\rangle,
\end{eqnarray*}
which is the required formula. 
\end{proof}

We denote relations of the presentation (\ref{eq-*}) by $r_i$ $(r=1,\ldots, 4g+1)$: 
\begin{eqnarray*}%\label{eq-0527}
&&r_1=z \alpha_1 z^{-1} \beta_1^{-1}, \label{rel}\\
&& \ldots \nonumber\\
&& r_{2g}=z \alpha_{2g} z^{-1} \beta_{2g}^{-1}, \nonumber\\
&& r_{2g+1}=x_1 (A^b(x_1) )^{-1},\nonumber\\
&& \ldots\nonumber\\
&& r_{4g}=x_{2g}  (A^b(x_{2g}) )^{-1}, \nonumber\\
&& r_{4g+1}=z  (A^b(z) )^{-1} \label{rel4g+1} \nonumber.
\end{eqnarray*}
For each $i \in \{1, \ldots, 4g+1\}$, 
we write 
\begin{equation}\label{rel2}
r_i=z^{\nu_1}w_{i1} z^{\nu_2} w_{i2} \cdots z^{\nu_{n_i}} w_{i n_i},
\end{equation}
where $w_{ik}$ $(k=1,\ldots, n_i)$ is a word consisting of $x_1, \ldots, x_{2g}$ such that $w_{ik} \neq 1$ for $k=1,\ldots, n_i-1$, and $\nu_k$ $(k=1,\ldots, n_i)$ is an integer such that $\nu_k \neq 0$ for $k=2,\ldots, n_i$. Let $\mu_{ik}^j$ be the sum of indices of $x_j$ in $w_{ik}$ ($i=1,\ldots, 4g+1$, $j=1, \ldots, 2g$, $k=1, \ldots, n_i$). 

For these relations (\ref{rel2}), 
we have the following lemmas. 
Let $\phi: F(x_1, \ldots, x_{2g}, z) \to \pi_1(S^4 \backslash \mathcal{S}_n(a,b))$ be the projection, 
and let $\psi: \pi_1(S^4 \backslash \mathcal{S}_n(a,b)) \to \langle t\rangle\cong \mathbb{Z}$ be the abelianization map. 
We denote by the same notation $\phi$ and $\psi$ the induced ring homomorphisms of group rings over $\mathbb{Z}$.
%We use the notations in Section \ref{sec2-8}. 

\begin{lemma}\label{lem3}
For $i=1,\ldots, 4g+1$,  
the sum of indices of $z$ equals zero: 
\begin{equation}\label{eq621}
\nu_1 + \nu_2+ \cdots +\nu_{n_i}=0.
\end{equation}
\end{lemma}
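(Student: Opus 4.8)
The plan is to reduce the statement to one homological fact, namely that the composite homomorphism $\psi\phi \colon F(x_1,\ldots,x_{2g},z)\to\langle t\rangle\cong\mathbb{Z}$ assigns weight $1$ to $z$ and weight $0$ to every $x_j$. Granting this, $\psi\phi$ must coincide with the homomorphism $\varepsilon_z\colon F(x_1,\ldots,x_{2g},z)\to\mathbb{Z}$ defined by $z\mapsto 1$ and $x_j\mapsto 0$, since two homomorphisms out of a free group agreeing on a basis are equal; and $\varepsilon_z$ records precisely the total exponent of $z$ in a word. Reading off the decomposition (\ref{rel2}), in which the interspersed words $w_{ik}$ involve only the $x_j$, this gives $\psi\phi(r_i)=\nu_1+\nu_2+\cdots+\nu_{n_i}$. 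Because each $r_i$ is a relator of the presentation (\ref{eq-*}), we have $\phi(r_i)=1$ and hence $\psi\phi(r_i)=0$, which is exactly the asserted identity (\ref{eq621}).

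So the real work is to establish the two weights. That $\psi(z)=t$ is immediate: $z$ is represented by a meridian of $\hat a\subset D^3\times\{*\}$, and such a meridian is a meridian of $F$, generating $H_1(S^4\backslash F)\cong\mathbb{Z}$. For $\psi(x_j)=0$ I would argue geometrically. Each $x_j$ lies in the basis of $\pi_1(D^3\times\{*\}\backslash S,\diamond)$, so it is represented by a loop in the complement of the thickened Seifert surface of $\hat a$; in particular this loop is disjoint from the middle slice $S\times\{0\}$, which is itself a Seifert surface for $\hat a$. Thus the linking number of $x_j$ with $\hat a$ inside $D^3\times\{*\}$ vanishes, so $x_j$ bounds a surface in $(D^3\times\{*\})\backslash\hat a$. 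Since $F$ meets $D^3\times\{*\}$ exactly in $\hat a$, this surface avoids all of $F$, whence $[x_j]=0$ in $H_1(S^4\backslash F)$ and $\psi(x_j)=0$.

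The one delicate point will be this last passage from the three-dimensional linking computation to the four-dimensional homology class: I must make sure the surface bounded by $x_j$ inside the ball $D^3\times\{*\}$ really avoids every sheet of $F$, not just the slice $\hat a$, which is where I would invoke that $F\cap(D^3\times\{*\})=\hat a$. Once the weights are in hand, no relator needs separate attention: the relators $z\alpha_j z^{-1}\beta_j^{-1}$ have $z$-exponent $1-1=0$ directly, because $\alpha_j$ and $\beta_j$ are words in the $x_j$ alone, while the relators $x_j(A^b(x_j))^{-1}$ and $z(A^b(z))^{-1}$ are handled uniformly by $\varepsilon_z=\psi\phi$. As a consistency check, this matches the fact that Artin's automorphism sends each $t_k$ to a conjugate of a single generator and so preserves exponent sums, forcing $\psi\phi(A^b(x_j))=\psi\phi(x_j)$ and $\psi\phi(A^b(z))=\psi\phi(z)$.
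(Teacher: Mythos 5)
Your proof is correct, and it is a genuine (and more economical) variant of the paper's argument rather than a reproduction of it. The common core is the pair of homological weights: $\psi\phi(z)=t$ because $z$ is a meridian, and $\psi\phi(x_j)=1$ because $x_j$ is represented by a loop in the complement of the thickened Seifert surface $S\times[-0.5,0.5]$; the paper asserts the second weight in a single sentence, and your linking-number argument --- including the check that the surface bounded by $x_j$ inside $(D^3\times\{*\})\setminus\hat a$ avoids all of $F$ because $F\cap(D^3\times\{*\})=\hat a$ --- is a legitimate filling-in of exactly the point the paper leaves implicit. The divergence is in how the relators $r_{2g+1},\ldots,r_{4g+1}$ are treated. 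The paper never invokes $\phi(r_i)=1$; instead it proves $\psi\phi(A^b(x_j))=\psi\phi(x_j)$ and $\psi\phi(A^b(z))=\psi\phi(z)$ from the braid-theoretic fact that Artin's automorphism sends each generator $t_l$ to a conjugate of a single generator, so that $\psi\phi'(\mathcal{A}^{\sigma_m}(t_l))=\psi\phi'(t_l)$ (the paper's equation (\ref{eq6})), and then transfers this equality through $\phi'(\mathcal{A}^b(u_j))=\phi(A^b(x_j))$. You instead note that each $r_i$ is by definition a relator of the presentation (\ref{eq-*}), so $\psi\phi(r_i)=0$ for free, and that $\psi\phi$ agrees with the $z$-exponent-sum homomorphism $\varepsilon_z$ on the free group once the weights are fixed; this disposes of all $4g+1$ relators uniformly and, as you correctly observe, demotes the Artin computation to a consistency check. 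What each approach buys: yours is shorter and makes transparent that the lemma is purely a statement about the abelianization; the paper's shows the stronger fact that the $z$-exponent sums of the specific words $A^b(x_j)$ and $A^b(z)$ vanish as a structural property of Artin's automorphism, a fact about the words themselves that does not depend on their occurrence as relators of (\ref{eq-*}).
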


\begin{lemma} \label{lem4}
\begin{enumerate}[$(1)$]
\item
 For $i=1, \ldots, 4g+1$, $j=1, \ldots, 2g$, 
\begin{equation*}
\begin{split}
& \psi \phi\left(\frac{\partial r_i}{\partial x_j}  \right) \Big|_{t=-1}\\
& \quad = (-1)^{\nu_1} \mu_{i1}^j+ (-1)^{\nu_1+\nu_2} \mu_{i2}^j+ \cdots + (-1)^{\nu_1+\nu_2+\cdots +\nu_{n_i}} \mu_{i n_i}^j. 
\end{split}
\end{equation*}

\item
 For $i=1, \ldots, 4g+1$, 
\begin{equation*}
\psi\phi \left(\frac{\partial r_i}{\partial z}  \right)=0. 
\end{equation*}
\end{enumerate}
\end{lemma}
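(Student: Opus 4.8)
The plan is to compute both free derivatives directly from the normal form (\ref{rel2}) of each relation, once the images of the generators under $\psi\phi$ have been identified. The key preliminary observation is that $\psi\phi(z)=t$ while $\psi\phi(x_l)=1$ for every $l\in\{1,\ldots,2g\}$. Indeed, $z$ is represented by a meridian of $\hat{a}$, which is also a meridian of $F$, so it maps to the generator $t$ of $H_1(S^4\backslash F)$; on the other hand each $x_l$ is represented by a loop lying in the complement of the Seifert surface $S$, hence disjoint from $S$, so its linking number with $\hat{a}$ (equivalently with $F$) is zero and $\psi\phi(x_l)=t^{0}=1$. In particular, on the subring generated by $x_1,\ldots,x_{2g}$ the homomorphism $\psi\phi$ agrees with the augmentation $\varepsilon\colon \mathbb{Z}F(x_1,\ldots,x_{2g})\to\mathbb{Z}$, for which Fox's fundamental formula gives that $\varepsilon(\partial w/\partial x_j)$ equals the exponent sum of $x_j$ in $w$.

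For part (1) I would differentiate $r_i=z^{\nu_1}w_{i1}z^{\nu_2}w_{i2}\cdots z^{\nu_{n_i}}w_{in_i}$ by the product rule. Since $\partial z^{\nu}/\partial x_j=0$, every term coming from a power of $z$ vanishes and only the $w_{ik}$-factors survive:
\[
\frac{\partial r_i}{\partial x_j}=\sum_{k=1}^{n_i}\Big(z^{\nu_1}w_{i1}\cdots z^{\nu_{k-1}}w_{i,k-1}z^{\nu_k}\Big)\frac{\partial w_{ik}}{\partial x_j}.
\]
Applying $\psi\phi$ and using $\psi\phi(x_l)=1$, the prefactor in the $k$-th summand becomes $t^{\nu_1+\cdots+\nu_k}$, while $\psi\phi(\partial w_{ik}/\partial x_j)=\mu_{ik}^{j}$ by the augmentation formula above. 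Setting $t=-1$ turns $t^{\nu_1+\cdots+\nu_k}$ into $(-1)^{\nu_1+\cdots+\nu_k}$, which yields exactly the claimed alternating sum.

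For part (2) I would differentiate the same normal form with respect to $z$. Now $\partial w_{ik}/\partial z=0$, so only the powers of $z$ contribute, and using $\partial z^{\nu}/\partial z=(z^{\nu}-1)/(z-1)$ (valid for every integer $\nu$) together with $\psi\phi(x_l)=1$ gives
\[
\psi\phi\!\left(\frac{\partial r_i}{\partial z}\right)=\sum_{k=1}^{n_i}t^{\nu_1+\cdots+\nu_{k-1}}\,\frac{t^{\nu_k}-1}{t-1}=\frac{1}{t-1}\sum_{k=1}^{n_i}\big(t^{\nu_1+\cdots+\nu_k}-t^{\nu_1+\cdots+\nu_{k-1}}\big).
\]
This sum telescopes to $(t^{\nu_1+\cdots+\nu_{n_i}}-1)/(t-1)$, and Lemma \ref{lem3} (equation (\ref{eq621})) forces the total exponent $\nu_1+\cdots+\nu_{n_i}$ to vanish, so the expression is $0$ identically in $\Lambda$.

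The routine part is the Fox-calculus bookkeeping; the two ideas that make it work are the telescoping in part (2) combined with Lemma \ref{lem3}, and---the real conceptual input---the determination $\psi\phi(z)=t$, $\psi\phi(x_l)=1$. I expect the only place demanding genuine care to be this last point: one must check that each basis loop $x_l$ is actually null-homologous in $S^4\backslash F$ (not merely that the combinations $\alpha_j,\beta_j$ are), since it is precisely this that reduces $\psi\phi$ to the augmentation on words in the $x$'s and thereby produces the exponent sums $\mu_{ik}^{j}$ in the final formula.
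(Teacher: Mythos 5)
Your proposal is correct and follows essentially the same route as the paper: both rest on $\psi\phi(x_l)=1$, $\psi\phi(z)=t$ (with the same justification via loops in the complement of the Seifert surface), reduce $\psi\phi(\partial w_{ik}/\partial x_j)$ to the exponent sum $\mu_{ik}^j$, and derive part (2) from Lemma \ref{lem3}. The only cosmetic differences are bookkeeping: the paper expands $\partial w/\partial x_j$ explicitly with Kronecker deltas where you invoke the augmentation formula, and in part (2) it collapses the sum to $\psi\phi(\partial z^{\nu_1+\cdots+\nu_{n_i}}/\partial z)=\partial z^0/\partial z=0$ where you telescope via $(z^{\nu}-1)/(z-1)$.
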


\begin{proof}[Proof of Lemma \ref{lem3}]
For $i=1,\ldots, 2g$, obviously $r_i$ satisfies the property $(\ref{eq621})$. We consider the case $i=2g+1, \ldots, 4g+1$. 
We consider the construction of $\mathcal{S}_n(a,b)$ given in Section \ref{sec4}. 
The first homology group $H_1(D^3 \times \{*\} \backslash \hat{a})$ is generated by the homotopy class represented by a meridian of $\hat{a}$, and a meridian encircles an arc of $\hat{a}$ in a positive direction, so the generator $t$ equals $\psi \phi(z)$ for the letter $z$. 
Since $x_j$ $(j=1,\ldots, 2g)$ is represented by a closed path in $D^3 \times \{*\} \backslash S \times [-0.5, 0.5]$, where $S$ is the Seifert surface associated with $\hat{a}$, we see that $\psi \phi(x_j)=1 \in \langle t\rangle$. 
Further, for $t_1, \ldots, t_n \in \pi_1( D^2\backslash Q_n)=F(t_1, \ldots, t_{n})$, $\psi\phi'(t_l)=t$ and $\psi \phi'(\mathcal{A}^{\sigma_m})(t_l)=t$ ($l=1, \ldots,n$, $m=1, \ldots, n-1$). 
Hence, 
\begin{equation}\label{eq6}
\psi \phi'(\mathcal{A}^{\sigma_m}(t_l))=\psi \phi'(t_l) \quad (l=1, \ldots,n,\, m=1, \ldots, n-1),
\end{equation}
which implies $\psi\phi'(\mathcal{A}^b(u_j))=\psi \phi'(u_j)$. 
Since $\phi'(\mathcal{A}^b(u_j))=\phi(A^b(x_j))$ and $\phi'(u_j)=\phi(x_j)$, we see that 
\begin{equation}\label{eq5}
\psi \phi(A^b(x_j))=\psi \phi(x_j) \quad (j=1, \ldots, 2g).  
\end{equation}
Since $\psi\phi(x_j)=1$ and $\psi \phi(z)=t$, the left side (respectively, the right side) of (\ref{eq5}) is the sum of indices of $z$ in $A^b(x_j)$ (respectively, in $x_j$). Hence we see that $\nu_1+ \ldots +\nu_{n_i}=0$ for $r_i=x_{i'} (A^b(x_{i'}))^{-1}$ ($i=2g+1, \ldots, 4g, i'=i-2g$), which is the required formula. Similarly, the equation (\ref{eq6}) implies $\psi \phi (A^b(z))=\psi \phi (z)$, and we have the required formula for $r_{4g+1}$. 
\end{proof}

\begin{proof}[Proof of Lemma \ref{lem4}]
(1) By Fox calculus, 
\begin{equation}\label{eq6-11}
\frac{\partial r_i}{\partial x_j}=z^{\nu_1} \frac{\partial w_{i1}}{\partial x_j}+z^{\nu_1} w_{i1}z^{\nu_2} \frac{\partial w_{i2}}{\partial x_j} +z^{\nu_1} w_{i1}z^{\nu_2} w_{i2} z^{\nu_3} \frac{\partial w_{i3}}{\partial x_j} +\cdots, 
\end{equation}
for $i=1, \ldots, 4g+1$, and $j=1, \ldots, 2g$. 
We compute $\psi\phi \left( \partial w_{ik}/\partial x_j\right)$, as follows.
By Fox calculus, for a word $w=x_{j_1}^{\epsilon_1} x_{j_2}^{\epsilon_2} \cdots x_{j_l}^{\epsilon_{l}}$ $(\epsilon_1,\ldots, \epsilon_{l} \in \{+1, -1\})$, 
\begin{equation}\label{eq6a}
\frac{\partial w}{\partial x_j}=\epsilon_1 \delta_{j j_1} x_{j_1}^{\frac{\epsilon_1-1}{2}}+\epsilon_2 \delta_{j j_2} x_{j_1}^{\epsilon_1} x_{j_2}^{\frac{\epsilon_2-1}{2}}+\epsilon_3 \delta_{j j_3} x_{j_1}^{\epsilon_1} x_{j_2}^{\epsilon_2} x_{j_3}^{\frac{\epsilon_3-1}{2}}+\cdots,
\end{equation}
where $\delta_{j j_1}, \delta_{j j_2}, \ldots$ are the Kronecker delta (\cite[Exercise 12.5 (2)]{Kamada02}). 
By the argument in the proof of Lemma \ref{lem3}, $\psi\phi(x_j)=1$ $(j=1,\ldots, 2g)$ and $\psi\phi(z)=t$. 
Together with the equation $\psi\phi(x_j)=1$ $(j=1,\ldots, 2g)$, the formula (\ref{eq6a}) implies $\psi\phi \left( \partial w_{ik}/\partial x_j\right)=\mu_{ik}^j$ ($i=1, \ldots, 4g+1$, $j=1, \ldots, 2g$, $k=1, \ldots, \nu_i$). Since $\psi \phi (w_{ik})=1$ ($i=1, \ldots, 4g+1$, $k=1, \ldots, n_i$) from $\psi\phi(x_j)=1$ $(j=1,\ldots, 2g)$, together with $\psi\phi(z)=t$, by (\ref{eq6-11}), we see that 
\begin{equation}\label{eq7}
\psi\phi \left( \frac{\partial r_i}{\partial x_j}\right)=t^{\nu_1} \mu_{i1}^j +t^{\nu_1+\nu_2}\mu_{i2}^j +t^{\nu_1+\nu_2+\nu_3}\mu_{i3}^j +\cdots.
\end{equation}
By putting $t=-1$ into (\ref{eq7}), we have the required result.

(2) By Fox calculus, 
\begin{eqnarray*}
\frac{\partial r_i}{\partial z}&=&\frac{\partial z^{\nu_1}}{\partial z}+z^{\nu_1} w_{i1} \frac{\partial z^{\nu_2}}{\partial z} +z^{\nu_1} w_{i1}z^{\nu_2} w_{i2} \frac{\partial z^{\nu_3}}{\partial z} +\cdots \\
&& +z^{\nu_1} w_{i1}z^{\nu_2} w_{i2} \cdots z^{\nu_{n_i-1}} w_{i (n_i-1)} \frac{\partial z^{\nu_{n_i}}}{\partial z},  
\end{eqnarray*}
for $i=1, \ldots, 4g+1$. 

Since $\psi\phi (w_{ik})=1$ ($i=1, \ldots, 4g+1$, $k=1,\ldots, n_i$), $\psi\phi\left( \partial r_i/\partial z \right)$ equals 

\begin{eqnarray*}
\psi\phi \left( \frac{\partial z^{\nu_1}}{\partial z}+z^{\nu_1}  \frac{\partial z^{\nu_2}}{\partial z} +z^{\nu_1+\nu_2}  \frac{\partial z^{\nu_3}}{\partial z} +\cdots \right.
 \left. +z^{\nu_1+\nu_2+\cdots \nu_{n_i-1}} \frac{\partial z^{\nu_{n_i}}}{\partial z} \right), 
\end{eqnarray*}
which is $\psi\phi \left( \partial z^{\nu_1+\nu_2+\cdots +\nu_{n_i}}/\partial z\right)$. 
Since $\nu_1+\nu_2+\cdots+\nu_{n_i}=0$ by Lemma \ref{lem3}, 
\[
\psi\phi \left( \frac{\partial r_i}{\partial z}\right)=\psi\phi \left( \frac{\partial z^0}{\partial z}\right)=\psi\phi \left( \frac{\partial 1}{\partial z}\right)=0, \]
which is the required result. 
\end{proof}

\section{Proof of Theorem \ref{thm2-1}}\label{sec-proof}
In this section, we show Theorem \ref{thm2-1}. 
Our argument greatly relies on that due to Lin \cite{Lin}. 

\begin{proof}[Proof of Theorem \ref{thm2-1}]
We consider the presentation (\ref{eq-*}) of $\pi_1(S^4 \backslash \mathcal{S}_n(a,b))$: 
\begin{equation}\label{eq-5-27}
\left\langle 
\begin{matrix}  x_1, x_2, \ldots, x_{2g}, z \end{matrix}
 \left| 
\begin{matrix}
 \ z\alpha_j z^{-1}=\beta_j,&\\
 \ x_j=A^b(x_j),& \\
 z=A^b(z) &\,(j=1,2,\ldots, 2g)
\end{matrix} \right. 
\right\rangle,
\end{equation}
where $x_1, \ldots, x_{2g}$ are generators of $\pi_1(D^3 \times \{  *\} \backslash S)=F(x_1, \ldots, x_{2g})$ for the associated Seifert surface $S$ of $\hat{a} \subset D^3 \times \{ * \}$, and 
$\alpha_j$ and $\beta_j$ are the induced words associated with $S$ and its spine, and $z$ is represented by a meridian, and  
$A^b(x_j)$ and $A^b(z)$ are words associated with Artin's automorphism $\mathcal{A}^b$. 

Since $x_j$ $(j=1,\ldots, 2g)$ is represented by a closed path in $D^3 \times \{*\} \backslash S \times [-0.5, 0.5]$, $\psi\phi(x_j)=1$ $(j=1,\ldots, 2g)$, and hence $x_j$ $(j=1,\ldots, 2g)$ is an element of the commutator subgroup of $\pi_1(S^4 \backslash \mathcal{S}_n(a,b))$. 
Let $\rho: \pi_1(S^4 \backslash \mathcal{S}_n(a,b)) \to SU(2)$ be an irreducible metabelian representation. 
Since $\rho$ is metabelian, and since we consider up to conjugation, by simultaneous diagonalization we arrange that  
each $\rho(x_j)$ $(j=1,\ldots, 2g)$ is a diagonal matrix. We put 
\begin{equation}\label{eq-xj}
\rho(x_j)=\begin{pmatrix}
\lambda_j & 0 \\
0 & \bar{\lambda_j}
\end{pmatrix}
=\begin{pmatrix}
\lambda_j & 0 \\
0 & {\lambda}_j^{-1}
\end{pmatrix}, \quad j=1,\ldots, 2g, 
\end{equation}
where $\lambda_j$ $(j=1,\ldots, 2g)$ is an element in the unit circle in $\mathbb{C}$. 
Since $\rho$ is metabelian, by Lemma \ref{lem6}, $\rho(zx_jz^{-1})$ $(j=1,\ldots, 2g)$ is also a diagonal matrix. 
Together with the condition that $\rho$ is irreducible, by Lemma \ref{lem6a}, by taking conjugation if necessary, we have  
\begin{equation}\label{eq-z}
\rho(z)=\begin{pmatrix}
 0 & 1  \\
 -1 & 0
 \end{pmatrix}. 
 \end{equation}
We see that $\rho(z^2)=-E$, where $E$ is the unit matrix. Hence, for an integer $\nu$, 
 \[
 \rho(z^\nu x_j z^{-\nu})=\begin{cases}\begin{pmatrix}
{\lambda}_j^{-1} & 0 \\
0 & {\lambda}_j
\end{pmatrix} & \text{if $\nu$ is odd}\\
\\
\begin{pmatrix}
{\lambda}_j & 0 \\
0 & {\lambda}_j^{-1}
\end{pmatrix} & \text{ if $\nu$ is even, }\\
\end{cases}
 \]
that is, 
\[
 \rho(z^\nu x_j z^{-\nu})=\begin{pmatrix}
{\lambda}_j^{(-1)^\nu} & 0 \\
0 & {\lambda}_j^{-(-1)^\nu}
\end{pmatrix} \quad  (\nu \in \mathbb{Z}),
\]
for $j=1,\ldots, 2g$.
 We put these matrices into relations of (\ref{eq-5-27}), which are $r_1, \ldots, r_{4g+1}$ with the words of (\ref{rel2}). 
The relation $r_i$ $(i=1,\ldots, 4g+1)$ is written as follows:
\begin{eqnarray*}%\label{rel2}
r_i& =&(z^{\nu_1}w_{i1} z^{-\nu_1})(z^{\nu_1+\nu_2}w_{i2}z^{-(\nu_1+\nu_2)}) \cdots \\
&& \ \cdot (z^{\nu_1+ \cdots +\nu_{n_i}} w_{i n_i}z^{-(\nu_1+ \cdots +\nu_{n_i})})z^{\nu_1 +\cdots +\nu_{n_i}}. 
\end{eqnarray*}
Since $\nu_1+\cdots +\nu_{n_i}=0$ by Lemma \ref{lem3}, 
\begin{eqnarray*}%\label{rel2}
r_i =(z^{\nu_1}w_{i1} z^{-\nu_1})(z^{\nu_1+\nu_2}w_{i2}z^{-(\nu_1+\nu_2)}) \cdots 
(z^{\nu_1+ \cdots +\nu_{n_i}} w_{i n_i}z^{-(\nu_1+ \cdots +\nu_{n_i})}). 
\end{eqnarray*}
Recalling that $\mu_{ik}^j$ is the sum of indices of $x_j$ in the word $w_{ik}$ in $r_i$, and seeing the diagonal elements of the matrix, the relation $\rho(r_i)=E$ is equivalent to:
  \begin{equation}\label{eq-2}
\lambda_1^{a_{i 1}} \lambda_2^{a_{i2}} \cdots \lambda_{2g}^{a_{i (2g)}}=1, 
   \end{equation}
where 
\[
a_{i j}=(-1)^{\nu_1} \mu_{i 1}^j+(-1)^{\nu_1+\nu_2} \mu_{i 2}^j+ \cdots+(-1)^{\nu_1+\nu_2+\cdots +\nu_{n_i}} \mu_{i n_i}^j
\]
 $(i=1, \ldots, 4g+1, j=1, \ldots, 2g)$. 

Put $A=(a_{ij})$ $(i=1, \ldots, 4g+1, j=1, \ldots, 2g)$. 
By Lemma \ref{lem4} (1) and (2), $a_{i j}=\psi\phi \left(  \partial r_i/\partial x_j \right)|_{t=-1}$, $\psi\phi \left(  \partial r_i/\partial z \right)=0$; 
and it follows that 
\begin{equation}\label{eqA}
\begin{pmatrix}A \ \mathbf{o}\end{pmatrix}=A(t)|_{t=-1},
\end{equation}
where $A(t)$ is the Alexander matrix, and $\mathbf{o}$ is the zero column vector with $4g+1$ elements.
Since $\lambda_j$ is an element of the unit circle in $\mathbb{C}$, $\lambda_j=e^{\sqrt{-1}\eta_j}$ for some $\eta_j \in \mathbb{R}$ ($j=1,\ldots, 2g$); and   
the equation $(\ref{eq-2})$ is equivalent to: 
\[
a_{i1}\eta_1+a_{i 2}\eta_2+\cdots+a_{i (2g)} \eta_{2g}=0 \pmod {2\pi}, 
\]
$(i=1,\ldots, 4g+1)$, and the system of these equations is described by: 
\begin{equation}\label{eq-a}
A \vec{\eta}=\mathbf{o} \pmod{2\pi}, 
\end{equation}
where $\vec{\eta}$ is a column vector $\vec{\eta}=( \eta_1, \ldots, \eta_{2g})^T \in \mathbb{R}^{2g}$. 
By elementary transformations for integer matrices, $A$ is transformed to the Smith normal form. 
Now, the knot determinant of $F=\mathcal{S}_n(a,b)$, denoted by $\Delta(F)|_{t=-1}$, is the nonnegative generator of the $\mathbb{Z}$-ideal $\{ f(-1) \mid f(t) \in \Delta(F)\}$, where $\Delta(F)$ is the Alexander ideal. Further, $\Delta(F)|_{t=-1}$ is odd, since the knot group of $F$ is the normal subgroup of the classical knot group of $\hat{a}$, whose knot determinant is odd; see also \cite[Proposition 5.8]{Joseph}.  
Hence (\ref{eqA}) implies that the Smith normal form of $A$ is as follows: 
\begin{equation*}%\label{eqS}
PAQ=\begin{pmatrix}
d_1 & 0 & \cdots & 0\\
0 & d_2 &\\
0 & 0 & \ddots \\
\vdots &&  & d_{2g} \\
0 & 0 & \cdots & 0\\
& \vdots\\
0 & 0 & \cdots & 0
\end{pmatrix},
\end{equation*}
where $P$ and $Q$ are unimodular matrices associated with the transformations, and $d_1, \ldots, d_{2g}$ are positive odd integers such that $d_{j+1}$ is a multiple of $d_{j}$ $(j=1, \ldots, 2g-1$), and  
\begin{equation*}%\label{eq-d}
d_1 \cdots d_{2g}=\Delta(F)|_{t=-1}.  
\end{equation*}
Solutions of the equation $PAQ \vec{\eta}{\,'}=\mathbf{o} \pmod{1}$ $(\vec{\eta}{\,'} \in \mathbb{R}^{2g})$ are given by 
\begin{equation}\label{eq-0628-2021}
\vec{\eta}{\,'}=\left(\frac{n_1}{d_1}, \ldots, \frac{n_{2g}}{d_{2g}}\right)^T \pmod{1}, 
\end{equation}
where $n_j =0, 1,\ldots, d_j-1$ $(j=1,\ldots, 2g)$. 
Hence solutions of (\ref{eq-a}) are given by $(2 \pi)Q\vec{\eta}{\,'}$ for $\vec{\eta}{\,'}$ of (\ref{eq-0628-2021}), 
and the 
number of solutions is $d_1\cdots d_{2g}=\Delta(F)|_{t=-1}$. 

For the trivial solution of (\ref{eq-a}), the set of associated matrices $\{\rho(x_1), \ldots, \rho(x_{2g})\}$ is $\{E\}$; hence the image of $\rho$ is generated by $\rho(z)$, and $\rho$ is reducible, which is a contradiction. For any non-trivial solution of (\ref{eq-a}), since each $d_j$ is odd $(j=1, \ldots, 2g)$, $\{\rho(x_1), \ldots, \rho(x_{2g})\}$ is neither $\{E\}$ nor $\{-E\}$ nor $\{E, -E\}$; hence $\rho(x_j) \neq \pm E$ for some $j$, and $\rho(x_j)$ and $\rho(z)$ don't have a common eigenvector, so $\rho$ is irreducible. 
Hence irreducible representations are given by non-trivial solutions of (\ref{eq-a}). 
We consider a non-trivial solution $\vec{\eta}_0=(\eta_1, \ldots, \eta_{2g})$ of (\ref{eq-a}). Then, $-\vec{\eta}_0=(-\eta_1, \ldots, -\eta_{2g})$ is also a solution.  The associated matrices $\rho(x_1), \ldots, \rho(x_{2g})$ are 
\begin{equation*}\label{eq-m}
\begin{cases} \begin{pmatrix}
\lambda_{1} & 0\\
0 & \lambda_1^{-1}
\end{pmatrix}, \ldots, 

\begin{pmatrix}
\lambda_{2g} & 0\\
0 & \lambda_{2g}^{-1}
\end{pmatrix} & \text{for $\vec{\eta}_0=(\eta_1, \ldots, \eta_{2g})$} \\ 
\\

\begin{pmatrix}
\lambda_{1}^{-1} & 0\\
0 & \lambda_1
\end{pmatrix}, \ldots, 
\begin{pmatrix}
\lambda_{2g}^{-1} & 0\\
0 & \lambda_{2g}
\end{pmatrix} & \text{for $-\vec{\eta}_0=(-\eta_1, \ldots, -\eta_{2g})$}

\end{cases}
\end{equation*}
($\lambda_j=e^{\sqrt{-1} \eta_j}$, $j=1,\ldots, 2g$), and they are simultaneously conjugate by $\rho(z)$ (\ref{eq-z}).  
And since the diagonalization of $\rho(x_j)$ $(j=1, \ldots, 2g)$ is 
\[
\begin{pmatrix}
\lambda_{j} & 0\\
0 & \lambda_{j}^{-1}
\end{pmatrix} \text{or} 
\begin{pmatrix}
\lambda_{j}^{-1} & 0\\
0 & \lambda_{j}
\end{pmatrix}, 
\]
representations associated with solutions other than $-\vec{\eta}_0$ are not conjugate to that associated with $\vec{\eta}_0$. 
Thus we see that the number of conjugacy classes of irreducible metabelian $SU(2)$-representations is half of $(\Delta(F)|_{t=-1})-1$, which is the required result.
\end{proof}

\begin{lemma}\label{lem6}
We use the notations in the proof of Theorem \ref{thm2-1}. 
The $SU(2)$-representation $\rho$ is metabelian, and $x_j$ is an element of the commutator subgroup, and $\rho(x_j)$ is a diagonal matrix $(\ref{eq-xj})$.  
Then, $\rho(zx_jz^{-1})$ is also a diagonal matrix $(j=1,\ldots, 2g)$. 
\end{lemma}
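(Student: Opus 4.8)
The plan is to combine two structural facts: that the commutator subgroup of $G=\pi_1(S^4\backslash \mathcal{S}_n(a,b))$ is a normal subgroup, and that $\rho([G,G])$ is abelian by the metabelian hypothesis. First I would note that, since $x_j$ lies in $[G,G]$ by assumption and $[G,G]$ is normal in $G$, the conjugate $zx_jz^{-1}$ again lies in $[G,G]$. Hence $\rho(zx_jz^{-1})$ belongs to the abelian group $\rho([G,G])$, and in particular it commutes with each $\rho(x_k)$ $(k=1,\ldots,2g)$, which also lie in $\rho([G,G])$.

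The second step is to upgrade ``commutes with'' to ``is diagonal'' by elementary linear algebra, splitting into two cases according to the eigenvalues appearing in $(\ref{eq-xj})$. Suppose some $\rho(x_{k_0})$ has two distinct diagonal entries, i.e.\ $\rho(x_{k_0})\neq \pm E$. Then the centralizer in $GL(2,\mathbb{C})$ of the diagonal matrix $\rho(x_{k_0})$ with distinct entries consists exactly of the diagonal matrices, so from $\rho(zx_jz^{-1})\rho(x_{k_0})=\rho(x_{k_0})\rho(zx_jz^{-1})$ I conclude that $\rho(zx_jz^{-1})$ is diagonal. In the remaining degenerate case every $\rho(x_k)$ equals $\pm E$; then $\rho(x_j)=\pm E$ is a scalar, hence central, so $\rho(zx_jz^{-1})=\rho(z)\rho(x_j)\rho(z)^{-1}=\rho(x_j)$, which is diagonal. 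In either case $\rho(zx_jz^{-1})$ is diagonal, giving the claim for all $j=1,\ldots,2g$.

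I expect the only point needing care to be this case split: the commuting relation alone forces diagonality only when at least one $\rho(x_k)$ is a regular (non-central) element of the diagonal torus, so the degenerate case where all $\rho(x_k)\in\{\pm E\}$ must be treated separately, via centrality of scalar matrices. Once that distinction is made, the argument is a direct consequence of the normality of $[G,G]$ together with the metabelian hypothesis, and no serious obstacle remains.
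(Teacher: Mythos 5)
Your proof is correct, and it is a genuine (if mild) variant of the paper's argument rather than a reproduction of it. The paper never invokes normality of $[G,G]$ explicitly: instead it factors $\rho(zx_jz^{-1})=\rho(zx_jz^{-1}x_j^{-1})\,\rho(x_j)$, observes that the commutator $zx_jz^{-1}x_j^{-1}$ lies in $[G,G]$ so that its image commutes with $\rho(x_j)$ by metabelianity, and then carries out an explicit $2\times 2$ computation with $\rho(zx_jz^{-1}x_j^{-1})=\left(\begin{smallmatrix}\alpha & -\bar{\beta}\\ \beta & \bar{\alpha}\end{smallmatrix}\right)$: if $\beta=0$ the product of two diagonal matrices is diagonal, and if $\beta\neq 0$ the commutation relation forces $\lambda_j=\bar{\lambda}_j$, i.e.\ $\rho(x_j)=\pm E$, so conjugation by $\rho(z)$ is trivial. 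You instead put $zx_jz^{-1}$ itself into $[G,G]$ via normality, so that $\rho(zx_jz^{-1})$ commutes with \emph{every} $\rho(x_k)$, and then replace the paper's hands-on computation by the standard centralizer fact for a regular diagonal element of $SU(2)$. What your route buys: it is more conceptual, needs no matrix calculation, and a single non-central $\rho(x_{k_0})$ diagonalizes all the conjugates $\rho(zx_jz^{-1})$ simultaneously; your case split (some $\rho(x_{k_0})\neq\pm E$ versus all $\rho(x_k)=\pm E$) is exactly the dichotomy the paper's computation produces in the disguise $\beta=0$ versus $\lambda_j=\pm1$, and you correctly handle the degenerate branch by centrality of scalars. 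What the paper's route buys: it is per-$j$ and entirely self-contained, using only commutation with $\rho(x_j)$ itself and elementary entry-by-entry comparison, which keeps the lemma independent of the behavior of the other generators. Both arguments, rightly, use only metabelianity and not irreducibility (that enters only in Lemma \ref{lem6a}), so there is no gap in your proposal.
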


\begin{proof}
Put 
\[
\rho(zx_jz^{-1}x_j^{-1})=\begin{pmatrix}
\alpha & -\bar{\beta}\\
\beta & \bar{\alpha}
\end{pmatrix} \in SU(2),
\]
where $\alpha$, $\beta$ are elements in $\mathbb{C}$ with $|\alpha|^2+|\beta|^2=1$. 
If $\beta=0$, then $\rho(zx_jz^{-1}x_j^{-1})$ is a diagonal matrix. Since $\rho(x_j)$ is a diagonal matrix, it follows that $\rho(z x_j z^{-1})=\rho(z x_j z^{-1}x_j^{-1})\rho(x_j)$ is also a diagonal matrix. 
We consider the case $\beta \neq 0$. 
Since $x_j$ is an element of the commutator subgroup and $\rho$ is metabelian, $\rho(zx_jz^{-1}x_j^{-1}) \rho(x_j)=\rho(x_j) \rho(zx_jz^{-1}x_j^{-1})$; hence
\begin{eqnarray*}
\begin{pmatrix}
\alpha & -\bar{\beta}\\
\beta & \bar{\alpha} 
\end{pmatrix}
\begin{pmatrix}
\lambda_j & 0 \\
0 & \bar{\lambda_j} 
\end{pmatrix}
=\begin{pmatrix}
\lambda_j & 0 \\
0 & \bar{\lambda_j} 
\end{pmatrix}
\begin{pmatrix}
\alpha & -\bar{\beta}\\
\beta & \bar{\alpha} 
\end{pmatrix},\\
\end{eqnarray*}
thus 
\[
\begin{pmatrix}
\alpha\lambda_j & -\bar{\beta} \bar{\lambda_j}\\
\beta\lambda_j & \bar{\alpha} \bar{\lambda_j}
\end{pmatrix}
=
\begin{pmatrix}
\alpha\lambda_j & -\bar{\beta} \lambda_j\\
\beta\bar{\lambda_j} & \bar{\alpha} \bar{\lambda_j}
\end{pmatrix}.
\]
This implies that $\lambda_j=\bar{\lambda_j}$, and it follows that $\lambda_j=\pm 1$; hence $\rho(x_j)=\pm E$, where $E$ is the unit matrix. Then, $\rho(z x_j z^{-1})=\rho(x_j)$, which is a diagonal matrix. 
\end{proof}

\begin{lemma}\label{lem6a}
We use the notations in the proof of Theorem \ref{thm2-1}. 
Then, by taking conjugation, we have $\rho(z)=\begin{pmatrix}
0 & 1 \\
-1 & 0
\end{pmatrix}$. 
\end{lemma}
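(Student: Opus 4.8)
The plan is to write $\rho(z)$ as a general element of $SU(2)$ and use Lemma \ref{lem6} together with irreducibility to force $\rho(z)$ to be anti-diagonal, after which a diagonal conjugation fixes its phase. Concretely, I would set
\[
\rho(z)=\begin{pmatrix} p & -\bar{q}\\ q & \bar{p}\end{pmatrix},\qquad |p|^2+|q|^2=1,
\]
and compute the conjugate $\rho(z)\rho(x_j)\rho(z)^{-1}$ using the diagonal form $(\ref{eq-xj})$ of $\rho(x_j)$. The off-diagonal entries of this product are $p\bar{q}(\lambda_j-\lambda_j^{-1})$ and $\bar{p}q(\lambda_j-\lambda_j^{-1})$, so Lemma \ref{lem6}, which guarantees that $\rho(zx_jz^{-1})$ is diagonal, forces $p\bar{q}(\lambda_j-\lambda_j^{-1})=0$ for every $j$.

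Next I would invoke irreducibility to locate a suitable index. If $\lambda_j=\lambda_j^{-1}$ for all $j$, then each $\rho(x_j)=\pm E$ and the image of $\rho$ is generated by $\rho(z)$ together with $\pm E$, hence abelian and in particular reducible, contradicting the hypothesis. Thus there is some $j_0$ with $\lambda_{j_0}\neq\lambda_{j_0}^{-1}$, i.e.\ $\lambda_{j_0}\neq\pm 1$; for this index the relation above gives $p\bar{q}=0$, hence $pq=0$. If $q=0$ then $\rho(z)$ is diagonal and all of $\rho(x_1),\ldots,\rho(x_{2g}),\rho(z)$ share the eigenvector $(1,0)^T$, again making $\rho$ reducible. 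Therefore $p=0$, so $\rho(z)=\begin{pmatrix}0 & -\bar{q}\\ q & 0\end{pmatrix}$ with $|q|=1$.

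Finally I would normalize the phase of $q$. Writing $q=e^{\sqrt{-1}\theta}$ and conjugating by the diagonal matrix $D=\mathrm{diag}\bigl(e^{\sqrt{-1}(\theta+\pi)/2},e^{-\sqrt{-1}(\theta+\pi)/2}\bigr)\in SU(2)$, a direct computation gives $D\rho(z)D^{-1}=\begin{pmatrix}0 & 1\\ -1 & 0\end{pmatrix}$. Crucially, $D$ is diagonal, so conjugation by $D$ fixes each $\rho(x_j)$ in the diagonal form $(\ref{eq-xj})$; hence this last conjugation is compatible with the normalizations already made in the proof of Theorem \ref{thm2-1}.

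The step I expect to require the most care is the bookkeeping that keeps the two normalizations consistent: one must verify that the conjugation used to standardize $\rho(z)$ does not disturb the simultaneous diagonalization of the $\rho(x_j)$. Restricting to diagonal conjugations resolves this, but it is the point where the argument could go wrong if one allowed an arbitrary element of $SU(2)$.
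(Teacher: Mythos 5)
Your proposal is correct and follows essentially the same route as the paper: write $\rho(z)$ as a general element of $SU(2)$, use Lemma \ref{lem6} to force the off-diagonal entries $p\bar{q}(\lambda_j-\lambda_j^{-1})$ of $\rho(zx_jz^{-1})$ to vanish, rule out $q=0$ and the case $\lambda_j=\pm 1$ for all $j$ by irreducibility, and then kill the phase of $q$ by conjugating with a diagonal unitary (the paper's $U$ plays exactly the role of your $D$, and the paper likewise notes that this diagonal conjugation preserves the diagonal form of the $\rho(x_j)$). Your only deviation is a slight tightening of the case analysis --- first producing an index $j_0$ with $\lambda_{j_0}\neq\pm 1$ and only then concluding $p\bar{q}=0$ --- which is the same argument, organized marginally more carefully than the paper's per-$j$ trichotomy.
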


\begin{proof}
Put \[
\rho(z)=\begin{pmatrix}
\alpha & -\bar{\beta}\\
\beta & \bar{\alpha}
\end{pmatrix},
\]
where $\alpha$, $\beta \in \mathbb{C}$ with $|\alpha|^2+|\beta|^2=1$. 
Since each $\rho(z x_j z^{-1})$ $(j=1,\ldots, 2g)$ is a diagonal matrix by Lemma \ref{lem6}, the result of the computation
\begin{eqnarray*}
\rho(z x_j z^{-1})
&=&
\begin{pmatrix}
\alpha & -\bar{\beta}\\
\beta & \bar{\alpha}
\end{pmatrix}
\begin{pmatrix}
\lambda_j & 0 \\
0 & \bar{\lambda_j} 
\end{pmatrix}
\begin{pmatrix}
\bar{\alpha} & \bar{\beta}\\
-\beta & \alpha
\end{pmatrix}\\
&=& 
\begin{pmatrix}
|\alpha|^2 \lambda_j+|\beta|^2 \bar{\lambda_j} & \alpha\bar{\beta}(\lambda_j-\bar{\lambda_j}) \\
-\bar{\alpha}{\beta}(\bar{\lambda_j}-{\lambda}_j) & 
|\alpha|^2 \bar{\lambda_j}+|\beta|^2 {\lambda}_j
\end{pmatrix}
\end{eqnarray*}
implies that $\alpha=0$, or $\beta=0$, or $\lambda_j=\bar{\lambda_j}$ for $j=1,\ldots, 2g$. 
If $\beta=0$, then $\rho(z)$ is a diagonal matrix; since each $\rho(x_j)$ is also a diagonal matrix, we see that $\rho$ is reducible, which is a contradiction. If $\lambda_j=\bar{\lambda_j}$ for each $j$, then $\rho(x_j)=E$ or $-E$, and the image of $\rho$ is generated by $\rho(z)$ or $\pm \rho(z)$; hence $\rho$ is reducible, which is a contradiction. If $\alpha=0$, then \[
\rho(z)=\begin{pmatrix}
0 & -\bar{\beta}\\
\beta & 0
\end{pmatrix}, \]
where $\beta$ is in the unit circle in $C$. 
Put $\beta=e^{i\theta}$ $(\theta \in \mathbb{R})$, and take 
\[
U=\begin{pmatrix}
ie^{-\frac{i\theta}{2}} & 0 \\
0 & -ie^{\frac{i\theta}{2}}
\end{pmatrix} \in SU(2). \]
Since \[
U^{-1} \rho(z) U=\begin{pmatrix}
0 & 1\\
-1 & 0
\end{pmatrix},
\]
by taking conjugation by $U$, we have new matrices $\rho(x_j)$ $(j=1,\ldots, 2g)$, which are diagonal, and we have the new $\rho(z)$ that is
\[
\rho(z)=\begin{pmatrix}
0 & 1\\
-1 & 0
\end{pmatrix}.
\]
\end{proof}

\section{Quandle colorings and proofs of Theorems \ref{thm2-2} and \ref{thm2-3} and Corollary \ref{cor2-4}}\label{sec7}

In this section, we review quandle colorings \cite{CKS, EN, HK, Joyce, P}, and we show Theorems \ref{thm2-2} and \ref{thm2-3} and Corollary \ref{cor2-4}. We use arguments related to \cite{P}. 
Recall that we treat classical knots or surface-knots that are oriented. 

\subsection{Quandles}

 A {\it quandle} is a set $X$ equipped with a binary operation $*: X \times X \to X$ satisfying the following axioms. 
 
 \begin{enumerate}
 \item (Idempotency)
 For any $x \in X$, $x*x=x$. 
 
 \item (Right invertibility)
 For any $y,z \in X$, there exists a unique $x \in X$ such that $x*y=z$. 
 
 \item (Right self-distributivity)
 For any $x,y,z \in X$, $(x*y)*z=(x*z)*(y*z)$. 
 \end{enumerate}
 
Let $K$ be a classical knot and let $F$ be a surface-knot, respectively. 
 We review quandle colorings for $K$ or $F$. 
Let $D$ be a diagram of $K$ or $F$. 
Here, we obtain a diagram by a method as follows. 
We take the image of $K$ or $F$ by a generic projection to $\mathbb{R}^2$ or $\mathbb{R}^3$. In order to equip the image with crossing information, we break each under-arc or under-sheet into two pieces around each crossing or double point curve. A {\it diagram} is the set consisting of resulting arcs or compact surfaces, which are called {\it over-arcs/under-arcs}, or simply {\it arcs} for $K$, or,  {\it over-sheets/under-sheets}, or simply {\it sheets} for $F$. 
 For a diagram $D$, let $B(D)$ be the set of arcs or sheets of $D$. 
An {\it $X$-coloring} of $D$ is a map $C: B(D) \to X$ satisfying the coloring rule around each crossing or double point curve as shown in Fig. \ref{fig-quandle}. For an $X$-coloring $C$, the image of an arc or sheet by $C$ is called a {\it color}. 

\begin{figure}[ht]
\includegraphics[height=4cm]{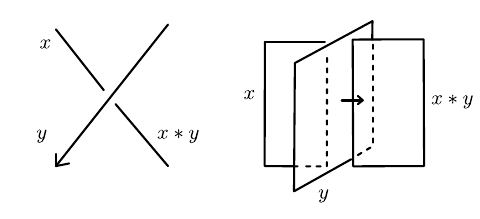}
\caption{The quandle coloring rule, where $x,y \in X$. We denote the orientation of the over-sheet by its normal vector. The orientation of under-arcs or under-sheets is arbitrary.}\label{fig-quandle}
\end{figure}

Let $p>1$ be an integer. A {\it dihedral quandle} $R_p$ is a set $R_p=\mathbb{Z}/p\mathbb{Z}$ with a binary operation 
\[
x*y=2y-x,
\]
where $x,y \in R_p$. 
And an {\it Alexander quandle} is a module over $\Lambda=\mathbb{Z}[t, t^{-1}]$ with a binary operation 
\[
x *y=tx+(1-t)y, 
\]
where $x, y$ are elements of the Alexander quandle. 
In particular, we consider an Alexander quandle $\Lambda$. 
The dihedral quandle $R_p$ is obtained from the Alexander quandle $\Lambda$ by putting $t=-1$ and taking a quotient modulo $p$. 

\subsection{The Alexander matrix associated with $\Lambda$-colorings}
Let $K$ and $F$ be a classical knot and a surface-knot, respectively. 
Let $D$ be a diagram. 
For an Alexander quandle $\Lambda$, we consider $\Lambda$-colorings of $D$. 
Let $m$ be the number of arcs or sheets of $D$. 
Let $x_1, x_2, \ldots,x_m$ be variables in $\Lambda$ such that each $x_j$ corresponds to the $j$th arc or sheet $(j=1, \ldots, m)$. 
At each crossing or double point curve, we have a quandle relation $x_i*x_j=x_k$ for some $i,j,k \in \{1, \ldots, m\}$; since we consider an Alexander quandle relation, we have an equation 
$tx_i+(1-t)x_j=x_k$, that is, $tx_i+(1-t)x_j-x_k=0$. Let $n$ be the number of crossings or double point curves of $D$. Then, the system of equations associated with crossings or double point curves is described by  $A'(t) \cdot \mathbf{x}=\mathbf{o}$, where $\mathbf{x}=(x_1, x_2, \ldots, x_m)^T$, and $A'(t)$ is an $(n,m)$-matrix over $\Lambda$; and each solution $\mathbf{x} \in \Lambda^m$ presents each $\Lambda$-coloring. We call $A'(t)$ the {\it Alexander matrix associated with $\Lambda$-colorings} of $D$. 
We remark that $A'(t)$ is \lq\lq almost'' equivalent to the Alexander matrix obtained from the Wirtinger presentation associated with $D$ (Lemma \ref{lem-wirtinger}).

\begin{lemma}\label{lem-wirtinger}
Let $K$ and $F$ be a classical knot and a surface-knot, respectively. Let $D$ be a diagram. 
Let $G$ be the Wirtinger presentation of the knot group associated with $D$. 
Let $A(t)$ (respectively, $A'(t)$) be the Alexander matrix associated with $G$ (respectively, $\Lambda$-colorings of $D$). 
Then, $A'(t)$ is equivalent to $A(t^{-1})$ by operations of presentation matrices. 
\end{lemma}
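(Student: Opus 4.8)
The plan is to compare $A(t)$ and $A'(t)$ one crossing (or double point curve) at a time, exploiting that both matrices are built with one column per arc/sheet and one row per crossing/double point curve, and that the two index sets of columns literally coincide. First I would write down the Wirtinger presentation attached to $D$: its generators $x_1,\dots,x_m$ are meridians of the arcs (or sheets) of $D$, and at a crossing (or double point curve) of sign $\epsilon\in\{+1,-1\}$ with over-arc/over-sheet $x_j$ and under-arcs/under-sheets $x_i$ (incoming) and $x_k$ (outgoing) the relation reads $r=x_k^{-1}x_j^{\epsilon}x_ix_j^{-\epsilon}$. For the coloring side I would use the description already given in this section: the Alexander-quandle relation $x_i*x_j=x_k$ at that crossing contributes the linear row $tx_i+(1-t)x_j-x_k$ to $A'(t)$.

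Next I would carry out the single Fox-calculus computation that underlies everything. Using only $\partial x_a/\partial x_b=\delta_{ab}$ and the product rule, and then applying the abelianization under which every meridian $x_\ell$ maps to $t$, the relation $r$ above yields a row of $A(t)$ supported on the three columns $i,j,k$. A direct calculation gives, up to multiplication of the row by a unit $\pm t^{\pm1}$ of $\Lambda$, the entries $(t^{\epsilon},\,1-t^{\epsilon},\,-1)$ in columns $(i,j,k)$. In particular the $A(t)$-row has exactly the shape of a linearized Alexander-quandle relation, but with the exponent of $t$ governed by the crossing sign.

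Then I would match this with the $A'(t)$-row. The coloring rule of Fig. \ref{fig-quandle} assigns the labels $i,j,k$ by the co-orientation of the over-arc/over-sheet, and comparing this bookkeeping with the incoming/outgoing convention of the Wirtinger relation shows that the assignment used for $A'(t)$ is the one obtained from the opposite handedness; concretely, at every crossing the $A'(t)$-row equals, up to the unit $\pm t^{\pm1}$, the $A(t)$-row after substituting $t\mapsto t^{-1}$. Since this replacement is uniform across all crossings (for both the classical and the surface case, the only difference being the dimension of the double point set), the row of $A'(t)$ coincides, up to a unit multiple, with the corresponding row of $A(t^{-1})$. Multiplying rows by units is operation $(2)$ and reordering them is operation $(1)$ on presentation matrices, so $A'(t)$ and $A(t^{-1})$ are equivalent.

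The main obstacle I expect is precisely the convention chase in the last step: pinning down, for each crossing sign, which under-arc/under-sheet plays the role of $x_i$ in the quandle rule versus the incoming strand in the Wirtinger relation, and tracking the resulting unit $\pm t^{\pm1}$ so that the substitution is seen to be $t\mapsto t^{-1}$ rather than the identity. Once the orientation conventions of the coloring figure and of the chosen meridian generator $t$ are fixed consistently, this becomes a finite, crossing-by-crossing verification, and the equivalence of presentation matrices follows at once.
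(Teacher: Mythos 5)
Your proposal is correct and follows essentially the same route as the paper: a crossing-by-crossing Fox-calculus computation showing that each row of $A'(t)$ is a unit multiple of the corresponding row of $A(t^{-1})$, after which the equivalence follows from the elementary operations on presentation matrices. The only cosmetic difference is that the paper labels both coloring and Wirtinger figures by the same triple $(i,j,k)$, so the negative-crossing word is the conjugate inverse $x_i r^{-1} x_i^{-1}$ of the positive one and a single computation (plus the degenerate cases where $i,j,k$ are not all distinct, as in (\ref{eq-0616-02})) suffices with no sign-dependence, whereas you carry the crossing sign $\epsilon$ and must resolve the resulting $(t^{\epsilon},\,1-t^{\epsilon},\,-1)$ rows through the co-orientation convention---the convention chase you flag as the main obstacle is precisely what the paper's uniform labeling dissolves, making the substitution $t\mapsto t^{-1}$ fall out of one calculation.
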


Before the proof, 
we review a Wirtinger presentation of the knot group of $K$ or $F$. Let $m$ be the number of arcs or sheets of $D$. 
Then, the knot group of $K$ or $F$ has the presentation as follows.  
Let $\mathbf{p}$ be the projection to $\mathbb{R}^2$ or $\mathbb{R}^3$ associated with $D$. We take generators  $x_1, \ldots, x_m$ such that each $x_j$ $(j=1, \ldots, m)$  
is presented by a closed path that starts from a base point, goes straight to the preimage by $\mathbf{p}$ of the $j$th arc or sheet, encircles it once in a positive direction, and returns straight to the base point; here, we regard the preimage $\mathbf{p}^{-1}(\alpha)$ of a sheet $\alpha$ as the product of an arc in a 3-space and an interval in the fourth direction, and the closed path goes around an arc which forms $\mathbf{p}^{-1}(\alpha)$. 
At each crossing or double point curve $c$ as shown in Fig. \ref{fig-wirtinger}, we take a relation $x_i^{-1} x_j x_k x_j^{-1}$ (respectively, $x_i x_j x_k^{-1} x_j^{-1}$) if $c$ is a positive crossing or the product of a positive crossing and an interval (respectively, a negative crossing or the product of a negative crossing and an interval) as shown in the left (respectively, the right) of Fig. \ref{fig-wirtinger}. Let $n$ be the number of crossings or double point curves of $D$, and let $r_1, \ldots, r_n$ be relations associated with crossings or double point curves. The {\it Wirtinger presentation of the knot group associated with $D$} is the presentation 
\begin{equation}\label{eq-wirtinger}
\langle x_1, \ldots, x_m \mid r_1, \ldots, r_n \rangle. 
\end{equation}

\begin{figure}[ht]
\includegraphics[height=4.3cm]{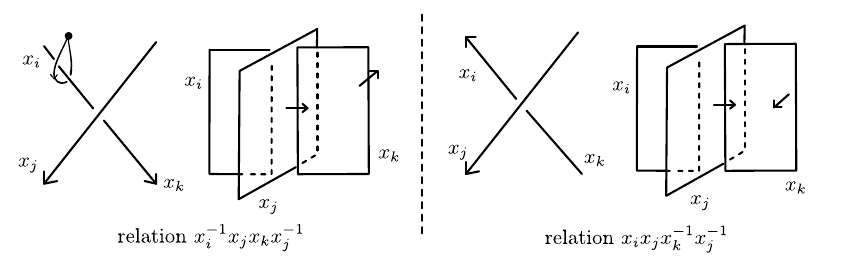}
\caption{Relations of the Wirtinger presentation, where $x_i,x_j, x_k$ are generators. We denote the orientation of each sheet by its normal vector, and we abbreviate closed paths presenting $x_i, x_j, x_k$ except that presenting $x_i$ in the leftmost figure.}\label{fig-wirtinger}
\end{figure}

\begin{proof}[Proof of Lemma \ref{lem-wirtinger}]

Let $G$ be the Wirtinger presentation of the knot group associated with $D$, given by (\ref{eq-wirtinger}).  %, and $A(-1)$ and $(-1)$ are the matrices obtained from $A(t)$ and $B(t)$ by putting $t=-1$, respectively.
Let $\phi: F(x_1, \ldots, x_m) \to G$ be the projection, and let 
$\psi: G \to G/[G,G]=\langle t \rangle$ 
be the abelianization map. We have $\psi \phi(x_l)=t$ $(l=1,\ldots, m)$. 
We denote by the same notation $\phi$ and $\psi$ the induced ring homomorphisms of group rings over $\mathbb{Z}$. 
Let $f: \Lambda=\mathbb{Z}[t, t^{-1}] \to \Lambda$ be the  homomorphism induced from $f(t)=t^{-1}$. 

Each row of $A'(t)$, which will be denoted by $\mathbf{a}'=(a_1', \ldots, a_m')$, is associated with an equation 
\begin{equation}\label{eq-0616}
tx_i+(1-t)x_j-x_k=0
\end{equation}
 for some $i,j,k \in \{1,\ldots, m\}$. 
The corresponding Wirtinger relation is either $x_i^{-1} x_j x_kx_j^{-1}$ or $x_i x_j x_k^{-1}x_j^{-1}$. Denote the first relation $x_i^{-1} x_j x_kx_j^{-1}$ by $r$. Since $x_i x_j x_k^{-1}x_j^{-1}=x_i r^{-1} x_i^{-1}$, the second relation is the same with $r$. 
We denote by 
$\mathbf{a}=(a_1, \ldots, a_m)$ the row of $A(t)$ associated with $r$. 

For all possible cases, the left side of the equation (\ref{eq-0616}) is as follows.
\begin{equation}\label{eq-0616-02}
\begin{cases} tx_i+(1-t)x_j-x_k & \text{if $i\neq j$, $j \neq k$, $k \neq i$}\\
x_i-x_k & \text{if $i=j \neq k$}\\
tx_i-tx_j & \text{if $j=k \neq i$}\\
(1-t)x_j-(1-t)x_k & \text{if $k=i \neq j$} \\
0 & \text{if $i=j=k$}.
\end{cases}
\end{equation}
First 
we consider the case when $i,j,k$ are mutually distinct.  Then,  
\begin{equation} 
a_l'=
\begin{cases}t & \text{if $l=i$} \\
1-t & \text{if $l=j$} \\
-1 & \text{if $l=k$}\\
0 & \text{otherwise.}
\end{cases}
\end{equation}

We calculate $\mathbf{a}=(a_1, \ldots, a_m)$ and $f(\mathbf{a})=(f(a_1), \ldots, f(a_m))$. 
Recall that $r=x_i^{-1} x_j x_k x_j^{-1}$. 
By Fox calculus, 
\begin{eqnarray*}
 \frac{\partial r}{\partial x_l} =\begin{cases} -x_i^{-1}  & \text{if $l=i$}\\
x_i^{-1}-x_i^{-1}x_j  x_kx_j^{-1}& \text{if $l=j$}\\
x_i^{-1} x_j & \text{if $l=k$}\\
0  & \text{otherwise}.
\end{cases}
\end{eqnarray*}
Hence 
\begin{eqnarray*}
a_l=\psi\phi\left( \frac{\partial r}{\partial x_l} \right)=\begin{cases} -t^{-1} & \text{if $l=i$}\\
t^{-1}-1& \text{if $l=j$}\\
1 & \text{if $l=k$}\\
0  & \text{otherwise}.
\end{cases}
\end{eqnarray*}
Thus 
\begin{eqnarray*}
f(a_l)=\begin{cases} -t & \text{if $l=i$}\\
t-1& \text{if $l=j$}\\
1 & \text{if $l=k$}\\
0  & \text{otherwise}.
\end{cases}
\end{eqnarray*}
Hence, in this case, $\mathbf{a}'$ is multiplication of $f(\mathbf{a})$ by $-1$. 

Similarly, for the other cases when $i,j,k$ are not mutually distinct, 
we see that $\mathbf{a}'$ is multiplication of $f(\mathbf{a})$ by a unit of $\Lambda$. 

Thus $A'(t)$ is related to $A(f(t))=A(t^{-1})$ by a sequence of operations each of which is multiplication of a row by a unit of $\Lambda$; hence $A'(t)$ is equivalent to $A(t^{-1})$. 
\end{proof}

\subsection{Fox's p-colorings}
Let $p>1$ be an integer. 
For a dihedral quandle $R_p$, 
we call an $R_p$-coloring a {\it $p$-coloring}. Let $K$ and $F$ be a classical knot and a surface-knot, respectively. Let $D$ be a diagram. 
We denote by $\mathrm{Col}_p(D)$ the set of $p$-colorings of $D$. 
We denote by $|\mathrm{Col}_p(K)|$ or $|\mathrm{Col}_p(F)|$ the number of elements of $\mathrm{Col}_p(D)$, which is invariant under diagrams of $K$ or $F$. We say that a $p$-coloring is {\it non-trivial} if colors of arcs or sheets contain at least two distinct colors. We say that a $p$-coloring is {\it non-degenerate} if colors of arcs or sheets generate $R_p$ by the binary operation $*$, and we say that a knot $K$ or $F$, or a diagram $D$, is {\it $p$-colorable} if $D$ admits a non-degenerate $p$-coloring. 
A $p$-coloring is {\it trivial} (respectively, {\it degenerate}) if it is not non-trivial (respectively, not non-degenerate). 

Let $A(t)$ be the Alexander matrix associated with $\Lambda$-colorings of $D$. 
For the equation $A(-1) \cdot \mathbf{x}=\mathbf{o}$, we say that $\mathbf{x}_0$ is a {\it solution modulo $p$} if each element of $\mathbf{x}_0$ is valued in $\mathbb{Z}/p \mathbb{Z}$ and $A(-1) \cdot \mathbf{x}_0=\mathbf{o} \pmod{p}$. 
Since the dihedral quandle $R_p=\mathbb{Z}/p\mathbb{Z}$ is obtained from an Alexander quandle $\Lambda$ by putting $t=-1$ and taking modulo $p$, we have the following lemma (Lemma \ref{lem-0613-5}). 

\begin{lemma}\label{lem-0613-5}
Let $K$ and $F$ be a classical knot and a surface-knot, respectively. Let $D$ be a diagram. 
Let $A(t)$ be the Alexander matrix associated with $\Lambda$-colorings of $D$, and  
let $m$ be the number of columns of $A(t)$. 
Let $p>1$ be an integer.
Then, the set of solutions modulo $p$ of $A(-1) \cdot \mathbf{x}=\mathbf{o}$ has one-to-one correspondence with the set of $p$-colorings of $D$, where each $j$th element $x_j$ of a solution $\mathbf{x}=(x_1, \ldots, x_m)^T$ corresponds to the color of the $j$th arc or sheet of $D$ $(j=1, \ldots, m)$. 
\end{lemma}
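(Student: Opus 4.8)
The plan is to establish the claimed bijection by connecting $p$-colorings directly to the defining equations of the Alexander matrix, exploiting the explicit description of how $R_p$ arises from the Alexander quandle $\Lambda$. First I would recall the setup: a $p$-coloring is an $R_p$-coloring of $D$, that is, a map $C \colon B(D) \to R_p = \mathbb{Z}/p\mathbb{Z}$ satisfying the dihedral quandle rule at every crossing or double point curve. At a crossing involving arcs (or sheets) indexed $i,j,k$ with the over-arc $j$, the coloring rule $x_i * x_j = x_k$ in $R_p$ reads $2x_j - x_i = x_k$, i.e.\ $-x_i + 2x_j - x_k = 0$ in $\mathbb{Z}/p\mathbb{Z}$.

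Next I would observe that this is exactly the equation obtained from the Alexander-quandle relation $t x_i + (1-t) x_j - x_k = 0$ by substituting $t = -1$ and reducing modulo $p$: indeed $t = -1$ gives $-x_i + 2 x_j - x_k = 0$, and the $R_p$ relation is the reduction of this. Since the rows of $A(t)$ are precisely the row vectors encoding the Alexander-quandle relations $t x_i + (1-t)x_j - x_k$ over all crossings or double point curves of $D$, the row of $A(-1)$ corresponding to a given crossing encodes the coefficient vector $(-1, 2, -1)$ (placed in columns $i,j,k$), with the appropriate degenerate forms when $i,j,k$ are not distinct, as recorded in equation (\ref{eq-0616-02}). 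Thus a tuple $\mathbf{x} = (x_1, \dots, x_m)^T$ with entries in $\mathbb{Z}/p\mathbb{Z}$ satisfies $A(-1)\cdot \mathbf{x} = \mathbf{o} \pmod{p}$ if and only if the assignment $x_j \mapsto$ color of the $j$th arc or sheet satisfies the $R_p$-coloring rule at every crossing or double point curve.

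The correspondence is then set up explicitly: to a solution $\mathbf{x}_0$ modulo $p$ I associate the coloring $C$ sending the $j$th arc or sheet to $x_j \in R_p$, and conversely to a $p$-coloring $C$ I associate the tuple of its values. By the equivalence of the per-crossing equations just noted, $C$ satisfies the coloring rule everywhere precisely when $\mathbf{x}_0$ solves the linear system, so these two assignments are mutually inverse maps between the set of solutions modulo $p$ and the set $\mathrm{Col}_p(D)$. This yields the desired one-to-one correspondence with the stated indexing $x_j \leftrightarrow$ color of the $j$th arc or sheet.

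The only genuine subtlety, and the step I expect to require the most care, is verifying that the degenerate cases of equation (\ref{eq-0616-02}) — where two of $i,j,k$ coincide — also reduce correctly to the dihedral relation modulo $p$. For instance, when $j = k \neq i$ the Alexander relation is $t x_i - t x_j$, which at $t=-1$ becomes $-x_i + x_j$, matching the coloring rule $x_i = x_j$ forced when the under- and over-strands agree; the remaining cases are checked similarly. Once one confirms that each row of $A(-1) \pmod p$ faithfully transcribes the $R_p$-coloring condition at its crossing — including these collapsed cases — the bijection follows immediately, since both sides are simply the solution set of the same finite system of linear congruences.
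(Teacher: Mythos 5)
Your proposal is correct and follows exactly the route the paper intends: the paper treats this lemma as immediate from the observation that $R_p$ is obtained from the Alexander quandle $\Lambda$ by setting $t=-1$ and reducing modulo $p$, so that each row of $A(-1) \pmod{p}$ transcribes the dihedral coloring rule at its crossing or double point curve, which is precisely your argument. Your explicit check of the degenerate cases of (\ref{eq-0616-02}) (e.g.\ $k=i\neq j$, where both sides impose the identical congruence $2(x_j-x_i)\equiv 0 \pmod{p}$) is a sound filling-in of detail the paper leaves implicit.
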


From now on, we investigate properties of $p$-colorings. 

\begin{lemma}\label{lem920-5}
Let $K$ and $F$ be a classical knot and a surface-knot, respectively. Let $D$ be a diagram. Let $p>1$ be an integer. Assume that $D$ is $p$-colorable. Then, for a given arc or sheet $\alpha$ and a given color $x$, $D$ admits a non-degenerate $p$-coloring $C$ such that 
$C(\alpha)=x$. 
\end{lemma}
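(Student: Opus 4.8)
The plan is to observe that the translations of the dihedral quandle $R_p$ are quandle automorphisms, and to move a given non-degenerate $p$-coloring to the desired one by composing it with such a translation. Since a $p$-coloring is nothing but a map $C\colon B(D)\to R_p$ respecting the coloring rule, and composing it with a quandle homomorphism again respects that rule, translating all colors by a fixed constant will produce a new coloring whose value on the prescribed arc or sheet $\alpha$ can be adjusted to any target. Transitivity of the translations on $R_p$ is what makes the prescribed color $x$ attainable.

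Concretely, I would first use the hypothesis that $D$ is $p$-colorable to fix a non-degenerate $p$-coloring $C_0\colon B(D)\to R_p$; by definition its colors generate $R_p$ under $*$. Write $c=C_0(\alpha)$. For $t\in R_p$ let $\phi_t\colon R_p\to R_p$ be the translation $\phi_t(y)=y+t$. Using $y*y'=2y'-y$ one checks that $\phi_t(y*y')=2y'-y+t=(y+t)*(y'+t)=\phi_t(y)*\phi_t(y')$, so $\phi_t$ is a quandle homomorphism, and being a bijection it is an automorphism of $R_p$. Then $C:=\phi_t\circ C_0$ is again a $p$-coloring, because the rule at every crossing or double point curve has the form (under-output color) $=$ (under-input color) $*$ (over color), and this equation is preserved by applying the homomorphism $\phi_t$. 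Taking $t=x-c$ yields $C(\alpha)=\phi_t(c)=c+(x-c)=x$.

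It remains to see that $C$ is non-degenerate, which is the only genuine point in the argument. Since $\phi_t$ is a surjective quandle homomorphism, it sends the subquandle generated by the colors of $C_0$ into the subquandle generated by the colors of $C$; as the former equals all of $R_p$ by non-degeneracy of $C_0$, and $\phi_t$ is surjective, the image is all of $R_p$, whence the colors of $C$ generate $R_p$ as well. Thus $C$ is a non-degenerate $p$-coloring with $C(\alpha)=x$. I do not expect any serious obstacle: the whole argument depends only on the quandle structure of $R_p$ and on the transitivity of its translations, so it applies uniformly to the classical-knot and surface-knot cases and to arbitrary $p>1$, without any use of primality. (Equivalently, in the linear-algebra language of Lemma \ref{lem-0613-5}, $\phi_t$ amounts to adding the constant solution $(t,\ldots,t)^T$, which lies in the solution space of $A(-1)\cdot\mathbf{x}=\mathbf{o}$ and leaves all color differences unchanged.)
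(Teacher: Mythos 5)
Your proposal is correct and is essentially the paper's own argument: the paper defines $C(\beta)=C_0(\beta)-C_0(\alpha)+x \pmod{p}$, which is exactly your translation $\phi_t\circ C_0$ with $t=x-C_0(\alpha)$, and likewise deduces non-degeneracy from the bijective correspondence between the sets generated by the colors of $C_0$ and of $C$ under $*$. Your write-up merely makes explicit the verification that translations are quandle automorphisms of $R_p$ and hence preserve the coloring rule, which the paper leaves implicit.
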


\begin{proof}
Since $D$ is $p$-colorable, $D$ admits a non-degenerate $p$-coloring, which will be denoted by $C_0$. 
Then, define $C$ by $C(\beta)=C_0(\beta)-C_0(\alpha)+x \pmod{p}$, for any arc or sheet $\beta$ of $D$. Then, $C$ is a $p$-coloring with $C(\alpha)=x$. Further, since the set generated by colors by $C_0$ using the dihedral quandle operation $*$ has one-to-one correspondence to that by $C$, the assumption that $C_0$ is non-degenerate implies that so is $C$. Thus, $C$ is the required $p$-coloring. 
\end{proof}

\begin{lemma}\label{lem920-1}
Let $K$ and $F$ be a classical knot and a surface-knot, respectively. Let $D$ be a diagram. 
Let $p>1$ be an integer. 
If $D$ admits a non-trivial $p$-coloring, then there exists a divisor $q>1$ of $p$ such that $D$ is $q$-colorable. 
\end{lemma}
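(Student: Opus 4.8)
The plan is to push the given non-trivial $p$-coloring down to a non-degenerate coloring modulo a proper divisor of $p$, by passing to the subgroup of $\mathbb{Z}/p\mathbb{Z}$ generated by the differences of the colors that occur. Concretely, let $C \colon B(D) \to R_p = \mathbb{Z}/p\mathbb{Z}$ be a non-trivial $p$-coloring, let $S = \{ C(\alpha) : \alpha \in B(D)\}$ be its set of colors, and let $H \le \mathbb{Z}/p\mathbb{Z}$ be the subgroup generated by all differences $s - s'$ with $s, s' \in S$. Non-triviality means $|S| \ge 2$, so $H \ne 0$; and since $\mathbb{Z}/p\mathbb{Z}$ is cyclic, $H$ is the unique cyclic subgroup of some order $q$, where $q$ is a divisor of $p$ with $q > 1$, and the map $\iota \colon H \to \mathbb{Z}/q\mathbb{Z}$ dividing by $p/q$ is a group isomorphism.

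Next I would build the candidate $q$-coloring. Fix an arc or sheet $\alpha_0$ and set $s_0 = C(\alpha_0)$; for every $\beta \in B(D)$ we have $C(\beta) - s_0 \in H$, so we may define $C'(\beta) = \iota\big(C(\beta) - s_0\big) \in R_q$. The dihedral coloring rule at each crossing or double point curve reads $C(x_k) = 2\,C(x_j) - C(x_i) \pmod p$; this relation is affine with coefficient sum $2 - 1 = 1$, so subtracting $s_0$ from both sides gives $C(x_k) - s_0 = 2\,(C(x_j) - s_0) - (C(x_i) - s_0)$ in $H$. Applying the group isomorphism $\iota$, which automatically respects the operation $h_1 * h_2 = 2 h_2 - h_1$, yields $C'(x_k) = 2\,C'(x_j) - C'(x_i) \pmod q$. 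Hence $C'$ satisfies the coloring rule, i.e.\ it is a $q$-coloring of $D$.

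It then remains to check that $C'$ is non-degenerate, and for this I would isolate a sub-lemma on dihedral quandles: the subquandle of $R_n$ generated by a set of colors is the coset, through any one of those colors, of the subgroup generated by the color-differences. Granting this, the differences of the colors of $C'$ are exactly the elements $\iota(s - s')$ with $s, s' \in S$, which generate $\iota(H) = \mathbb{Z}/q\mathbb{Z}$; therefore the colors of $C'$ generate the whole coset, namely all of $R_q$, so $C'$ is non-degenerate and $D$ is $q$-colorable, as required.

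I expect the sub-lemma to be the main obstacle, the delicate point being the case when $q$ is even, where $R_q$ is disconnected. To prove it I would use that each basic operation $x \mapsto 2a - x$ is a reflection, so a product of two such reflections is translation by an element of $2H$; thus the generated subquandle is invariant under translation by $2H$ and is therefore a union of cosets of $2H$. At least two distinct such cosets must occur, since otherwise all color-differences would lie in $2H$ and so could not generate $H$; together with the translations by $2H$ these cosets then exhaust all of $H$, which closes the parity gap and finishes the argument.
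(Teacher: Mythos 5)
Your proof is correct, and it follows the same top-level reduction as the paper: normalize the coloring so that a chosen arc or sheet gets color $0$ (your subtraction of $s_0$ is exactly the paper's Lemma \ref{lem920-5}), identify the subgroup $H$ of $\mathbb{Z}/p\mathbb{Z}$ attached to the colors, set $q=|H|$, and rescale by $p/q$ to get the non-degenerate $q$-coloring (the paper's $C'(\alpha)=C(\alpha)/r \pmod q$ is precisely your $\iota$). Where you genuinely diverge is in the key generation lemma. The paper invokes Lemma \ref{lem12}, which is stated and proved in $\mathbb{Z}$ by explicit computation: Bézout coefficients $ld_1'+md_2'=1$ and a case analysis on the parities of $l$, $m$, $d_1'$, $d_2'$ to express the gcd as a $*$-word, with the result then transferred implicitly to $\mathbb{Z}/p\mathbb{Z}$. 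You instead prove a coset-form sub-lemma directly in the finite dihedral quandle: the subquandle generated by $S$ is the coset $s_0+\langle S-S\rangle$, established by observing that a product of two reflections $x\mapsto 2a-x$, $x\mapsto 2b-x$ is translation by $2(b-a)$, so the generated set is a union of cosets of $2H$, and the index-$2$ count (some difference $s-s'\notin 2H$, else $H\subseteq 2H$) forces both $2H$-cosets inside $s_0+H$ to appear. Your argument is sound, including the delicate even-order case, and it buys a uniform, conceptual treatment that works natively in the quotient $\mathbb{Z}/p\mathbb{Z}$ without the Bézout parity bookkeeping; the paper's explicit Lemma \ref{lem12}, on the other hand, yields the gcd description $d\mathbb{Z}$ over $\mathbb{Z}$, which it reuses elsewhere (e.g.\ the closure of $H$ under $*$ cited in the proof of Proposition \ref{prop916-6}), so the computational version earns its keep in the paper's broader architecture.
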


\begin{proof} 
Assume that $D$ admits a non-trivial $p$-coloring $C$. 
By Lemma \ref{lem920-5}, we assume that there is an arc or sheet of $D$ with the color $0 \pmod{p}$. Then, by Lemma \ref{lem12}, by the dihedral quandle operation $*$, colors of $D$ generate a subgroup $H$ of the cyclic group $\mathbb{Z}/p\mathbb{Z}$. 
Since the $p$-coloring $C$ is non-trivial, $H$ is a non-trivial group, and hence the order of $H$ is greater than $1$ and a divisor of $p$. Take $q$ as the order of $H$, and put $r=p/q$ and take a $q$-coloring $C'$ defined by $C'(\alpha)=C(\alpha)/r \pmod{q}$ for any arc or sheet $\alpha$.  Then, $C'$ gives a non-degenerate $q$-coloring, and we have the required result. 
\end{proof}

\begin{lemma}\label{lem12}
For an integer $n>0$, 
let $d_1, \ldots, d_n$ be non-zero integers.
Then, by the binary operation $*$ defined by $l*m=2m-l$ $(l, m \in \mathbb{Z})$, $0,d_1, \ldots, d_n$ generate $d\mathbb{Z}=\{0, \pm d, \pm 2d, \ldots\} \subset \mathbb{Z}$, where $d=\gcd \{d_1, \ldots, d_n\}$. 
\end{lemma}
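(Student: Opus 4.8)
The plan is to recast the operation geometrically. For each $m\in\mathbb{Z}$ write $\sigma_m\colon \mathbb{Z}\to\mathbb{Z}$ for the reflection $\sigma_m(x)=2m-x$, so that $x*m=\sigma_m(x)$. Since $\sigma_m(\sigma_m(x))=x$, the operation $*$ is its own inverse; hence the subset $S\subset\mathbb{Z}$ generated by $\{0,d_1,\dots,d_n\}$ is exactly the smallest subset containing these elements that is stable under every $\sigma_m$ with $m\in S$. I would first dispose of the easy inclusion $S\subseteq d\mathbb{Z}$: all the generators lie in $d\mathbb{Z}$, and if $l,m\in d\mathbb{Z}$ then $l*m=2m-l\in d\mathbb{Z}$, so $d\mathbb{Z}$ is stable under $*$ and contains the generators, forcing $S\subseteq d\mathbb{Z}$.

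The core of the argument is the reverse inclusion, and the key device is the identity $\sigma_m\circ\sigma_{m'}(x)=x+2(m-m')$: composing two reflections with centres in $S$ is translation by $2(m-m')$. Because each $\sigma_m$ with $m\in S$ maps $S$ bijectively onto itself, so does every such composition, and thus $S$ is invariant under translation by $2(m-m')$ for all $m,m'\in S$. The differences $m-m'$ include $d_i-0=d_i$ and, since $S\subseteq d\mathbb{Z}$, all lie in $d\mathbb{Z}$; hence they generate precisely the subgroup $d\mathbb{Z}$, and the corresponding translations generate $2d\mathbb{Z}$. Consequently $S$ is a union of cosets of $2d\mathbb{Z}$ inside $d\mathbb{Z}$.

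It then remains to check that $S$ meets both cosets of $2d\mathbb{Z}$ in $d\mathbb{Z}$, i.e.\ contains both the even and the odd multiples of $d$. From $0\in S$ and the translation invariance we get $2d\mathbb{Z}\subseteq S$, which is the even multiples. For the odd multiples, not every $d_i/d$ can be even, for otherwise $2d$ would divide all $d_i$, contradicting $d=\gcd\{d_1,\dots,d_n\}$; choosing an index with $d_i/d$ odd, the coset $d_i+2d\mathbb{Z}\subseteq S$ consists of all odd multiples of $d$. Together these give $d\mathbb{Z}\subseteq S$, and combined with the first inclusion we conclude $S=d\mathbb{Z}$.

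I would flag the parity phenomenon as the main obstacle. Each reflection $\sigma_m$ preserves the parity of its argument, since $2m-x\equiv x\pmod 2$, so one can never cross between even and odd multiples of $d$ by a single application of $*$; only the \emph{composite} translations move $S$, and they move it within a fixed coset of $2d\mathbb{Z}$. The entire weight of reaching the odd multiples of $d$ therefore rests on the existence of a generator $d_i$ that is itself an odd multiple of $d$, which is exactly what the coprimality of $d_1/d,\dots,d_n/d$ guarantees. Everything else is bookkeeping with the reflection-composition identity.
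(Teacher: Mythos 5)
Your proof is correct, and it takes a genuinely different route from the paper's. The paper argues computationally, by induction on $n$: for $n=1$ it builds $d_1\mathbb{Z}$ inside the generated set $H$ step by step, using $-l=l*0$, $2md_1=0*(md_1)$ and $(2m+1)d_1=(-d_1)*(md_1)$; for $n=2$ it chooses B\'ezout coefficients with $ld_1'+md_2'=1$ for $d_i'=d_i/d$ and, through a case analysis on the parities of $l$, $m$, $d_1'$, $d_2'$ (shifting the coefficients to $(l+d_2',\,m-d_1')$ in the remaining odd case), exhibits $d$ itself as a single $*$-product of elements of $d_1\mathbb{Z}\cup d_2\mathbb{Z}$; the case $n>2$ is left as a similar argument. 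You instead exploit the affine structure: closure together with the involutivity of $\sigma_m(x)=2m-x$ forces $\sigma_m(S)=S$ for every $m\in S$, composites of two such reflections are translations by $2(m-m')$, so the group of translations preserving $S$ contains $2d\mathbb{Z}$ and $S$ is a union of cosets of $2d\mathbb{Z}$ in $d\mathbb{Z}$; the even coset is hit because $0\in S$, and the odd coset because some $d_i/d$ must be odd, as $\gcd\{d_1/d,\ldots,d_n/d\}=1$. Each step of yours checks out: in particular $\sigma_m(S)\subseteq S$ by closure and $S=\sigma_m(\sigma_m(S))\subseteq\sigma_m(S)$ give the needed bijectivity, and the set of translations fixing $S$ setwise is indeed a subgroup containing all $2(m-m')$, hence $2d\mathbb{Z}$. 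What your approach buys is uniformity and transparency: it is independent of $n$, eliminating both the induction and the B\'ezout parity bookkeeping, and it names the real obstruction structurally (reflections with centers in $d\mathbb{Z}$ preserve the parity of $x/d$, so only the presence of a generator that is an odd multiple of $d$ can reach the odd coset). The paper's computation, in exchange, produces explicit $*$-expressions witnessing membership of each element of $d\mathbb{Z}$. Both proofs ultimately turn on the same mod-$2$ phenomenon.
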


\begin{proof}
We denote by $H$ the set generated by $\{0,d_1, \ldots, d_n\}$, using the operation $*$. By the definition of $*$, we see that $H$ is contained in the $\mathbb{Z}$-ideal generated by $\{0,d_1, \ldots, d_n\}$, which is $d \mathbb{Z}$. 
Thus it suffices to show that  $d \mathbb{Z} \subset H$. 

First we show the result for the case $n=1$. 
Since $-l=l*0$ for any integer $l$, taking $l=d_1$, we see that $0, \pm d_1 \in H$. 
For a positive integer $m$, assume that $0, \pm d_1, \pm 2d_1, \ldots, \pm (2m-1)d_1 \in H$. 
Then, since $2md_1=0*(md_1)$ and $(2m+1)d_1=(-d_1)*(md_1)$, and $-l=l*0$ for any $l$, 
by induction on $m$, we see that $d_1 \mathbb{Z} \subset H$. 

Next we show the result for the case $n=2$. 
By the above argument, $d_1 \mathbb{Z} \cup d_2 \mathbb{Z} \subset H$. 
Thus it suffices to show that $d$ is generated by elements in $d_1 \mathbb{Z} \cup d_2 \mathbb{Z}$, using the operation $*$. 
Put $d_1'=d_1/d$, and $d_2'=d_2/d$, which are coprime integers.
Take integers $l, m$ such that $ld_1'+md_2'=1$. 
Since $d_1'$ and $d_2'$ are coprime, one of them is odd. 
First we consider the case when both of $d_1'$ and $d_2'$ are odd. Then one of $l,m$ is even. Assume that $l$ is even. Then, $d=ld_1+md_2=(-md_2)*(l/2)d_1$; hence $d \in H$. 
Next we consider the other case when 
one of $d_1', d_2'$ is odd and the other is even. 
Assume that $d_1'$ is odd and $d_2'$ is even. 
Then, $l$ is odd, and $m$ is odd or even.
If $m$ is even, then $d=(-ld_1)*(m/2)d_2 \in H$.  
If $m$ is odd, then $m-d_1'$ is even. 
Hence $d=(l+d_2')d_1+(m-d_1')d_2=\{-(l+d_2')d_1\}*\{(m-d_1')/2\}d_2$; 
thus $d \in H$. 

The other cases $n>2$ are shown by a similar argument. 
\end{proof}
 
\begin{proposition}\label{prop916-6}
Let $K$ and $F$ be a classical knot and a surface-knot, respectively. Let $p>1$ be an integer. If $K$ or $F$ is $p$-colorable, then the knot determinant $|\Delta_K(-1)|$ or $\Delta_F|_{t=-1}$ is divisible by $p$. 
\end{proposition}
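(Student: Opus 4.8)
The plan is to convert $p$-colorability into a purely group-theoretic statement about a finite abelian group whose order is the knot determinant, and then read off divisibility by $p$ directly. Throughout, write $\delta$ for the relevant knot determinant ($|\Delta_K(-1)|$ or $\Delta(F)|_{t=-1}$), let $D$ be a diagram with $m$ arcs/sheets, let $A(t)$ denote the Alexander matrix of the Wirtinger presentation, and let $A'(t)$ denote the Alexander matrix associated with $\Lambda$-colorings of $D$.

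First I would dispose of the degenerate case: if $\delta=0$ then $p\mid\delta$ automatically, so I may assume $\delta\neq 0$. Next I identify $\delta$ with a determinantal divisor of an integer matrix. Since $A(t)$ is a presentation matrix of the Alexander module, $\delta$ is the nonnegative generator of the image at $t=-1$ of the ideal of $(m-1)$-minors of $A(t)$, i.e.\ the gcd of the $(m-1)$-minors of $A(-1)$. By Lemma \ref{lem-wirtinger}, $A'(t)$ is equivalent to $A(t^{-1})$, so at $t=-1$ the integer matrices $A'(-1)$ and $A(-1)$ are equivalent and hence have equal $(m-1)$-st determinantal divisor; thus $\delta$ is also the gcd of the $(m-1)$-minors of $A'(-1)$.

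The structural heart of the argument is to analyze the cokernel $C:=\mathbb{Z}^m/(\text{row space of }A'(-1))$. By Lemma \ref{lem-0613-5} the set of $p$-colorings is $\mathrm{Hom}(C,\mathbb{Z}/p)$, where the color of the $j$th arc/sheet is $h(\bar e_j)$ for the image $\bar e_j\in C$ of the $j$th standard basis vector. Every row of $A'(-1)$ has entries summing to zero (the coloring coefficients are $-1,2,-1$ at $t=-1$, and likewise in the degenerate cases), so the augmentation $\epsilon\colon C\to\mathbb{Z}$, $\epsilon(\bar e_j)=1$, is well defined and surjective. Since $(1,\dots,1)^{T}$ spans the right kernel of $A'(-1)$ while the nonzero $(m-1)$-minor coming from $\delta\neq 0$ forces rank exactly $m-1$, the cokernel $C$ has free rank $1$. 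Hence $C\cong\mathbb{Z}\oplus T$ with $T=\ker\epsilon$ equal to the torsion subgroup, and $|T|=\delta$ (the order of the torsion equals the top determinantal divisor).

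Finally I would use non-degeneracy. Given $p$-colorability, choose a non-degenerate $p$-coloring $h\colon C\to\mathbb{Z}/p$. The differences of colors are $h(\bar e_j-\bar e_k)$, and since $\epsilon(\bar e_j-\bar e_k)=0$ these lie in $T$; moreover, because the $\bar e_j$ generate $C$ and all have augmentation $1$, the differences $\bar e_j-\bar e_k$ generate $T=\ker\epsilon$. By Lemma \ref{lem12}, after translating one color to $0$ the subquandle generated by the colors is the subgroup of $\mathbb{Z}/p$ generated by these differences, namely $h(T)$; non-degeneracy forces $h(T)=\mathbb{Z}/p$. Thus $h|_{T}\colon T\twoheadrightarrow\mathbb{Z}/p$ is surjective, so $p\mid|T|=\delta$, as required. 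The main obstacle is precisely that $p$ need not be prime: one cannot simply argue that some $(m-1)$-minor is a unit modulo $p$, which is why I route the proof through the cokernel $C$, isolate the augmentation kernel $T$ as a finite group of order exactly $\delta$, and match the quandle-generation condition of Lemma \ref{lem12} with the existence of a surjection onto $\mathbb{Z}/p$; getting the free rank of $C$ to be exactly $1$ (so that $|T|=\delta$) is the step requiring the most care.
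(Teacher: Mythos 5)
Your proof is correct, and while it rests on the same linear-algebraic facts as the paper's, it is organized along a genuinely different route. The paper argues by contradiction and stays concrete: using the row-sum-zero observation it zeroes the last column of the coloring matrix, passes to the Smith normal form (\ref{eq13}) with diagonal $d_1,\dots,d_{m-1}$ and $d_1\cdots d_{m-1}=\delta$, and shows that if $p\nmid\delta$ then every mod-$p$ solution with $x_m=0$ has all entries in the proper subgroup of $\mathbb{Z}/p\mathbb{Z}$ generated by $p/\gcd\{p,d_{m-1}\}$, which is closed under $*$ by Lemma \ref{lem12}; this contradicts Lemma \ref{lem920-5}, which supplies a non-degenerate coloring with a prescribed color $0$ on a prescribed arc. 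You instead argue directly and structurally: identifying $p$-colorings with $\mathrm{Hom}(C,\mathbb{Z}/p)$ for the cokernel $C$ of $A'(-1)$, splitting $C\cong\mathbb{Z}\oplus T$ via the augmentation (your rank computation from $(1,\dots,1)^{T}$ together with a nonzero $(m-1)$-minor is sound, and the differences $\bar e_j-\bar e_k$ do generate $\ker\epsilon$ since any $\sum a_j\bar e_j$ with $\sum a_j=0$ equals $\sum a_j(\bar e_j-\bar e_1)$), getting $|T|=\delta$ as the top determinantal divisor, and extracting from a non-degenerate coloring a surjection $T\to\mathbb{Z}/p$, whence $p\mid\delta$ by Lagrange. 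The two arguments are essentially dual: your failure of $h(T)=\mathbb{Z}/p$ is exactly the paper's proper subgroup $H$, your translation of one color to $0$ is exactly the content of Lemma \ref{lem920-5}, and both handle composite $p$ (the paper via $\gcd\{p,d_{m-1}\}<p$, you via the order argument, so your stated worry about non-primality is equally resolved in the source). What your version buys is a cleaner conceptual statement ($\delta$ is the order of the torsion of the coloring cokernel, and non-degenerate colorings are precisely surjections of $T$ onto $\mathbb{Z}/p$ up to translation), plus an elegant disposal of the $\delta=0$ case that avoids invoking oddness of the surface-knot determinant from \cite{Joseph}; what the paper's version buys is explicit Smith-form bookkeeping (condition O, the solution count) that is reused almost verbatim in the proofs of Propositions \ref{prop916-5} and \ref{prop-0611}. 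One point worth making explicit in your write-up: Lemma \ref{lem12} is stated over $\mathbb{Z}$, so its application in $\mathbb{Z}/p\mathbb{Z}$ needs the routine reduction mod $p$, and the translated coloring must be checked to remain a homomorphism and non-degenerate—but the paper uses Lemma \ref{lem12} mod $p$ and the translation trick in exactly the same implicit way, so neither is a gap relative to the source.
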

 
\begin{proof}
First we consider the case of a classical knot $K$. 
We assume that $K$ is $p$-colorable. 
Let $D$ be a diagram, and 
let $A(t)$ be the Alexander matrix associated with $\Lambda$-colorings of $D$.  
Let $n$ and $m$ be the numbers of rows and columns of $A(t)$, respectively. 
Remark that since $K$ is $p$-colorable, $K$ is a non-trivial knot, and $m>1$. 
For any row of $A(t)$, by (\ref{eq-0616-02}), its elements form a multiset that is one of the following: 
\begin{eqnarray*}
& & \{t, 1-t, -1, 0,0,\ldots, 0\},\\
& & \{1, -1, 0,0,\ldots, 0\},\\
& & \{t, -t, 0,0,\ldots, 0\},\\
& & \{1-t, -(1-t), 0,0,\ldots, 0\},\\
& & \{0,0,\ldots, 0\}.
\end{eqnarray*}
Hence, the sum of elements of each row is zero, so, by adding the $j$th column once for each $j=1, \ldots, m-1$ to the $m$th column, we transform $A(t)$ to a matrix whose $m$th column is the zero column vector. Then we put $t=-1$, and we consider the resulting integer matrix. By elementary transformations for integer matrices, without using the $m$th column, we transform the resulting matrix to the following Smith normal form $B$ such that  
 
\begin{equation}\label{eq13}
B=P\cdot A(-1)\cdot Q=\begin{pmatrix}
d_1 & 0 & \cdots & 0  & 0\\
0 & d_2 & &\vdots  & \vdots \\
\vdots  &  & \ddots &  &0 \\

0 &  \cdots & 0 & d_{m-1} & 0 \\
0 & 0 & \cdots & 0 & 0\\
\vdots & \vdots & & \vdots & \vdots\\
0 & 0 & \cdots & 0 & 0
\end{pmatrix}, 
\end{equation}
where $P$ and $Q$ are unimodular matrices such that 
\[
Q=\begin{pmatrix}
Q' & *\\
\mathbf{o} & 1
\end{pmatrix}
\]
for some $(n-1, m-1)$-matrix $Q'$, and $d_1, \ldots, d_{m-1}$ are positive integers such that $d_{j+1}$ is a multiple of $d_j$ $(j=1,\ldots, m-2)$, and $d_1 \cdots d_{m-1}=|\Delta_K(-1)|$. 

Suppose that $p$, which is an integer greater than $1$, is not a divisor of $|\Delta_K(-1)|$. 
We consider a system of homogeneous linear equations: 
\begin{equation}\label{eq916-3}
B \mathbf{x}=\mathbf{o} \pmod{p}.  
\end{equation} 
Put $\mathbf{x}=(x_1, \ldots, x_{m})^T$. 
Then, we fix the free variable $x_{m}$ by putting $x_{m}=0$, and we consider solutions of modulo $p$, $\mathbf{x} \in (\mathbb{Z}/p\mathbb{Z})^{m}$, of (\ref{eq916-3}). 
We see that for any solution of (\ref{eq916-3}), each element $x_j$ $(j=1,\ldots, m-1)$ is a multiple of $p/ \gcd \{p, d_j\} \pmod{p}$; hence 
the elements $x_1, \ldots, x_{m-1}, x_{m}$ ($x_{m}=0$) are in the subgroup of the cyclic group $\mathbb{Z}/p \mathbb{Z}$ that is generated by $\{p/ \gcd \{p, d_j\} \pmod{p} \mid j=1,\ldots, m-1\}$, which will be denoted by $H$. Since $d_{j+1}$ is a multiple of $d_j$ $(j=1,\ldots,m-2)$, $p/ \gcd \{p, d_{j+1}\}$ is a divisor of $p/ \gcd \{p, d_j\}$, hence $H$ is generated by $p/ \gcd \{p, d_{m-1}\} \pmod{p}$. Since $p$ is not a divisor of $|\Delta_K(-1)|$, $ 0<\gcd \{p, d_{m-1}\}< p$, and hence the generator $p/ \gcd \{p, d_{m-1}\} \pmod{p} \neq 1 \pmod{p}$ and $H$ is a proper subgroup of $\mathbb{Z}/p \mathbb{Z}$.

Since $B=P\cdot A(-1)\cdot Q$, 
the set of solutions $\mathbf{x}=(x_1, \ldots, x_m)^T\in (\mathbb{Z}/p\mathbb{Z})^m$ of 
the equation $B \mathbf{x}=\mathbf{o} \pmod{p}$ with $x_m=0$ has one-to-one correspondence to the set of solutions $ \mathbf{x}'=(x_1', \ldots, x_m')^T\in (\mathbb{Z}/p\mathbb{Z})^m$ of 
the equation $A(-1) \cdot \mathbf{x}'=\mathbf{o} \pmod{p}$ with $x'_m=0$. 
We consider elements of any solution of this equation $A(-1) \cdot \mathbf{x}'=\mathbf{o} \pmod{p}$, with $x'_m=0$. 
Since the group $H$ is closed under addition and multiplication by integers, 
$x'_1, \ldots, x'_{m}$ of any solution $\mathbf{x}'$ are also in $H$. 
Further, by Lemma \ref{lem12},  $H$ is closed under the dihedral quandle operation $*$. 
 Hence, $x_1', \ldots, x_m'$ do not generate $R_p$ by the operation $*$. By Lemma \ref{lem-0613-5}, this implies the following. Let $\alpha$ be the $m$-th arc. Then, for any $p$-coloring $C$ satisfying $C(\alpha)=0$, colors of arcs given by $C$ do not generate $R_p$ by the operation $*$: $D$ admits no non-degenerate $p$-colorings $C$ satisfying $C(\alpha)=0$. 
This contradicts Lemma \ref{lem920-5}, 
hence $|\Delta_K(-1)|$ is divisible by $p$. 

Next we consider the case of a surface-knot $F$.  We assume that $F$ is $p$-colorable. We consider $A(-1)$ and transform it to the Smith formal form.  
Since the knot determinant $\Delta(F)|_{t=-1}$ of any surface-knot is odd \cite[Proposition 5.8]{Joseph}, we see that 
the Smith normal form is the same form with (\ref{eq13}), 
where we replace the knot determinant $|\Delta_K(-1)|$ by $\Delta_F|_{t=-1}$. 
Hence, by the same argument as the case of $K$, we see that $\Delta(F)|_{t=-1}$ is divisible by $p$. 
\end{proof}

\begin{remark}
We can define the Goeritz invariant \cite{Goeritz} for surface-knots and give a necessary and sufficient condition for surface-knots to be $p$-colorable. See also \cite{P}. 
\end{remark}

For a surface-knot $F$, we denote the knot determinant $\Delta(F)|_{t=-1}$ by $\Delta(F)|_{-1}$. 
\begin{proposition}\label{prop0614}
Let $F=\mathcal{S}_n(a,b)$ be a torus-covering $T^2$-knot with basis $n$-braids $(a, b)$ such that the closure of the braid $a$ is a knot. 
If $\mathcal{S}_n(a,b)$ is $|\Delta_{\hat{a}}(-1)|$-colorable, then 
$\Delta(F)|_{-1}=|\Delta_{\hat{a}}(-1)|$. 
\end{proposition}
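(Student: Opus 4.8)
The plan is to prove the two divisibilities $\Delta(F)|_{-1} \mid |\Delta_{\hat a}(-1)|$ and $|\Delta_{\hat a}(-1)| \mid \Delta(F)|_{-1}$, which together with the nonnegativity of both quantities force equality. The first divisibility will be unconditional; only the second will use the colorability hypothesis.

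For the first divisibility I would reuse the matrix analysis from the proof of Theorem \ref{thm2-1}. There, starting from the presentation (\ref{eq-*}) of Lemma \ref{lem-k}, one forms the Alexander matrix $A(t)$ of $\pi_1(S^4\backslash F)$ with columns indexed by $x_1,\dots,x_{2g},z$; by Lemma \ref{lem4}(2) the $z$-column vanishes identically, and setting $t=-1$ yields $A(-1)=(A\ \mathbf{o})$, where $A=(a_{ij})$ is the $(4g+1)\times 2g$ integer matrix of (\ref{eqA}). Since the Alexander matrix is a presentation matrix of the Alexander module and $\Delta(F)$ is its first elementary ideal, $\Delta(F)|_{-1}$ is the nonnegative generator of the $\mathbb{Z}$-ideal spanned by the $(2g)\times(2g)$ minors of $A(-1)$; every such minor using the zero $z$-column vanishes, so $\Delta(F)|_{-1}$ is the greatest common divisor of the $(2g)\times(2g)$ minors of $A$.

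Next I would locate the classical knot determinant inside this picture. The first $2g$ rows of $A$ come from $r_1,\dots,r_{2g}$, i.e. from $z\alpha_j z^{-1}=\beta_j$, which are exactly the relations of Lin's presentation (Lemma \ref{lem-Lin}) of $\pi_1(S^3\backslash\hat a)$. Running the same Fox-calculus computation (Lemmas \ref{lem3} and \ref{lem4}) for $\hat a$ shows that the Alexander matrix of $\hat a$ at $t=-1$ is $(A_{\hat a}\ \mathbf{o})$, where $A_{\hat a}$ is precisely the top $2g\times 2g$ block of $A$; since its $z$-column is again zero, the only nonvanishing $(2g)\times(2g)$ minor is $\det A_{\hat a}$, whence $|\Delta_{\hat a}(-1)|=|\det A_{\hat a}|$. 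But $\det A_{\hat a}$ is one of the $(2g)\times(2g)$ minors of $A$, so their greatest common divisor divides it; that is, $\Delta(F)|_{-1}$ divides $|\Delta_{\hat a}(-1)|$.

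For the reverse divisibility I would invoke Proposition \ref{prop916-6}: the hypothesis that $F$ is $|\Delta_{\hat a}(-1)|$-colorable gives $|\Delta_{\hat a}(-1)| \mid \Delta(F)|_{-1}$. Combining the two divisibilities, and using that both numbers are nonnegative with $|\Delta_{\hat a}(-1)|\neq 0$ (as $\hat a$ is a knot), yields $\Delta(F)|_{-1}=|\Delta_{\hat a}(-1)|$. The step I expect to require the most care is the bookkeeping that identifies $A_{\hat a}$ as the top block of $A$ and confirms $|\Delta_{\hat a}(-1)|=|\det A_{\hat a}|$ — in particular, checking that the meridian column is identically zero in \emph{both} Alexander matrices (via Lemma \ref{lem4}(2), which applies verbatim to $r_1,\dots,r_{2g}$), so that each knot determinant is genuinely read off from the $x$-columns alone.
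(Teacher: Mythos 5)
Your proof is correct, but it reaches the inequality $\Delta(F)|_{-1}\le|\Delta_{\hat a}(-1)|$ by a genuinely different route from the paper. Both you and the paper get $|\Delta_{\hat a}(-1)|$ dividing $\Delta(F)|_{-1}$ from Proposition \ref{prop916-6}; for the reverse direction, however, the paper counts representations: by Theorem \ref{thm-Lin} and Theorem \ref{thm2-1} the two determinants encode the numbers of conjugacy classes of irreducible metabelian $SU(2)$-representations of $G=\pi_1(S^3\backslash\hat a)$ and $G'=\pi_1(S^4\backslash F)$, and since the presentation of Lemma \ref{lem-k} exhibits $G'$ as a quotient of the group with Lin's presentation (Lemma \ref{lem-Lin}), conjugacy classes of representations of $G'$ pull back injectively to those of $G$, giving $\Delta(F)|_{-1}\le|\Delta_{\hat a}(-1)|$. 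You instead argue at the level of elementary ideals: the Alexander matrix of Lin's presentation is exactly the top $2g$ rows of $A(t)$ — your bookkeeping checks out, since both composites $\psi\phi$ are ring homomorphisms on $\mathbb{Z}F(x_1,\dots,x_{2g},z)$ determined by $x_j\mapsto 1$, $z\mapsto t$, so Lemmas \ref{lem3} and \ref{lem4} do apply verbatim to $r_1,\dots,r_{2g}$ and both $z$-columns vanish identically — hence $\det A_{\hat a}$ is one of the $2g$-minors generating the $\mathbb{Z}$-ideal whose nonnegative generator is $\Delta(F)|_{-1}$, yielding the \emph{unconditional} divisibility $\Delta(F)|_{-1}\mid|\Delta_{\hat a}(-1)|$. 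Your argument is essentially a concrete instance of the general fact that adding relations adds rows to a presentation matrix and so enlarges the first elementary ideal, $\Delta(\hat a)\subseteq\Delta(F)$; what it buys is independence from Theorem \ref{thm2-1} and from Lin's theorem (so the proposition could be proved before the main theorem), plus a divisibility statement strictly stronger than the paper's inequality. What the paper's route buys is brevity given the machinery already established, and it displays the representation-theoretic meaning of the two determinants. Both arguments then conclude identically, using that $|\Delta_{\hat a}(-1)|$ is odd (hence nonzero) so the mutual divisibility of nonnegative integers forces $\Delta(F)|_{-1}=|\Delta_{\hat a}(-1)|$.
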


\begin{proof}
Assume that $\mathcal{S}_n(a,b)$ is $|\Delta_{\hat{a}}(-1)|$-colorable. 
Then, by Proposition \ref{prop916-6}, $\Delta(F)|_{-1}$ is divisible by $|\Delta_{\hat{a}}(-1)|$; hence, in particular, $|\Delta_{\hat{a}}(-1)| \leq \Delta(F)|_{-1}$. 
Now, by Theorems \ref{thm-Lin} and \ref{thm2-1}, $(|\Delta_{\hat{a}}(-1)|-1)/2$ and  $((\Delta(F)|_{-1})-1)/2$ are  the numbers of conjugacy classes of irreducible metabelian $SU(2)$-representations of $G=\pi_1(S^3 \backslash \hat{a})$ and $G'=\pi_1(S^4 \backslash F)$, respectively. By the presentations given in Lemmas \ref{lem-Lin} and \ref{lem-k}, we see that $G'$ is a quotient group of $G$, and hence the number of representations of $G'$ is smaller than or equal to that of $G$; hence $\Delta(F)|_{-1} \leq |\Delta_{\hat{a}}(-1)|$. 
Thus $\Delta_F|_{-1}=|\Delta_{\hat{a}}(-1)|$. 
\end{proof}
 
Using an argument similar to that in the proof of Proposition \ref{prop916-6}, we have the following proposition.  
\begin{proposition}\label{prop916-5}
Let $K$ and $F$ be a classical knot and a surface-knot, respectively. 
Let $p$ be an odd prime. 
If $K$ or $F$ is $p$-colorable and not $q$-colorable for any integer $q \neq p$ $(q>1)$, then $|\Delta_K(-1)|=|\mathrm{Col}_p(K)|/p$ or $\Delta(F)|_{-1}=|\mathrm{Col}_p(F)|/p$.  
\end{proposition}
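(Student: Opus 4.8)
The plan is to run the Smith-normal-form argument from the proof of Proposition \ref{prop916-6} and read off $|\mathrm{Col}_p(K)|$ directly from the diagonal entries. Concretely, I fix a diagram $D$, form the Alexander matrix $A(t)$ associated with $\Lambda$-colorings, and bring $A(-1)$ into the Smith normal form $B=P\cdot A(-1)\cdot Q$ of (\ref{eq13}), with positive integers $d_1\mid d_2\mid\cdots\mid d_{m-1}$ satisfying $d_1\cdots d_{m-1}=|\Delta_K(-1)|$, with $P,Q$ unimodular and $Q$ of the stated block shape. By Lemma \ref{lem-0613-5} the $p$-colorings of $D$ correspond to the solutions modulo $p$ of $A(-1)\mathbf{x}=\mathbf{o}$, equivalently, via $\mathbf{x}=Q\mathbf{y}$, to the solutions of $B\mathbf{y}=\mathbf{o}\pmod p$. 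Since $B$ is diagonal with entries $d_1,\ldots,d_{m-1},0,\ldots,0$, the equation $d_jy_j\equiv 0\pmod p$ has $\gcd(d_j,p)$ solutions for each $j\le m-1$ and the last variable is free, so
\[
|\mathrm{Col}_p(K)|=p\prod_{j=1}^{m-1}\gcd(d_j,p).
\]
Hence $|\mathrm{Col}_p(K)|/p=\prod_{j}\gcd(d_j,p)$, and this equals $\prod_{j}d_j=|\Delta_K(-1)|$ as soon as $d_j\mid p$, i.e. $d_j\in\{1,p\}$, for every $j$. The whole argument therefore reduces to showing that the hypothesis forces each $d_j\in\{1,p\}$.

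To control the $d_j$ I would prove a one-directional colorability lemma: \emph{for every integer $q>1$ dividing $d_{m-1}$, the knot $K$ is $q$-colorable.} Given such a $q$, the vector $\mathbf{y}=\mathbf{e}_{m-1}$ solves $B\mathbf{y}\equiv\mathbf{o}\pmod q$, since its only nonzero contribution is $d_{m-1}\equiv 0\pmod q$; hence $\mathbf{x}=Q\mathbf{e}_{m-1}$, the $(m-1)$st column of $Q$, solves $A(-1)\mathbf{x}\equiv\mathbf{o}\pmod q$ and represents a $q$-coloring. Because $Q$ is unimodular, the integer entries of this column have overall greatest common divisor $1$, and the last entry vanishes by the block shape of $Q$. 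Taking the resulting color $0$ of the $m$th arc as base point, the color differences include all the entries of $\mathbf{x}$, whose gcd is $1$; so by Lemma \ref{lem12} these colors generate $R_q$ and the $q$-coloring is non-degenerate. This is the step I expect to be the main obstacle, since it is exactly where non-degeneracy, as opposed to the mere existence of a coloring, must be extracted, and it hinges on combining the unimodularity of $Q$ with Lemma \ref{lem12}.

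With the lemma in hand the conclusion is immediate. Since $K$ is $p$-colorable, Proposition \ref{prop916-6} gives $p\mid|\Delta_K(-1)|=\prod_{j}d_j$, and as $p$ is prime and $d_j\mid d_{m-1}$ we obtain $p\mid d_{m-1}$, so $d_{m-1}>1$. By the lemma every divisor $q>1$ of $d_{m-1}$ makes $K$ a $q$-colorable knot; the hypothesis that $K$ is not $q$-colorable for any $q\neq p$ then forbids every such divisor except $p$, forcing $d_{m-1}=p$. Because $d_1\mid\cdots\mid d_{m-1}=p$, each $d_j\in\{1,p\}$, and the displayed count yields $|\Delta_K(-1)|=\prod_{j}d_j=\prod_{j}\gcd(d_j,p)=|\mathrm{Col}_p(K)|/p$. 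Finally, for a surface-knot $F$ the same argument applies verbatim: since $\Delta(F)|_{-1}$ is odd by \cite[Proposition 5.8]{Joseph}, $A(-1)$ has the same Smith normal form (\ref{eq13}) as in Proposition \ref{prop916-6}, and repeating the count together with the colorability lemma gives $\Delta(F)|_{-1}=|\mathrm{Col}_p(F)|/p$.
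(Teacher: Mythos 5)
Your proof is correct, and it takes a genuinely different route from the paper's. The paper never identifies the invariant factors: it fixes an arc, counts colorings normalized by $C(\alpha)=0$ (``condition O''), proves in Lemma \ref{lem10} that under the hypothesis $|\mathrm{Col}_r^0|\in\{1,\,|\mathrm{Col}_p^0|\}$ for every $r>1$ with $|\mathrm{Col}_p^0|=|\mathrm{Col}_p(K)|/p$, and then evaluates this count at the single modulus $r=|\Delta_K(-1)|$, where the Smith normal form (\ref{eq13}) gives exactly $d_1\cdots d_{m-1}=|\Delta_K(-1)|$ condition-O solutions; since this number exceeds $1$ by Proposition \ref{prop916-6}, it must equal $|\mathrm{Col}_p(K)|/p$. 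You instead pin down the $d_j$ themselves, and the step you flagged as the main obstacle does go through: the $(m-1)$st column of the unimodular matrix $Q$ has integer entries of gcd $1$ (cofactor expansion of $\det Q=\pm1$) and last entry $0$ by the block shape, it solves $A(-1)\mathbf{x}\equiv\mathbf{o}\pmod q$ for any divisor $q>1$ of $d_{m-1}$ because $A(-1)Q\mathbf{e}_{m-1}=P^{-1}B\mathbf{e}_{m-1}=d_{m-1}P^{-1}\mathbf{e}_{m-1}$, and since $0$ is itself one of the colors (so there is no need to invoke ``color differences''), Lemma \ref{lem12} applied over $\mathbb{Z}$ and pushed forward under the quandle homomorphism $\mathbb{Z}\to R_q$ gives non-degeneracy. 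Your lemma is precisely the converse of Proposition \ref{prop916-6} that the paper only gestures at in its remark on the Goeritz invariant. As for what each approach buys: your route yields more structure --- all invariant factors lie in $\{1,p\}$, and the fact that $|\mathrm{Col}_p(K)|$ is a power of $p$ falls out of the count $p\prod_j\gcd(d_j,p)$ rather than from Iwakiri's linear-space result cited in the proof of Lemma \ref{lem10} --- while the paper's route avoids constructing any explicit coloring and reuses Lemma \ref{lem10}, which it needs anyway for Corollary \ref{cor2-4}. Your treatment of the surface-knot case also matches the paper's: oddness of $\Delta(F)|_{-1}$ guarantees it is nonzero, so the form (\ref{eq13}) with full diagonal $d_1,\ldots,d_{m-1}$ persists and the argument runs verbatim.
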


\begin{proof}
We show the result for the case of $K$. The case of $F$ is the same. Let $D$ be a diagram. Assume that $D$ is $p$-colorable and not $q$-colorable for any integer $q \neq p$ ($q>1$). 
We consider the system of linear equations 
\begin{equation}\label{eq923-1}
A(-1) \cdot \mathbf{x}=\mathbf{o}, 
\end{equation}
where $A(t)$ is the Alexander matrix associated with $\Lambda$-colorings of $D$. 

Let $r>1$ be an integer.  
Since an $r$-coloring corresponds to a solution modulo $r$ of (\ref{eq923-1}), we count the number of solutions modulo $r$. 
We fix an arc $\alpha$ of $D$. 
In order to arrange that $r$-colorings contain exactly one trivial coloring, we consider $r$-colorings $C$ which satisfies the condition $C(\alpha)=0$, which will be called $r$-colorings {\it satisfying condition O}. 
An $r$-coloring satisfying condition O corresponds to a solution modulo $r$ of (\ref{eq923-1}) such that the last element $x_m$ of $\mathbf{x}=(x_1,\ldots, x_m)^T$ is determined by $0$, where $m$ is the number of columns of $A(-1)$, and we arrange arcs and take    $\alpha$ as the $m$th arc; 
we call the equation after fixing $x_m=0$ the equation {\it satisfying condition O}. 
We remark that the set of $r$-colorings satisfying condition O contain exactly one trivial coloring, whose colors are all $0 \in \mathbb{Z}/p\mathbb{Z}$, so the trivial coloring corresponds to the trivial solution of (\ref{eq923-1}) satisfying condition O. 
By Lemma \ref{lem10}, we see that for any integer $r>1$, the number of solutions mudulo $r$ of the equation (\ref{eq923-1}) satisfying condition O is $1$ or $|\mathrm{Col}_p(K)|/p$. 

Now, we calculate (\ref{eq923-1}), after fixing $x_m=0$. 
By the argument in the proof of Proposition \ref{prop916-6},  we see that the number of solutions modulo $r$ of (\ref{eq923-1}) satisfying condition O is the number of solutions modulo $r$ of 
\begin{equation}\label{eq-529}
\begin{pmatrix}
d_1 & 0 & \cdots & 0 \\
0 & d_2 & &\vdots \\
\vdots  &  & \ddots & \\

0 &  \cdots & 0 & d_{{m-1}}
\end{pmatrix} 
\mathbf{x}=\mathbf{o}, 
\end{equation}
where $d_1, \ldots, d_{m-1}$ are positive integers such that $d_1\cdots d_{m-1}=|\Delta_K(-1)|$. 
We denote the number of solutions in question by $N$. 
Take $r=|\Delta_K(-1)|$. Then, solutions of  (\ref{eq-529}) modulo $|\Delta_K(-1)|$ 
are given by 
\begin{equation*}
\begin{split}
\mathbf{x}=(n_1 d_2 d_3\cdots d_{m-1}, n_2 d_1 d_3 \cdots d_{m-1}, \ldots, n_{m-1} d_1d_2\cdots d_{m-2})^T \\
\pmod{|\Delta_K(-1)|},
\end{split} 
\end{equation*}
where $n_j=0,1,\ldots, d_j-1$ $(j=1, \ldots, m-1)$, and $\mathbf{x} \in (\mathbb{Z}/r \mathbb{Z})^{m-1}$ with $r=|\Delta_K(-1)|$. 
Hence we see that $N=d_1d_2 \cdots d_{m-1}$, which equals $|\Delta_K(-1)|$; thus, after fixing $x_m=0$, the equation (\ref{eq923-1}) has $|\Delta_K(-1)|$ solutions modulo $|\Delta_K(-1)|$. Since $K$ is $p$-colorable, by Proposition \ref{prop916-6}, $|\Delta_K(-1)|>1$. 
Therefore we see that $|\Delta_K(-1)|=|\mathrm{Col}_p(K)|/p$. 
\end{proof}

\begin{lemma}\label{lem10}
Let $K$ and $F$ be a classical knot and a surface-knot, respectively. 
We fix a diagram $D$ and an arc or sheet $\alpha$. 
We say that an $r$-coloring $C$ satisfies condition O, if $C$ satisfies $C(\alpha)=0$. 
For an integer $r>1$, we denote by $|\mathrm{Col}_r^0|$ the number of $r$-colorings satisfying condition O. 

Let $p$ be an odd prime. 
Assume that $K$ or $F$ is $p$-colorable. 
Then, we have the following. 

\begin{enumerate}[$(1)$]
\item 
We have $|\mathrm{Col}_p^0|=|\mathrm{Col}_p(K)|/p$ or $|\mathrm{Col}_p(F)|/p$, which is $p^k$ for some positive integer $k$. 

\item
The knot 
$K$ or $F$ is not $q$-colorable for every integer $q \neq p$ $(q>1)$ if and only if the set consisting of $|\mathrm{Col}_r^0|$ $(r>1)$ is 
$\{1, |\mathrm{Col}_p^0|\}$. 
\end{enumerate}
\end{lemma}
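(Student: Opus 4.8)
The plan is to reduce everything to the Smith normal form of $A(-1)$ already set up in the proof of Proposition \ref{prop916-6}, and then count solutions coordinate by coordinate. Writing $d_1 \mid d_2 \mid \cdots \mid d_{m-1}$ for the elementary divisors (with $d_1 \cdots d_{m-1}$ equal to the knot determinant), and taking $\alpha$ as the $m$-th arc/sheet so that condition O is $x_m=0$, the condition-O system becomes the diagonal system $\mathrm{diag}(d_1,\ldots,d_{m-1})\,\mathbf{x}=\mathbf{o}$. By Lemma \ref{lem-0613-5} together with this reduction, $|\mathrm{Col}_r^0|$ equals the number of its solutions modulo $r$, which factors as
\[
|\mathrm{Col}_r^0| = \prod_{j=1}^{m-1} \gcd(d_j, r),
\]
since $d_j x_j \equiv 0 \pmod r$ has exactly $\gcd(d_j,r)$ solutions in $\mathbb{Z}/r\mathbb{Z}$. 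This formula is the engine for both parts.

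For part $(1)$ I take $r=p$ prime: each factor $\gcd(d_j,p)$ is $1$ or $p$, so $|\mathrm{Col}_p^0|=p^k$ with $k=\#\{j : p\mid d_j\}$; since $K$ or $F$ is $p$-colorable, Proposition \ref{prop916-6} gives $p\mid d_{m-1}$, whence $k\geq 1$. The identity $|\mathrm{Col}_p^0|=|\mathrm{Col}_p(K)|/p$ (resp.\ $|\mathrm{Col}_p(F)|/p$) follows from the affine invariance of dihedral colorings: for $c\in\mathbb{Z}/p\mathbb{Z}$ the map $C\mapsto C+c$ sends $p$-colorings to $p$-colorings because $(x+c)*(y+c)=(x*y)+c$, and this gives a free $\mathbb{Z}/p\mathbb{Z}$-action whose orbits each meet the condition-O set exactly once; hence $|\mathrm{Col}_p(D)|=p\,|\mathrm{Col}_p^0|$.

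The heart of part $(2)$ is the characterization that $K$ or $F$ is $q$-colorable if and only if $q\mid d_{m-1}$. For necessity, the subgroup argument in the proof of Proposition \ref{prop916-6} shows that the colors of any solution lie in the subgroup of $\mathbb{Z}/q\mathbb{Z}$ generated by $q/\gcd(q,d_{m-1})$, which equals all of $\mathbb{Z}/q\mathbb{Z}$ only when $q\mid d_{m-1}$; since that subgroup is closed under $*$ by Lemma \ref{lem12}, a non-degenerate coloring forces $q\mid d_{m-1}$. For sufficiency, when $q\mid d_{m-1}$ I would exhibit a non-degenerate $q$-coloring by taking the solution whose transformed coordinates are $(0,\ldots,0,1)$; its actual colors form the last column of the unimodular matrix $Q'$, a primitive integer vector, so the gcd of the colors is coprime to $q$ and, by Lemma \ref{lem12}, the colors generate $R_q$.

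Granting this characterization, part $(2)$ is a finite case check. If $K$ or $F$ is not $q$-colorable for every $q\neq p$ with $q>1$, then the only divisor $>1$ of $d_{m-1}$ is $p$, forcing $d_{m-1}=p$; hence every $d_j\in\{1,p\}$ and the determinant is $p^k$. The displayed formula then gives $|\mathrm{Col}_r^0|=p^k$ when $p\mid r$ and $=1$ otherwise (e.g.\ $r=2$, as $p$ is odd), so $\{|\mathrm{Col}_r^0| : r>1\}=\{1,p^k\}=\{1,|\mathrm{Col}_p^0|\}$. Conversely, if $K$ or $F$ were $q_0$-colorable for some $q_0>1$ with $q_0\neq p$, then $q_0\mid d_{m-1}$; either some prime $\ell\neq p$ divides $d_{m-1}$, in which case $|\mathrm{Col}_\ell^0|$ is a power of $\ell$ exceeding $1$ and hence distinct from the $p$-power $|\mathrm{Col}_p^0|$, or $d_{m-1}=p^f$ with $f\geq 2$, in which case comparing exponents in the formula yields $|\mathrm{Col}_{p^2}^0|>|\mathrm{Col}_p^0|>1$. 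Either way a third value appears, contradicting $\{1,|\mathrm{Col}_p^0|\}$. I expect the main obstacle to be the $q$-colorability characterization, specifically its sufficiency direction, which needs the primitivity of a column of a unimodular matrix to upgrade a non-trivial coloring into a non-degenerate one.
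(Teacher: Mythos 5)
Your proposal is correct, and it reaches Lemma \ref{lem10} by a genuinely different route than the paper. The paper argues at the level of colorings: part $(1)$ is obtained by citing Iwakiri's result that $\mathrm{Col}_p(D)$ is a linear space over $\mathbb{Z}/p\mathbb{Z}$, and part $(2)$ combines Lemma \ref{lem920-1} (a non-trivial $r$-coloring induces a non-degenerate $d$-coloring for some divisor $d>1$ of $r$), Lemma \ref{lem920-5}, an explicit bijection $C=sC'$ between non-trivial $sp$-colorings and non-degenerate $p$-colorings satisfying condition O, and a separate argument excluding $d=2$ (a non-degenerate $2$-coloring would force at least two components). You instead run everything through the Smith normal form set up in the proof of Proposition \ref{prop916-6}, extracting the closed formula $|\mathrm{Col}_r^0|=\prod_{j=1}^{m-1}\gcd(d_j,r)$ and the characterization that $q$-colorability is equivalent to $q\mid d_{m-1}$, after which both parts reduce to elementary-divisor arithmetic. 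Your two nonstandard steps check out: the block form $Q=\left(\begin{smallmatrix} Q' & * \\ \mathbf{o} & 1\end{smallmatrix}\right)$ established in that proof makes the change of variables preserve condition O (the last coordinate is unchanged), which legitimizes the product formula for every $r>1$; and the sufficiency half of your characterization --- which the paper never states or needs --- is sound, since unimodularity of $Q$ forces the relevant column of $Q'$ to be primitive, and Lemma \ref{lem12}, applied over $\mathbb{Z}$ and reduced modulo $q$ (harmless, as $*$ commutes with reduction), converts gcd $1$ into generation of $R_q$. Two points to make explicit in a write-up: in the necessity direction you need a non-degenerate coloring satisfying condition O, which is exactly what Lemma \ref{lem920-5} supplies (the paper invokes it at the same spot in Proposition \ref{prop916-6}); and oddness of the determinant guarantees all $d_j\neq 0$, so the diagonal system genuinely has $m-1$ nonzero entries. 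What each approach buys: yours proves strictly more --- the exact value of $|\mathrm{Col}_r^0|$ for every $r$, a Goeritz-type colorability criterion anticipated by the paper's remark after Proposition \ref{prop916-6}, and it makes the $d=2$ case moot (both via $2\nmid d_{m-1}$ and via unique factorization in your Case A), while your affine action $C\mapsto C+c$ in part $(1)$ (the same translation trick the paper uses inside Lemma \ref{lem920-5}) removes the dependence on Iwakiri; the paper's route, by contrast, stays purely diagrammatic and avoids all the SNF bookkeeping that your argument must carry along.
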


We remark that Proposition \ref{prop916-5} is the combination of Lemma \ref{lem10} (1) and the following proposition. 

\begin{proposition}\label{prop-0611}
We use the notation in Lemma \ref{lem10}. 
Let $K$ and $F$ be a classical knot and a surface-knot, respectively. 
Let $p$ be an odd prime. 
If $K$ or $F$ is $p$-colorable and not $q$-colorable for any integer $q \neq p$ $(q>1)$, 
then $|\Delta_K(-1)|$ or $\Delta(F)|_{-1}$ equals $|\mathrm{Col}_p^0|$. 
\end{proposition}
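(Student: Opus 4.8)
The plan is to compute the number $|\mathrm{Col}_r^{0}|$ of $r$-colorings satisfying condition O at the single well-chosen modulus $r=|\Delta_K(-1)|$, and then to pin this number down as $|\mathrm{Col}_p^{0}|$ using the rigidity supplied by Lemma \ref{lem10}(2). I would reuse wholesale the Smith normal form reduction already set up in the proofs of Propositions \ref{prop916-6} and \ref{prop916-5}. Fix a diagram $D$ of $K$ and let the distinguished arc $\alpha$ be the $m$th arc, $m$ being the number of columns of the Alexander matrix $A(t)$ associated with $\Lambda$-colorings of $D$. By Lemma \ref{lem-0613-5}, the $r$-colorings satisfying condition O correspond bijectively to the solutions modulo $r$ of $A(-1)\cdot\mathbf{x}=\mathbf{o}$ with last coordinate $x_m=0$, so that $|\mathrm{Col}_r^{0}|$ is exactly the count of these solutions.

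First I would invoke the reduction from the proof of Proposition \ref{prop916-6}: because each row of $A(t)$ has entry-sum zero, the $m$th column of $A(-1)$ can be cleared, and integer elementary operations via the block-triangular unimodular $Q$ of (\ref{eq13}) carry the problem, compatibly with the constraint $x_m=0$, to the diagonal system (\ref{eq-529}) with entries $d_1,\dots,d_{m-1}$ satisfying $d_1\cdots d_{m-1}=|\Delta_K(-1)|$. I would then specialize to $r=|\Delta_K(-1)|$. Since every $d_j$ divides $r$, the congruence $d_j x_j\equiv 0\pmod r$ has exactly $\gcd(d_j,r)=d_j$ solutions, whence the system has $d_1\cdots d_{m-1}=|\Delta_K(-1)|$ solutions, i.e. $|\mathrm{Col}_{|\Delta_K(-1)|}^{0}|=|\Delta_K(-1)|$.

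To conclude, $p$-colorability of $K$ forces $|\Delta_K(-1)|>1$ by Proposition \ref{prop916-6}, so the number just computed exceeds $1$. The hypothesis that $K$ is not $q$-colorable for any integer $q\neq p$ $(q>1)$ lets me apply Lemma \ref{lem10}(2), which asserts that $\{\,|\mathrm{Col}_r^{0}|: r>1\,\}=\{1,\,|\mathrm{Col}_p^{0}|\}$. Since $|\mathrm{Col}_{|\Delta_K(-1)|}^{0}|=|\Delta_K(-1)|>1$ is not $1$, it must equal $|\mathrm{Col}_p^{0}|$, giving $|\Delta_K(-1)|=|\mathrm{Col}_p^{0}|$. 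The surface-knot case runs identically, the only extra input being that $\Delta(F)|_{-1}$ is odd by \cite[Proposition 5.8]{Joseph}, which guarantees that $A(-1)$ reduces to the same normal form shape (\ref{eq13}) with $d_1\cdots d_{m-1}=\Delta(F)|_{-1}$.

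The genuine obstacle is not the final count but the legitimacy of the reduction at a general modulus: one must check that clearing the $m$th column and diagonalizing are compatible with fixing $x_m=0$, which is precisely why $Q$ is taken block-triangular in (\ref{eq13}), so that solutions of the diagonal system with $x_m=0$ correspond to solutions of $A(-1)\cdot\mathbf{x}=\mathbf{o}$ with $x_m=0$. Once this correspondence and the oddness input in the surface case are secured, the specialization $r=|\Delta_K(-1)|$ together with Lemma \ref{lem10}(2) closes the argument.
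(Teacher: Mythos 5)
Your proposal is correct and matches the paper's own argument, which (as the remark before Proposition \ref{prop-0611} indicates) is exactly the computation embedded in the proof of Proposition \ref{prop916-5}: reduce to the diagonal system \eqref{eq-529} via the Smith normal form of Proposition \ref{prop916-6} with the block-triangular $Q$ preserving the constraint $x_m=0$, count $d_1\cdots d_{m-1}=|\Delta_K(-1)|$ solutions at the modulus $r=|\Delta_K(-1)|>1$, and invoke Lemma \ref{lem10}(2) to identify this count with $|\mathrm{Col}_p^0|$. Your handling of the surface-knot case via the oddness of $\Delta(F)|_{-1}$ from \cite[Proposition 5.8]{Joseph} is also the same as the paper's.
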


\begin{proof}[Proof of Lemma \ref{lem10}]
We show the result for the case of $K$. The case of $F$ is the same. Recall that $D$ is a diagram.  

We show (1). 
It is known by Iwakiri \cite{Iwakiri} that the set of $p$-colorings $\mathrm{Col}_p(D)$ is a linear space over $\mathbb{Z}/p\mathbb{Z}$. 
Further, since $D$ admits a non-trivial $p$-coloring, $|\mathrm{Col}_p(K)|>p$. Hence $|\mathrm{Col}_p(K)|=p^l$ for some integer $l$ with $l>1$. 
After fixing the color $C(\alpha)$, we have $|\mathrm{Col}_p(K)|/p$ $p$-colorings satisfying condition O. 
Hence, $|\mathrm{Col}_p^0|=|\mathrm{Col}_p(K)|/p=p^k$ for some positive integer $k$ $(k=l-1)$. 

Next we show (2). 
We show the only if part. 
Assume that $K$ is $p$-colorable and not $q$-colorable for every $q \neq p$ ($q>1$). 
Let $r$ be an integer $r>1$. 
We consider the case when $r \not\equiv 0 \pmod{p}$. 
If $D$ admits a non-trivial $r$-coloring, then, 
by Lemma \ref{lem920-1}, $D$ is $d$-colorable for some divisor $d>1$ of $r$. 
However, by assumption, 
$D$ is not $q$-colorable for any $q \neq p$ $(q>1)$, 
so $D$ is not $d$-colorable for any divisor $d>1$ of $r$ with $r \not\equiv 0 \pmod{p}$. 
Hence $D$ admits only trivial $r$-colorings for $r \not\equiv 0 \pmod{p}$; thus the number of $r$-colorings satisfying condition O is $1$: $|\mathrm{Col}_{r}^0|=1$ $(r \not\equiv 0 \pmod{p})$.  

We consider the case $r=sp$ for an integer $s>1$. 
By the argument in the proof of Lemma \ref{lem920-1}, 
if $D$ admits a non-trivial $sp$-coloring $C$ satisfying condition O, then colors of arcs generate a non-trivial subgroup of $\mathbb{Z}/sp\mathbb{Z}$, which induces a non-degenerate $q$-coloring $C'$ of $D$, satisfying condition O, where $q$ is the order of the subgroup, which is a divisor of $sp$ with $1<q \leq sp$. Further,  $C=(sp/q)C'$, which assigns to each arc $\alpha$ the color $(sp/q) \cdot C'(\alpha)$ $\pmod{sp}$. 
Since $D$ is $p$-colorable but not $d$-colorable for any divisor $d$ of $sp$ with $d \neq 1,p$, we see that $q=p$, and 
each non-trivial $sp$-coloring $C$ satisfying condition O 
is written as $C=sC'$, where $C'$ is 
a non-degenerate $p$-coloring satisfying condition O. 
Since $p$ is prime, by Lemma \ref{lem920-1} again, any non-trivial $p$-coloring satisfying condition O is a non-degenerate $p$-coloring satisfying condition O. Hence, the set of $sp$-colorings satisfying condition O and the set of $p$-colorings satisfying condition O have one-to-one correspondence, which implies $|\mathrm{Col}_{sp}^0|=|\mathrm{Col}_p^0|$ $(s\geq 1)$. 
Thus we have the only if part. 

Next we show the if part. 
Assume that $K$ is $p$-colorable, and, for any integer $r>1$, the number of $r$-colorings satisfying condition O is either $1$ or $|\mathrm{Col}_p^0|$.  

Let $r>1$ be an integer. 
Suppose that $K$ is $r$-colorable, that is, $D$ admits a non-degenerate $r$-coloring $C$.  Take any divisor $d$ of $r$ with $d>1$.  
Then, the non-degenerate $r$-coloring $C$ induces a non-degenerate $d$-coloring $C'$, given by $C'(\alpha)=C(\alpha) \pmod{d}$ for any arc $\alpha$. Thus, $K$ is $d$-colorable. We choose $d$ to be a prime. 
If $d$ is odd prime, then, by (1) of this lemma, the number $|\mathrm{Col}_d^0|$ of $d$-colorings satisfying condition O is $d^l$ for a positive integer $l$.  
By assumption, $|\mathrm{Col}_d^0|$ is either $1$ or $|\mathrm{Col}_p^0|$. Since $d^l>1$, we see that $d^l=|\mathrm{Col}_p^0|$. Since $|\mathrm{Col}_p^0|=p^k$ for some positive integer $k$ by (1) of this lemma, we see that the primes $d$ and $p$ coincide. 
If $d=2$, then, for a non-degenerate $2$-coloring $C$, there appear both colors $0$ and $1$, and all over-arcs (respectively, under-arcs) are colored by the same color. Thus $K$ has at least 2 components, which is a contradiction. Hence, the case $d = 2$ does not occur.  Thus, if $K$ is $r$-colorable, then $r$ is divisible by $p$; hence $K$ is not $r$-colorable for $r \not\equiv 0 \pmod{p}$. 

We consider $sp$-colorings of $D$, satisfying condition O, for an integer $s>1$. 
Let $H$ be the set of $sp$-colorings induced from the set of $p$-colorings satisfying condition O, such that each element of $H$ is given by $sC$ for each $p$-coloring $C$ satisfying condition O. 
Then, $H$ and 
the set of $p$-colorings satisfying condition O have one-to-one correspondence. Further, each $sp$-coloring in $H$ is degenerate.
% to a subset $H$ of the set of $sp$-colorings satisfying condition O, such that each element of $H$ is given by an $sp$-coloring $sC$ for each $p$-coloring $C$ satisfying condition O; further, each $sp$-coloring $sC$ is degenerate. 
If $K$ is $sp$-colorable, then $D$ admits a non-degenerate $sp$-coloring, which implies that $H$ is a proper subset of the set of $sp$-colorings satisfying condition O; hence $|\mathrm{Col}_{sp}^0|>|\mathrm{Col}_p^0|$, which contradicts the assumption.  
Thus, the assumption implies that $K$ is $p$-colorable but not $sp$-colorable $(s>1)$. Hence we see that the if part holds true. 
\end{proof}

\begin{proof}[Proof of Theorem \ref{thm2-2}]
The required result follows from Theorem \ref{thm2-1} and Proposition \ref{prop0614}. 
\end{proof}

\begin{proof}[Proof of Theorem \ref{thm2-3}]
The required result follows from Theorem \ref{thm2-1} and Proposition \ref{prop916-5}.
\end{proof}

\begin{proof}[Proof of Corollary \ref{cor2-4}]
We use the notation in Lemma \ref{lem10}. 
We use the same notation $c$, $\hat{c}$, or $F=\mathcal{S}_n(c,\tau^{lm})$ for its diagram, and for an integer $r>1$, we denote $|\mathrm{Col}_r^0|$ for $\hat{c}$ and $F$ by $|\mathrm{Col}_r^0(\hat{c})|$ and   
$|\mathrm{Col}_r^0(F)|$, respectively.
Remark that the knot $\hat{c}$ is the connected sum of $n-1$ copies of a $(2,p)$-torus knot. We can see the following property for $\hat{c}$ by seeing the Alexander matrix of a connected sum of $(2,p)$-torus knots, but here we will use the following method. 
We investigate what $r$-colorings satisfying condition O exist for $\hat{c}$ for each integer $r>1$. 
By fixing the color $0$ to the $n$th initial arc of $c$, and giving colors $x_1, \ldots, x_{n-1}$ to the other initial arcs of $c$, and seeing the condition so that colors of the terminal arcs agree with those of the initial arcs, we see the following.  
The knot $\hat{c}$ is $p$-colorable, and $p$-colorings are determined by colors of the initial arcs of $c$, which form a basis of the linear space $\mathrm{Col}_p(\hat{c})$ over $\mathbb{Z}/p\mathbb{Z}$, where $\mathrm{Col}_p(\hat{c})$ is the set consisting of $p$-colorings of $\hat{c}$. 
Hence, we see that $|\mathrm{Col}_p(\hat{c})|=p^n$ and 
$|\mathrm{Col}_p^0(\hat{c})|=p^{n-1}$. 
Further, for any integer $r>1$ with $r \not\equiv 0 \pmod{p}$, we see that the set of $r$-colorings satisfying condition O is the set consisting of exactly one element, which is the trivial $r$-coloring satisfying condition $O$. And for any integer $s>1$, we see that 
each $p$-coloring $C$ satisfying condition O induces an $sp$-coloring $sC$ satisfying condition O, and there are no other $sp$-colorings satisfying condition O. 

We investigate $r$-colorings of $F$, satisfying condition O, for each integer $r>1$. 
For $r \not\equiv 0 \pmod{p}$, since the set of $r$-colorings satisfying condition O for $\hat{c}$ is the set consisting only of the trivial $r$-coloring satisfying condition $O$, so is for $F$. Thus, $|\mathrm{Col}_r^0(F)|=1$ for any $r \not\equiv 0 \pmod{p}$. 

By \cite[Sections 5 and 6]{N5}, $p$-colorings of $F$ are induced from those of $\hat{c}$, and are determined from colors of the initial arcs of $c$. 
For $r=sp$ $(s\geq 1)$, since each $sp$-coloring $C'$ satisfying condition O for $\hat{c}$ is induced from a $p$-coloring $C$ satisfying condition O such that $C'=sC$, so is for $F$. 
Thus, $F$ is $p$-colorable and $|\mathrm{Col}_{p}(F)|=|\mathrm{Col}_p(\hat{c})|=p^n$, and $|\mathrm{Col}_{r}^0(F)|=|\mathrm{Col}_p^0(\hat{c})|(=p^{n-1})$ for any $r \equiv 0 \pmod{p}$. 

Hence, $F$ is $p$-colorable and the set consisting of $|\mathrm{Col}_r^0(F)|$ $(r>1)$ is $\{1, |\mathrm{Col}_p^0(F)|\}$, and by Lemma \ref{lem10} (2), $F$ is not $q$-colorable for every integer $q \neq p$ $(q>1)$. 
Thus, by Theorem \ref{thm2-3}, 
since $|\mathrm{Col}_{p}(F)|=p^n$, the number in question is $(p^{n-1}-1)/2$. 
\end{proof}

\section*{Acknowledgements}
The author was partially supported by JSPS KAKENHI Grant Number 19K03464. The author would like to thank the referee for kindly reviewing the manuscript.

\end{document}